\documentclass{imanum}
\usepackage{graphicx}
\newcommand{\rmnum}[1]{\romannumeral #1}

\jno{drnxxx}

\begin{document}

\title{A new class of complex nonsymmetric algebraic Riccati equations}

\author{%
{\sc
Liqiang Dong\thanks{Email: dongliqiang@stu.xjtu.edu.cn},
Jicheng Li\thanks{Corresponding author. Email: jcli@mail.xjtu.edu.cn}
and
Xuenian Liu\thanks{Email: lxn901018@163.com}} \\[2pt]
School  of Mathematics and Statistics,
Xi'an Jiaotong University,\\  Xi'an, 710049, People's Republic of China
}

\shortauthorlist{Liqiang Dong \emph{et al.}}

\maketitle

\begin{abstract}
{In this paper, we first propose a new parameterized definition of comparison matrix of a given complex matrix, which generalizes the definition proposed by \cite {Axe1}. Based on this, we propose a new class of complex nonsymmetric algebraic Riccati equations (NAREs) which extends the class of nonsymmetric algebraic Riccati equations proposed by \cite {Axe1}. We also generalize the definition of the extremal solution of an NARE and show that the extremal solution of an NARE exists and is unique. Some classical algorithms can be applied to search for the extremal solution of an NARE, including Newton's method, some fixed-point iterative methods and doubling algorithms. Besides, we show that Newton's method is quadratically convergent and the fixed-point iterative method is linearly convergent. We also give some concrete strategies for choosing suitable parameters such that the doubling algorithms can be used to deliver the extremal solutions, and show that the two doubling algorithms with suitable parameters are quadratically convergent. Numerical experiments show that our strategies for parameters are effective.}
{parameterized comparison matrix; nonsymmetric algebraic Riccati equations (NAREs); the extremal solution; doubling algorithms; parameter selection strategy.}
\end{abstract}

\section{Introduction}
\label{sec;introduction}

A complex nonsymmetric algebraic Riccati equation (NARE) has the following form
\begin{eqnarray}\label{e1}
   XCX-XD-AX+B=0,
\end{eqnarray}
where $A\in \mathbb{C}^{m\times m}, B\in \mathbb{C}^{m\times n}, C\in \mathbb{C}^{n\times m}$ and $D\in \mathbb{C}^{n\times n}$ are known matrices and $X\in \mathbb{C}^{m\times n}$ is an unknown matrix. Let
\begin{eqnarray}\label{e2}
H=
\left(
\begin{matrix}
D&-C\\
B&-A
\end{matrix}
\right),~~~Q=JH=\left(
\begin{matrix}
D&-C\\
-B&A
\end{matrix}
\right),
\end{eqnarray}
where
$J=
\left(
\begin{matrix}
I_n&0\\
0&-I_m
\end{matrix}
\right)
$.
The complementary (dual) algebraic Riccati equation of the NARE (\ref{e1}) will be
\begin{eqnarray}\label{e3}
YBY-YA-DY+C=0,
\end{eqnarray}
which will be abbreviated as cARE, where $Y\in \mathbb{C}^{n\times m}$ is the unknown matrix.

In the following statements, some notations are necessary. $\textbf{1}$ denotes a column vector whose elements are all equal to one and size is implied by the context. $[A]_{ij}$ denotes the $(i,j)$ element of the matrix $A$.  The inequality $A\geq B$ ($A>B$) means that $[A]_{ij}\geq [B]_{ij}$ ($[A]_{ij}> [B]_{ij}$) for all $i,j$. For a square matrix $A$, we use $\rho(A)$ to denote its spectral radius and use ${\rm diag}(A)$ and ${\rm offdiag}(A)$ to denote its diagonal parts and off-diagonal parts, respectively. A real square matrix $A$ is called a nonsingular M-matrix if all of its off-diagonal elements are nonpositive and $Au>0$ for some positive vector $u$. Let ${\rm j}$ denote the imaginary unit. $|z|$, ${\rm Re}(z)$ and ${\rm Im}(z)$ denote the module, the real part and the imaginary part of the complex number $z$, respectively. For a complex matrix $A\in \mathbb{C}^{m\times n}$, $|A|$ is defined by $[|A|]_{ij}=|[A]_{ij}|$ for all $i,j$. Let $$z_\omega=-(1-\omega)+\omega {\rm j},~~~z_{\omega}^\bot=\omega+{\rm j}(1-\omega),$$
where $\omega$ is a real in the interval $[0,1]$. Then  $$L(z_\omega,\kappa)=lz_\omega+\frac{\kappa}{1-\omega}{\rm j},$$ where $\kappa$ is a given real number and $l$ takes all real numbers, represents a straight line passing through $\frac{\kappa}{1-\omega}{\rm j}$; and $R(z_{\omega}^\bot)= rz_{\omega}^\bot$, where $r$ takes all positive real numbers, represents a ray orthogonal to $L(z_\omega,\kappa)$. In particular, $L(z_\omega,0)=lz_\omega$, where $l$ takes all real numbers, represents a straight line passing through the origin. Obviously, $L(z_\omega,0)$ is parallel to $L(z_\omega,\kappa)$ for any nonzero real number $\kappa$.

In the literature, many researchers have studied the NARE (\ref{e1}). Some definitions  about comparison matrix of a given matrix are introduced as auxiliary tools. Definition \ref{thm39} below has been given by \cite {Axe1}. Definition \ref{thm31} below is usual definition of comparison matrix. Definition \ref{thm32} below is a generalization of Definition \ref{thm39}.

\begin{definition}{\rm (\cite {Axe1}).}\label{thm39}
The first comparison matrix of the complex square matrix $A$, will be denoted by $\widehat{A}$, is defined by
\begin{eqnarray}\label{e4}
[\widehat{A}]_{ij}=
\begin{cases}
{\rm Re}([A]_{ij}),&i=j,\\
-|[A]_{ij}|,&i\neq j.
\end{cases}
\end{eqnarray}
\end{definition}

\begin{definition}\label{thm31}
The second comparison matrix of the complex square matrix $A$, will be denoted by $\overline{A}$, is defined by
\begin{eqnarray}\label{e5}
[\overline{A}]_{ij}=
\begin{cases}
|[A]_{ij}|,&i=j,\\
-|[A]_{ij}|,&i\neq j.
\end{cases}
\end{eqnarray}
\end{definition}

\begin{definition}\label{thm32}
The $\omega$-comparison matrix of the complex square matrix $A$, will be denoted by $A_{\omega}$, is defined by
  \begin{eqnarray}\label{e6}
    [A_{\omega}]_{ij}=
    \left\{
    \begin{array}{lc}
    \omega{\rm Re}([A]_{ij})+(1-\omega){\rm Im}([A]_{ij}),&i=j,\\
    -|[A]_{ij}|,&i\neq j,
    \end{array}
    \right.
  \end{eqnarray}
where $\omega$ is a given real number in the interval $[0,1]$.
\end{definition}

Next, we give some simple terminologies which have been introduced in the literature. We call the NARE (\ref{e1}) is an M-matrix algebraic Riccati equation (MARE) if the matrix $Q$ defined by (\ref{e2}) is a nonsingular M-matrix or an irreducible singular M-matrix \cite[see.][]{Axe7}.  We call the NARE (\ref{e1}) is in class $\mathbb{M}$ if the matrix $Q$ is simply a nonsingular M-matrix \cite[see.][]{Axe3}. We call the NARE (\ref{e1}) is in class $\mathbb{H}^+$ if the second comparison matrix $\overline{Q}$ of the matrix $Q$ is a nonsingular M-matrix and the diagonal elements of the matrix $Q$ are positive. We call the NARE (\ref{e1}) is in class $\mathbb{H}^-$ if the second comparison matrix $\overline{Q}$ of the matrix $Q$ is a nonsingular M-matrix and the diagonal elements of the matrix $Q$ are negative. In fact, the NARE in class $\mathbb{H}^+$ and the NARE in class $\mathbb{H}^-$ can be transformed into each other \cite[see.][]{Axe2}. We call the NARE (\ref{e1}) is in class $\mathbb{H}^*$ if the first comparison matrix $\widehat{Q}$ of the matrix $Q$ is a nonsingular M-matrix.  We call the NARE (\ref{e1}) is in class $\mathbb{H}^\omega$ if the $\omega$-comparison matrix $Q_{\omega}$ of the matrix $Q$ is a nonsingular M-matrix. We call the NARE (\ref{e1}) is in class $\mathbb{H}$ if the second comparison matrix $\overline{Q}$ of the matrix $Q$ is a nonsingular M-matrix, i.e., $Q$ is an H-matrix. We observe that class $\mathbb{H}^\omega$ is a subclass of the class $\mathbb{H}$.

At present, many researchers have been studying NAREs. MAREs have been studied by many researchers, such as, \cite {Axe3} and so on. As is known to us all, an MARE has a unique minimal nonnegative solution and the solution can be obtained by many classical methods, mainly including the fixed-point iterative methods, Newton's method and doubling algorithms (SDA by \cite {Axe14}, SDA-ss by \cite {Axe12}, ADDA by \cite {Axe9}). The NAREs in class $\mathbb{H}^+$($\mathbb{H}^-$) have been studied by \cite {Axe2} . The NAREs in class $\mathbb{H}^*$ have been studied by \cite {Axe1} and \cite {Axe5}. The extremal solution of the NARE in class $\mathbb{H}^*$ exists and is unique and can also be obtained by the existing classical algorithms. We can see that the existing methods for the NAREs in class $\mathbb{M}$ can be applied to solve NAREs in class $\mathbb{H}^+$ ($\mathbb{H}^-$), in class $\mathbb{H}^*$. We can observe that Definition \ref{thm32} is reduced to Definition \ref{thm39} and $\mathbb{H}^\omega$ is reduced to $\mathbb{H}^*$ when $\omega=1$. The NAREs in class $\mathbb{H}^\omega$ for any $\omega \in [0,1]$ hasn't been studied except $\omega=1$. So, we will consider how to solve the NAREs in class $\mathbb{H}^\omega$ for general $\omega \in[0,1]$ in this paper.

The remainder of this paper is organized as follows. Section 2 gives some preliminaries which will be helpful for subsequent arguments. A generalized definition of the extremal solution of the NARE (\ref{e1}) in class $\mathbb{H}^\omega$ is given and the existence and uniqueness of the extremal solution of an NARE (\ref{e1}) in class $\mathbb{H}^\omega$ is proved in Section 3. In Section 4 and 5, Newton's method and fixed-point iterative methods are applied to deliver the extremal solution of the NARE (\ref{e1}) in class $\mathbb{H}^\omega$, respectively. In Section 6, two existing doubling algorithms, including ADDA and SDA, are applied to solve the NARE (\ref{e1}) in $\mathbb{H}^\omega$ by choosing suitable parameters. In Section 7, numerical experiments show that our algorithms are effective. Some concluding remarks are introduced to attract more valuable researches in Section 8.

\section{Preliminary knowledge}

\begin{lemma}\label{thm1}
Let $A\in \mathbb{R}^{n\times n}$ be a nonsingular M-matrix.

1. If $B\geq A$ is a $Z$-matrix, then $B$ is a nonsingular M-matrix with $B^{-1}\leq A^{-1}$.

2. Let $A$ be partitioned as
\begin{eqnarray*}
A=\left(
\begin{matrix}
A_{11}&A_{12}\\
A_{21}&A_{22}
\end{matrix}
\right),
\end{eqnarray*}
where $A_{11}$ and $A_{22}$ are square matrices, then $A_{11}$, $A_{22}$ and their Schur complements
$$A_{22}-A_{21}A_{11}^{-1}A_{12},~~~A_{11}-A_{12}A_{22}^{-1}A_{21},$$
are nonsingular M-matrices. In addition, if $A\textbf{1}>0$, then $A_{11}\textbf{1}>0$, $A_{22}\textbf{1}>0$, and
$$(A_{22}-A_{21}A_{11}^{-1}A_{12})\textbf{1}>0,~~~(A_{11}-A_{12}A_{22}^{-1}A_{21})\textbf{1}>0.$$
\end{lemma}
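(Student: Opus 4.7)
The plan is to leverage two equivalent characterizations of nonsingular M-matrices: $A$ is a nonsingular M-matrix if and only if $A$ is a Z-matrix and there exists $u>0$ with $Au>0$, which is in turn equivalent to $A$ being a Z-matrix with $A^{-1}\geq 0$. Both parts of the lemma will be reduced to verifying these conditions on the relevant matrices.

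For Part 1, I would pick $u>0$ with $Au>0$. Because $B\geq A$ and $u>0$, I get $Bu\geq Au>0$; combined with the Z-matrix hypothesis on $B$, this already yields that $B$ is a nonsingular M-matrix. For the inequality $B^{-1}\leq A^{-1}$, I would use the identity
\begin{equation*}
B^{-1}-A^{-1}=B^{-1}(A-B)A^{-1},
\end{equation*}
and observe that $A-B\leq 0$ while $A^{-1}\geq 0$ and $B^{-1}\geq 0$ by the characterization, so the product is entrywise nonpositive.

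For Part 2, I would first note that $A_{11}$ and $A_{22}$ inherit the Z-matrix property as principal submatrices. To promote them to nonsingular M-matrices, I would take $u=(u_1^\top,u_2^\top)^\top>0$ with $Au>0$ and read off the block equations
\begin{equation*}
A_{11}u_1+A_{12}u_2>0,\qquad A_{21}u_1+A_{22}u_2>0.
\end{equation*}
Since $A_{12}\leq 0$ and $u_2>0$, the first inequality gives $A_{11}u_1>-A_{12}u_2\geq 0$, so $A_{11}u_1>0$; the same argument for $A_{22}u_2$ handles the other diagonal block. For the Schur complements, I would use the block LDU factorization of $A$ to identify $(A_{22}-A_{21}A_{11}^{-1}A_{12})^{-1}$ with the $(2,2)$ block of $A^{-1}$, which is nonnegative; the Z-matrix property of the Schur complement follows because $A_{21}\leq 0$, $A_{11}^{-1}\geq 0$, and $A_{12}\leq 0$ force $A_{21}A_{11}^{-1}A_{12}\geq 0$. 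The same argument with the roles of the blocks swapped takes care of $A_{11}-A_{12}A_{22}^{-1}A_{21}$.

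The part I expect to be most delicate is the strict positivity claims in the case $A\mathbf{1}>0$. For $A_{11}\mathbf{1}>0$ and $A_{22}\mathbf{1}>0$, the block inequalities above applied with $u_1=u_2=\mathbf{1}$ do the job immediately. For the Schur complement, I would introduce the auxiliary vector $w_1=\mathbf{1}+A_{11}^{-1}A_{12}\mathbf{1}$; the identity $A_{11}w_1=A_{11}\mathbf{1}+A_{12}\mathbf{1}>0$ and the fact that $A_{11}^{-1}\geq 0$ has no zero row (being invertible) will promote $w_1\geq 0$ to $w_1>0$. Substituting $\mathbf{1}=w_1-A_{11}^{-1}A_{12}\mathbf{1}$ into the second block equation rewrites it as
\begin{equation*}
A_{21}w_1+(A_{22}-A_{21}A_{11}^{-1}A_{12})\mathbf{1}>0,
\end{equation*}
and since $A_{21}w_1\leq 0$, the Schur complement applied to $\mathbf{1}$ must itself be strictly positive. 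The symmetric construction handles the other Schur complement.
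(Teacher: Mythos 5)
Your proof is correct. Note that the paper itself states this lemma without proof, treating it as standard M-matrix theory (essentially the material in Berman \& Plemmons), and your argument is exactly the standard one: the positive-vector characterization for part 1 together with the identity $B^{-1}-A^{-1}=B^{-1}(A-B)A^{-1}$, and for part 2 the block inequalities, the identification of the Schur complement's inverse with a block of $A^{-1}\geq 0$, and the auxiliary vector $w_1=\mathbf{1}+A_{11}^{-1}A_{12}\mathbf{1}$ (whose strict positivity indeed follows since $A_{11}w_1=A_{11}\mathbf{1}+A_{12}\mathbf{1}>0$ and $A_{11}^{-1}\geq 0$ has no zero row). All steps check out, so your proposal supplies a complete proof of a result the paper leaves unproved.
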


\begin{lemma}{\rm(\cite [see][]{Axe1})}.\label{thm2}
Let $A\in \mathbb{R}^{n\times n}$ be a nonsingular M-matrix. If $B\in \mathbb{C}^{n\times n}$ satisfies
that
  \begin{eqnarray*}
     |{\rm diag}(B)|\geq{\rm diag}(A),~~~|{\rm offdiag}(B)|\leq |{\rm offdiag}(A)|,
  \end{eqnarray*}
then $B$ is nonsingular with $|B^{-1}|\leq A^{-1}$.
\end{lemma}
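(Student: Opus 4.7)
The plan is to split both matrices into their diagonal and off-diagonal parts and compare Neumann series term-by-term. Write $A=D_A+E_A$ and $B=D_B+E_B$, where $D_A,D_B$ are the diagonal parts. Because $A$ is a nonsingular M-matrix, one has $D_A>0$, $E_A\le 0$, and the classical splitting characterization gives $\rho(-D_A^{-1}E_A)<1$, with the Neumann expansion $A^{-1}=\sum_{k=0}^{\infty}(-D_A^{-1}E_A)^{k}D_A^{-1}$, all of whose terms are entrywise nonnegative.

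Next I would use the two hypotheses to transfer this structure to $B$. The condition $|\mathrm{diag}(B)|\ge\mathrm{diag}(A)=D_A>0$ makes $D_B$ invertible with $|D_B^{-1}|=|D_B|^{-1}\le D_A^{-1}$. Combined with $|E_B|\le|E_A|=-E_A$, this gives the key entrywise inequality
\begin{equation*}
|D_B^{-1}E_B|\;\le\;|D_B|^{-1}|E_B|\;\le\;D_A^{-1}(-E_A)\;=\;-D_A^{-1}E_A.
\end{equation*}
Since spectral radius is monotone with respect to entrywise modulus-domination by a nonnegative matrix, $\rho(D_B^{-1}E_B)\le\rho(|D_B^{-1}E_B|)\le\rho(-D_A^{-1}E_A)<1$. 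Therefore $I+D_B^{-1}E_B$ is invertible, and hence so is $B=D_B(I+D_B^{-1}E_B)$, establishing nonsingularity.

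For the inequality $|B^{-1}|\le A^{-1}$, I would expand $B^{-1}$ via the Neumann series and apply the triangle inequality term-by-term:
\begin{equation*}
|B^{-1}|\;=\;\Bigl|\sum_{k=0}^{\infty}(-D_B^{-1}E_B)^{k}D_B^{-1}\Bigr|\;\le\;\sum_{k=0}^{\infty}|D_B^{-1}E_B|^{k}\,|D_B^{-1}|\;\le\;\sum_{k=0}^{\infty}(-D_A^{-1}E_A)^{k}D_A^{-1}\;=\;A^{-1},
\end{equation*}
where the last inner inequality uses both $|D_B^{-1}E_B|\le -D_A^{-1}E_A$ and $|D_B^{-1}|\le D_A^{-1}$, combined with the nonnegativity of $-D_A^{-1}E_A$ and $D_A^{-1}$.

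The only delicate point is the convergence/absolute-convergence of the two Neumann series needed to justify term-by-term comparison; this follows at once from $\rho(D_B^{-1}E_B)<1$ for the $B$-series and from $\rho(-D_A^{-1}E_A)<1$ for the dominating $A$-series, so there is no real obstacle. Everything else is the standard Perron-type monotonicity fact $|X|\le Y\Rightarrow\rho(X)\le\rho(Y)$ for $Y\ge0$, which I would invoke without proof.
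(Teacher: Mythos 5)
Your proof is correct. Note that the paper itself offers no argument for this lemma at all: it is imported by citation from Liu \& Xue (2012), so there is no in-paper proof to match against. Your Jacobi-splitting/Neumann-series argument is a sound, self-contained derivation: the facts you use (positive diagonal of a nonsingular M-matrix, $\rho(-D_A^{-1}E_A)<1$ for the Jacobi splitting, $|X|\le Y\Rightarrow\rho(X)\le\rho(|X|)\le\rho(Y)$ for $Y\ge0$, and term-by-term domination of the two nonnegative series) are all standard, and the entrywise bounds $|D_B^{-1}|\le D_A^{-1}$ and $|D_B^{-1}E_B|\le -D_A^{-1}E_A$ are exactly what is needed to compare the series. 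The route usually taken in the cited source, and the one most consonant with this paper's toolkit, is slightly different in packaging: form the comparison matrix $\overline{B}$ (diagonal $|[B]_{ii}|$, off-diagonal $-|[B]_{ij}|$), observe from the hypotheses that $\overline{B}\ge A$ is a Z-matrix, so by part 1 of Lemma 2.1 it is a nonsingular M-matrix with $\overline{B}^{-1}\le A^{-1}$, and then invoke the classical H-matrix (Ostrowski) bound $|B^{-1}|\le\overline{B}^{-1}$. That version buys brevity and reuse of Lemma 2.1, whereas your version is more elementary and effectively reproves the Ostrowski step from scratch; the two are mathematically equivalent, since your Neumann comparison applied with $A=\overline{B}$ is precisely the standard proof of $|B^{-1}|\le\overline{B}^{-1}$.
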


\begin{lemma}\label{thm3}
Let $B\in \mathbb{C}^{n\times n}$ and suppose that $B_{\omega}$ is a nonsingular M-matrix. Then $B$ is nonsingular with
   \begin{eqnarray*}
     |B^{-1}|\leq (B_{\omega})^{-1}.
   \end{eqnarray*}
\end{lemma}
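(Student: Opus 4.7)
The plan is to reduce Lemma \ref{thm3} to Lemma \ref{thm2} by taking $A = B_{\omega}$ and verifying the two hypotheses of Lemma \ref{thm2}. The off-diagonal comparison is immediate from Definition \ref{thm32}: for $i \neq j$ we have $|[B_{\omega}]_{ij}| = \bigl|-|[B]_{ij}|\bigr| = |[B]_{ij}|$, so $|\mathrm{offdiag}(B)| = |\mathrm{offdiag}(B_{\omega})|$, which trivially satisfies the required inequality.

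The substantive step is the diagonal comparison, i.e., showing $|[B]_{ii}| \geq [B_{\omega}]_{ii} = \omega\,\mathrm{Re}([B]_{ii}) + (1-\omega)\,\mathrm{Im}([B]_{ii})$ for every $i$. Writing $[B]_{ii} = a + b\,{\rm j}$ with $a = \mathrm{Re}([B]_{ii})$ and $b = \mathrm{Im}([B]_{ii})$, the claim becomes $\sqrt{a^{2}+b^{2}} \geq \omega a + (1-\omega) b$. I would obtain this from the Cauchy--Schwarz inequality
\begin{equation*}
\omega a + (1-\omega) b \;\leq\; \sqrt{\omega^{2}+(1-\omega)^{2}} \cdot \sqrt{a^{2}+b^{2}},
\end{equation*}
combined with the elementary bound $\omega^{2}+(1-\omega)^{2} = 1 - 2\omega(1-\omega) \leq 1$ for $\omega \in [0,1]$. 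This yields exactly $\omega a + (1-\omega) b \leq \sqrt{a^{2}+b^{2}} = |[B]_{ii}|$, as needed.

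With both hypotheses of Lemma \ref{thm2} verified (using the nonsingular M-matrix $B_{\omega}$ in place of $A$), the conclusion is immediate: $B$ is nonsingular and $|B^{-1}| \leq (B_{\omega})^{-1}$. The main (and only non-mechanical) obstacle is the diagonal inequality, which is really a one-line Cauchy--Schwarz estimate; the rest is bookkeeping against Definition \ref{thm32} and Lemma \ref{thm2}. I would also briefly note for the reader that $[B_{\omega}]_{ii} > 0$ is forced by $B_{\omega}$ being a nonsingular M-matrix, which guarantees that Lemma \ref{thm2}'s hypothesis $\mathrm{diag}(B_{\omega}) > 0$ is met.
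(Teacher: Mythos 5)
Your proposal is correct and follows essentially the same route as the paper's own proof: reduce to Lemma \ref{thm2} with $A=B_{\omega}$, note $|\mathrm{offdiag}(B)|=|\mathrm{offdiag}(B_{\omega})|$, and establish the diagonal bound $|[B]_{ii}|\geq \omega\,\mathrm{Re}([B]_{ii})+(1-\omega)\,\mathrm{Im}([B]_{ii})$ via the Cauchy--Schwarz estimate together with $\omega^{2}+(1-\omega)^{2}\leq 1$, which is exactly the chain of inequalities in the paper. No gaps; the argument is complete as stated.
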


\begin{proof}
Because
\begin{eqnarray*}
\begin{aligned}
|{\rm diag}(B)|&=\sqrt{({\rm Re}({\rm diag}(B)))^2+({\rm Im}({\rm diag}(B)))^2}\\
&\geq\sqrt{\omega^2+(1-\omega)^2}\sqrt{({\rm Re}({\rm diag}(B)))^2+({\rm Im}({\rm diag}(B)))^2}\\
&\geq \omega {\rm Re}({\rm diag}(B))+(1-\omega){\rm Im}({\rm diag}(B))\\
&={\rm diag}(B_{\omega})
\end{aligned}
\end{eqnarray*}
and $$|{\rm offdiag}(B)|=|{\rm offdiag}(B_{\omega})|,$$
by Lemma \ref{thm2}, we have that $B$ is a nonsingular matrix with
\begin{eqnarray*}
 |B^{-1}|\leq (B_{\omega})^{-1}.
\end{eqnarray*}
\end{proof}

\begin{lemma}\label{thm4}
Let $B\in \mathbb{C}^{n\times n}$ and suppose that $B_{\omega}$ is a nonsingular M-matrix. Let $B$ be partitioned as
\begin{eqnarray*}
B=\left(
\begin{matrix}
B_{11}&B_{12}\\
B_{21}&B_{22}
\end{matrix}
\right),
\end{eqnarray*}
where $B_{11}$ and $B_{22}$ are square matrices. Let $S_{11}=B_{11}-B_{12}B_{22}^{-1}B_{21}$ and $S_{22}=B_{22}-B_{21}B_{11}^{-1}B_{12}$ be the Schur complement of $B_{22}$ and $B_{11}$, respectively. Then, $(B_{11})_{\omega}$, $(B_{22})_{\omega}$, $(S_{11})_{\omega}$ and $(S_{22})_{\omega}$ are nonsingular M-matrices. Moreover, if $B_{\omega} \textbf{1}>0$, then
$$(B_{11})_{\omega}\textbf{1}>0,~~(B_{22})_{\omega}\textbf{1}>0,~~(S_{11})_{\omega}\textbf{1}>0,~~(S_{22})_{\omega}\textbf{1}>0.$$
\end{lemma}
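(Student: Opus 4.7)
The plan is to reduce everything to Lemma \ref{thm1} applied to the already-known M-matrix $B_\omega$, via a careful entry-wise comparison between $(S_{11})_\omega$ (resp.\ $(S_{22})_\omega$) and the Schur complement of $B_\omega$ itself. First, I would partition $B_\omega$ conformally with $B$ and note that, by Definition \ref{thm32}, the off-diagonal blocks of $B_\omega$ are exactly $-|B_{12}|$ and $-|B_{21}|$, while the diagonal blocks are $(B_{11})_\omega$ and $(B_{22})_\omega$. Applying Lemma \ref{thm1}(2) directly to $B_\omega$ then shows at once that $(B_{11})_\omega$, $(B_{22})_\omega$, and the two Schur complements
\begin{eqnarray*}
T_{11} = (B_{11})_\omega - |B_{12}|\,((B_{22})_\omega)^{-1}\,|B_{21}|, \quad
T_{22} = (B_{22})_\omega - |B_{21}|\,((B_{11})_\omega)^{-1}\,|B_{12}|,
\end{eqnarray*}
are nonsingular M-matrices, and moreover that $T_{11}\mathbf{1}>0$, $T_{22}\mathbf{1}>0$ whenever $B_\omega\mathbf{1}>0$.

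The heart of the argument is to prove $(S_{11})_\omega \geq T_{11}$ entry-wise (and analogously for $S_{22}$). For the off-diagonal entries, $[(S_{11})_\omega]_{ij}=-|[S_{11}]_{ij}|$, and the triangle inequality combined with $|B_{22}^{-1}|\leq ((B_{22})_\omega)^{-1}$ (Lemma \ref{thm3}, available because $(B_{22})_\omega$ is a nonsingular M-matrix from the previous paragraph) yields
\begin{eqnarray*}
|[S_{11}]_{ij}| \;\leq\; |[B_{11}]_{ij}| + [|B_{12}|\,((B_{22})_\omega)^{-1}\,|B_{21}|]_{ij},
\end{eqnarray*}
which rearranges to $[(S_{11})_\omega]_{ij}\geq [T_{11}]_{ij}$. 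For the diagonal entries the key step, which I expect to be the main technical obstacle, is the inequality
\begin{eqnarray*}
\omega\,\mathrm{Re}(z) + (1-\omega)\,\mathrm{Im}(z) \;\leq\; \sqrt{\omega^2+(1-\omega)^2}\,|z| \;\leq\; |z|
\end{eqnarray*}
for any complex $z$ and $\omega\in[0,1]$ (by Cauchy--Schwarz, exactly mirroring the estimate used in the proof of Lemma \ref{thm3}). Applying this with $z=[B_{12}B_{22}^{-1}B_{21}]_{ii}$ and again invoking $|B_{22}^{-1}|\leq ((B_{22})_\omega)^{-1}$ to pass from $|z|$ to $[|B_{12}|((B_{22})_\omega)^{-1}|B_{21}|]_{ii}$, I obtain $[(S_{11})_\omega]_{ii}\geq [T_{11}]_{ii}$.

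Having established $(S_{11})_\omega \geq T_{11}$ entry-wise, I observe that $(S_{11})_\omega$ is a Z-matrix by construction, so Lemma \ref{thm1}(1) applied to the nonsingular M-matrix $T_{11}$ forces $(S_{11})_\omega$ to be a nonsingular M-matrix as well; the argument for $(S_{22})_\omega$ is symmetric. Finally, under the additional hypothesis $B_\omega\mathbf{1}>0$, Lemma \ref{thm1}(2) already gives $(B_{11})_\omega\mathbf{1}>0$, $(B_{22})_\omega\mathbf{1}>0$, and $T_{11}\mathbf{1}>0$, $T_{22}\mathbf{1}>0$; since $(S_{11})_\omega-T_{11}\geq 0$ entry-wise and $\mathbf{1}>0$, we conclude $(S_{11})_\omega\mathbf{1}\geq T_{11}\mathbf{1}>0$, and similarly for $(S_{22})_\omega\mathbf{1}$. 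The main thing to be careful about is keeping the two levels of dominance straight: Lemma \ref{thm3} controls the \emph{complex} inverse $B_{22}^{-1}$ by the \emph{real} inverse $((B_{22})_\omega)^{-1}$, and the Cauchy--Schwarz estimate then converts the diagonal of the complex product into the corresponding real one.
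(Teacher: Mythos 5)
Your proposal is correct and follows essentially the same route as the paper: partition $B_\omega$ conformally, apply Lemma \ref{thm1}(2) to get that $(B_{11})_\omega$, $(B_{22})_\omega$ and the Schur complements $T_{11}$, $T_{22}$ of $B_\omega$ are nonsingular M-matrices with the positivity property, then show $(S_{11})_\omega\geq T_{11}$ entrywise via the triangle inequality off the diagonal and the estimate $\omega\,\mathrm{Re}(z)+(1-\omega)\,\mathrm{Im}(z)\leq|z|$ on the diagonal together with $|B_{22}^{-1}|\leq((B_{22})_\omega)^{-1}$ from Lemma \ref{thm3}, and conclude with Lemma \ref{thm1}(1) and $(S_{11})_\omega\mathbf{1}\geq T_{11}\mathbf{1}>0$. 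This matches the paper's argument step for step.
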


\begin{proof}
We have $$B_{\omega}=
\left(
\begin{matrix}
(B_{11})_{\omega}&-|B_{12}|\\
-|B_{21}|&(B_{22})_{\omega}
\end{matrix}
\right).$$
 By Lemma \ref{thm1}, we know that $(B_{11})_{\omega}$, $(B_{22})_{\omega}$ and their Schur complements,
$$(B_{22})_{\omega}-|B_{21}|((B_{11})_{\omega})^{-1}|B_{12}|,~~~(B_{11})_{\omega}-|B_{12}|((B_{22})_{\omega})^{-1}|B_{21}|$$
are also nonsingular M-matrices. By Lemma \ref{thm3}, $|B_{11}^{-1}|\leq ((B_{11})_{\omega})^{-1}, |B_{22}^{-1}|\leq ((B_{22})_{\omega})^{-1}$. Then,
\begin{eqnarray*}
\begin{aligned}
&~~~~{\rm diag}((S_{11})_{\omega})\\
&=\omega {\rm Re}({\rm diag}(S_{11}))+(1-\omega){\rm Im}({\rm diag}(S_{11}))\\
&=\omega {\rm Re}({\rm diag}(B_{11}))+(1-\omega){\rm Im}({\rm diag}(B_{11}))-\omega {\rm Re}({\rm diag}(B_{12}B_{22}^{-1}B_{21}))-(1-\omega){\rm Im}({\rm diag}(B_{12}B_{22}^{-1}B_{21}))\\
&\geq \omega {\rm Re}({\rm diag}(B_{11}))+(1-\omega){\rm Im}({\rm diag}(B_{11}))-|{\rm diag}(B_{12}B_{22}^{-1}B_{21})|\\
&\geq {\rm diag}((B_{11})_{\omega})-{\rm diag}(|B_{12}||((B_{22})_{\omega})^{-1}||B_{21}|)\\
&={\rm diag}((B_{11})_{\omega}-|B_{12}||((B_{22})_{\omega})^{-1}||B_{21}|),
\end{aligned}
\end{eqnarray*}
\begin{eqnarray*}
\begin{aligned}
{\rm offdiag}((S_{11})_{\omega})&=-|{\rm offdiag}((B_{11}-B_{12}B_{22}^{-1}B_{21})_{\omega})|\\
&\geq -|{\rm offdiag}(B_{11})|-|{\rm offdiag}(B_{12}B_{22}^{-1}B_{21})|\\
&\geq  {\rm offdiag}((B_{11})_{\omega})-{\rm offdiag}(|B_{12}||((B_{22})_{\omega})^{-1}||B_{21}|)\\
&={\rm offdiag}((B_{11})_{\omega}-|B_{12}||((B_{22})_{\omega})^{-1}||B_{21}|),
\end{aligned}
\end{eqnarray*}
which gives
$$(S_{11})_{\omega}\geq (B_{11})_{\omega}-|B_{12}||((B_{22})_{\omega})^{-1}||B_{21}|,$$
we have that $(S_{11})_{\omega}$ is a nonsingular M-matrix by Lemma \ref{thm1}. Similarly, $(S_{22})_{\omega}$ is also a nonsingular M-matrix.

If $B_{\omega}\textbf{1}>0$ , then $(B_{11})_{\omega}\textbf{1}>0$ and $((B_{11})_{\omega}-|B_{12}||((B_{22})_{\omega})^{-1}||B_{21}|)\textbf{1}>0$. Besides,
$$(S_{11})_{\omega}\textbf{1}\geq ((B_{11})_{\omega}-|B_{12}||((B_{22})_{\omega})^{-1}||B_{21}|)\textbf{1}>0.$$
Similarly, $(B_{22})_{\omega}\textbf{1}>0$ and $(S_{22})_{\omega}\textbf{1}>0$.

\end{proof}

\section{The existence and uniqueness of the extremal solution of the NARE (\ref{e1}) in class {\rm $\mathbb{H}^\omega$}}

\begin{theorem}\label{thm5}
Suppose the NARE {\rm(\ref{e1})} is in class {\rm $\mathbb{H}^\omega$} and $Q_{\omega}\textbf{1}>0$. Let $\widetilde{Q}$ be a nonsingular M-matrix satisfying $\widetilde{Q}\leq Q_{\omega}$ and $\widetilde{Q}\textbf{1}>0$, and partition $\widetilde{Q}$ as
\begin{eqnarray}\label{e8}
     \widetilde{Q}=
     \left(
     \begin{matrix}
     \widetilde{D}&-\widetilde{C}\\
     -\widetilde{B}&\widetilde{A}
     \end{matrix}
     \right),
   \end{eqnarray}
where $\widetilde{A}$, $\widetilde{B}$, $\widetilde{C}$ and $\widetilde{D}$ have same sizes as $A$, $B$, $C$ and $D$, respectively. Let $\widetilde{\Phi}$ and $\widetilde{\Psi}$ be the minimal nonnegative solutions of the NARE
   \begin{eqnarray}\label{e7}
     X\widetilde{C}X-X\widetilde{D}-\widetilde{A}X+\widetilde{B}=0
   \end{eqnarray}
and its cNARE
\begin{eqnarray}\label{e9}
     Y\widetilde{B}Y-Y\widetilde{A}-\widetilde{D}Y+\widetilde{C}=0,
   \end{eqnarray}
respectively. Then

1. The NARE {\rm(\ref{e1})} has a unique solution, denoted by $\Phi$, such that $|\Phi|\leq\widetilde{\Phi}$. Moreover, $D-C\Phi$ is a nonsingular matrix whose diagonal entries are in the upper right of $L(z_\omega,0)$, and $\Phi$ is the unique extremal solution of the NARE {\rm(\ref{e1})}, i.e., the solution such that the eigenvalues of $D-C\Phi$ are in the upper right of $L(z_\omega,0)$.

2. The NARE {\rm(\ref{e3})} has a unique solution, denoted by $\Psi$, such that $|\Psi|\leq\widetilde{\Psi}$. Moreover, $A-B\Psi$ is a nonsingular matrix whose diagonal entries are in the upper right of $L(z_\omega,0)$, and $\Psi$ is the unique extremal solution of the NARE {\rm(\ref{e3})}, i.e., the solution such that the eigenvalues of $A-B\Psi$ are in the upper right of $L(z_\omega,0)$.
\end{theorem}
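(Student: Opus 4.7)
The plan is to construct $\Phi$ as the limit of Newton's iteration for (\ref{e1}) run in lockstep with Newton's iteration for the MARE (\ref{e7}), with $\widetilde{\Phi}$ serving as a modulus majorant throughout. Start from $X_0=0$ and $\widetilde{X}_0=0$ and define
\[
(A-X_kC)X_{k+1}+X_{k+1}(D-CX_k)=B-X_kCX_k,
\]
together with the analogous Newton iterate $\widetilde{X}_{k+1}$ built from $\widetilde{A},\widetilde{B},\widetilde{C},\widetilde{D}$. Setting $\Delta_k=X_k-X_{k-1}$ (with $X_{-1}:=0$) and $\widetilde{\Delta}_k=\widetilde{X}_k-\widetilde{X}_{k-1}$, the standard Newton identity $R(X_{k+1})=\Delta_{k+1}C\Delta_{k+1}$ rewrites the recursion as
\[
(A-X_kC)\Delta_{k+1}+\Delta_{k+1}(D-CX_k)=\Delta_kC\Delta_k\quad(k\ge 1),
\]
with base case $A\Delta_1+\Delta_1 D=B$, and an identical recursion with tildes for $\widetilde{\Delta}_{k+1}$. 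By the classical MARE theory $\{\widetilde{X}_k\}$ is monotone increasing with limit $\widetilde{\Phi}$, and $\widetilde{A}-\widetilde{X}_k\widetilde{C}$, $\widetilde{D}-\widetilde{C}\widetilde{X}_k$ remain nonsingular M-matrices with positive row sums at every step.

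The core of the proof is an induction on $k$ showing $|X_k|\le\widetilde{X}_k$. Given this at level $k$, the dominances $(A-X_kC)_\omega\ge\widetilde{A}-\widetilde{X}_k\widetilde{C}$ and $(D-CX_k)_\omega\ge\widetilde{D}-\widetilde{C}\widetilde{X}_k$ follow entrywise from the hypothesis $\widetilde{Q}\le Q_\omega$, the elementary bound $|\omega\,{\rm Re}(z)+(1-\omega)\,{\rm Im}(z)|\le|z|$, and $|C|\le\widetilde{C}$. Lemma~\ref{thm1}.1 then makes $(A-X_kC)_\omega$ and $(D-CX_k)_\omega$ nonsingular M-matrices with positive row sums. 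A short computation from the definition of $\omega$-comparison shows that the $\omega$-comparison of the $mn\times mn$ Kronecker-sum matrix $I\otimes(A-X_kC)+(D-CX_k)^T\otimes I$ equals $I\otimes(A-X_kC)_\omega+((D-CX_k)_\omega)^T\otimes I$, which in turn dominates entrywise the nonsingular M-matrix $I\otimes(\widetilde{A}-\widetilde{X}_k\widetilde{C})+(\widetilde{D}-\widetilde{C}\widetilde{X}_k)^T\otimes I$. Applying Lemma~\ref{thm3} to this Kronecker-sum operator simultaneously yields unique solvability of the complex Sylvester increment equation and the modulus bound $|\Delta_{k+1}|\le\widetilde{\Delta}_{k+1}$, using $|\Delta_kC\Delta_k|\le\widetilde{\Delta}_k\widetilde{C}\widetilde{\Delta}_k$ from the inductive hypothesis (and $|B|\le\widetilde{B}$ at $k=0$). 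Telescoping the increments closes the induction, gives $|X_k|\le\widetilde{X}_k$ for all $k$, and makes $\{X_k\}$ Cauchy; continuity of $R$ together with $R(X_{k+1})=\Delta_{k+1}C\Delta_{k+1}\to 0$ then delivers $\Phi=\lim X_k$ with $R(\Phi)=0$ and $|\Phi|\le\widetilde{\Phi}$.

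A Gerschgorin argument converts the M-matrix structure of $(D-C\Phi)_\omega$, with strictly positive row sums inherited in the limit, into the claimed spectral statement: for any $\lambda\in\sigma(D-C\Phi)$ choose the Gerschgorin index $i$ with $|\lambda-[D-C\Phi]_{ii}|\le\sum_{j\neq i}|[D-C\Phi]_{ij}|$; then, using $|\omega\,{\rm Re}(z)+(1-\omega)\,{\rm Im}(z)|\le|z|$,
\[
\omega\,{\rm Re}(\lambda)+(1-\omega)\,{\rm Im}(\lambda)\ge \bigl[(D-C\Phi)_\omega\textbf{1}\bigr]_i>0,
\]
so $\lambda$ lies strictly in the upper right of $L(z_\omega,0)$. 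Applying the same estimate to $1\times 1$ principal submatrices (trivial Gerschgorin regions) gives the diagonal claim for $D-C\Phi$. For uniqueness of the extremal solution, if $\Phi_1,\Phi_2$ are two solutions with the extremal spectral property, subtracting two instances of (\ref{e1}) produces the Sylvester equation $(A-\Phi_1C)(\Phi_1-\Phi_2)+(\Phi_1-\Phi_2)(D-C\Phi_2)=0$; the extremal hypothesis places $\sigma(A-\Phi_1C)$ and $\sigma(-(D-C\Phi_2))$ in disjoint open half-planes separated by $L(z_\omega,0)$, forcing $\Phi_1=\Phi_2$. Part~2 follows by applying Part~1 to the cARE (\ref{e3}) with $\widetilde{\Psi}$ in the role of $\widetilde{\Phi}$.

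The hardest step is the combined solvability-plus-modulus-bound step in the induction: the complex Sylvester operator in the increment recursion is not itself the $\omega$-comparison of any single complex matrix appearing naturally in the problem, so one must ascend to the $mn\times mn$ Kronecker-sum level, identify its $\omega$-comparison explicitly in block form, and apply Lemma~\ref{thm3} there; the two dominance relations $(A-X_kC)_\omega\ge\widetilde{A}-\widetilde{X}_k\widetilde{C}$ and $(D-CX_k)_\omega\ge\widetilde{D}-\widetilde{C}\widetilde{X}_k$ are precisely what make the block-level comparison valid.
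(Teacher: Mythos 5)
Your construction of $\Phi$ (Newton iterates majorized increment-by-increment by the MARE Newton iterates, via the $\omega$-comparison of the Kronecker sum and Lemma~\ref{thm3}) is sound, and it is essentially the same comparison machinery the paper uses, merely routed through Newton's iteration instead of the paper's Picard fixed-point map $f(Z)=\Upsilon^{-1}(ZCZ+B)$ on $\{Z:|Z|\le\widetilde{\Phi}\}$. Your Gerschgorin localization of $\sigma(D-C\Phi)$ is also correct and matches step (iii) of the paper's proof. (One small omission: the theorem asserts uniqueness of the solution in the ball $|\Phi|\le\widetilde{\Phi}$; you should note that any solution in that ball satisfies the same Gerschgorin estimate, hence is extremal, so ball-uniqueness reduces to extremal-uniqueness.)

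The genuine gap is in your uniqueness argument for the extremal solution. The extremality of $\Phi_1$ is defined solely through the spectrum of $D-C\Phi_1$; it says nothing a priori about $\sigma(A-\Phi_1C)$, and since $\Phi_1$ is an arbitrary extremal solution you have no modulus bound on it either, so Gerschgorin cannot be applied to $A-\Phi_1C$ directly. Your assertion that ``the extremal hypothesis places $\sigma(A-\Phi_1C)$ \dots in disjoint open half-planes'' is therefore unsupported, and it is exactly the nontrivial content of the paper's step (iv). The missing link is the spectral dichotomy of $H$: since
\begin{equation*}
\begin{pmatrix} I & 0 \\ -\Phi_1 & I\end{pmatrix} H \begin{pmatrix} I & 0 \\ \Phi_1 & I\end{pmatrix}
=\begin{pmatrix} D-C\Phi_1 & -C \\ 0 & -(A-\Phi_1 C)\end{pmatrix},
\end{equation*}
one has $\sigma(H)=\sigma(D-C\Phi_1)\cup\sigma(-(A-\Phi_1C))$, and to conclude that $\sigma(A-\Phi_1C)$ lies strictly in the upper right of $L(z_\omega,0)$ you must first prove that $H$ has \emph{exactly} $n$ eigenvalues strictly in the upper right and $m$ strictly in the lower left of $L(z_\omega,0)$ (none on the line). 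The paper obtains this count from $Q_\omega\textbf{1}>0$ by a Gerschgorin argument applied to $JH$ relative to $\bigl(\begin{smallmatrix}D_\omega & -|C|\\ |B| & -A_\omega\end{smallmatrix}\bigr)$. Once that dichotomy is in place, your Sylvester-equation argument (or equivalently the paper's invariant-subspace argument) closes the uniqueness proof; without it, the proof is incomplete.
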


\begin{proof}
We only prove Theorem  \ref{thm5} for the NARE (\ref{e1}). Similar arguments also work for the dual NARE (\ref{e3}) and thus omitted. Our proof is similar to that given by \cite {Axe1}. Here we only emphasize the differences.

The proof will be completed in four steps:

\rmnum{1}. Define the linear operators $\Upsilon$ and $\widetilde{\Upsilon}$ and prove that the invertibility of $\Upsilon$ and $\widetilde{\Upsilon}$.

\rmnum{2}. The mapping $f:\mathbb{C}^{m\times n}\rightarrow \mathbb{C}^{m\times n}$ given by
$f(Z)=\Upsilon^{-1}(ZCZ+B)$ has a fixed point in the set $\mathbb{S}=\{Z:|Z|\leq \widetilde{\Phi}\}$.

\rmnum{3}. $D-C\Phi$ is a nonsingular matrix whose diagonal entries are in the upper right of $L(z_\omega)$. Moreover, $\Phi$ is the extremal solution of the NARE (\ref{e1}), i.e., the solution such that the eigenvalues of $D-C\Phi$ are in the upper right of $L(z_\omega)$.

\rmnum{4}.  The uniqueness of the extremal solution.

(\rmnum{1}). Let the linear operators $\Upsilon,\widetilde{\Upsilon}: \mathbb{C}^{m\times n}\rightarrow \mathbb{C}^{m\times n}$ be defined by
   \begin{eqnarray}\label{e10}
     \Upsilon(X)=XD+AX,~~~~\widetilde{\Upsilon}(X)=X\widetilde{D}+\widetilde{A}X,
   \end{eqnarray}
respectively. The operator $\widetilde{\Upsilon}$ is invertible as $\widetilde{A}$ and $\widetilde{D}$ are nonsingular M-matrices. As
   $Q_{\omega}=
   \left(
   \begin{matrix}
   D_{\omega}&-|C|\\
   -|B|&A_{\omega}
   \end{matrix}
   \right)$
is a nonsingular M-matrix, $A_{\omega}$ and $D_{\omega}$ are also nonsingular M-matrices. We have
   \begin{eqnarray*}
     \widetilde{D}\leq D_{\omega},~\widetilde{A}\leq A_{\omega},~|C|\leq\widetilde{C},~|B|\leq\widetilde{B},
   \end{eqnarray*}
since $\widetilde{Q}\leq Q_{\omega}$.

By (\ref{e10}), we have ${\rm vec}(\Upsilon(X))={\rm vec}(XD+AX)=(D^T\otimes I+I\otimes A){\rm vec}(X)$. Because for $i=1,2,\cdots,n$; $j=1,2,\cdots,m$, we have
   \begin{eqnarray*}
     \begin{aligned}
    |[D]_{ii}+[A]_{jj}|&=\sqrt{({\rm Re}([D]_{ii})+{\rm Re}([A]_{jj}))^2+({\rm Im}([D]_{ii})+{\rm Im}([A]_{jj}))^2}\\
    &\geq \sqrt{\omega^2+(1-\omega)^2}\sqrt{({\rm Re}([D]_{ii})+{\rm Re}([A]_{jj}))^2+({\rm Im}([D]_{ii})+{\rm Im}([A]_{jj}))^2}\\
    &\geq \omega({\rm Re}([D]_{ii})+{\rm Re}([A]_{jj}))+(1-\omega)({\rm Im}([D]_{ii})+{\rm Im}([A]_{jj}))\\
    &=(\omega{\rm Re}([D]_{ii})+(1-\omega){\rm Im}([D]_{ii}))+(\omega{\rm Re}([A]_{jj})+(1-\omega){\rm Im}([A]_{jj})).
     \end{aligned}
   \end{eqnarray*}
This leads to
   \begin{eqnarray*}
     \begin{aligned}
     |{\rm diag}(D^T\otimes I+I\otimes A)|&\geq{\rm diag}((D_{\omega})^T\otimes I +I\otimes A_{\omega})\geq {\rm diag}(\widetilde{D}^T\otimes I +I\otimes \widetilde{A}),\\
     |{\rm offdiag}(D^T\otimes I +I\otimes A)|&\leq|{\rm offdiag}((D_{\omega})^T\otimes I +I\otimes A_{\omega})|\leq |{\rm offdiag}(\widetilde{D}^T\otimes I +I\otimes \widetilde{A})|.
     \end{aligned}
   \end{eqnarray*}
By Lemma \ref{thm2}, we have that $D^T\otimes I +I\otimes A$ is nonsingular with
$|(D^T\otimes I +I\otimes A)^{-1}|\leq(\widetilde{D}^T\otimes I +I\otimes \widetilde{A})^{-1}$. So, the operator $\Upsilon$ is also invertible.

(\rmnum{2}). The proof is similar to that of Theorem 3.1 given  by \cite {Axe1}, thus omitted.

(\rmnum{3}). Let $R=D-C\Phi$ and $\widetilde{R}=\widetilde{D}-\widetilde{C}\widetilde{\Phi}$. As $\widetilde{Q}\textbf{1}>0$, $\widetilde{R}$ is a nonsingular M-matrix with $\widetilde{R}\textbf{1}>0$.
For $i=1,2,\cdots,n$, we have $[\widetilde{R}]_{ii}=[\widetilde{D}]_{ii}-[\widetilde{C}\widetilde{\Phi}]_{ii}>\sum_{j=1,j\neq i}^{n}([\widetilde{C}\widetilde{\Phi}]_{ij}-[\widetilde{D}]_{ij})\geq0$
and
   $\sum_{j=1,j\neq i}^{n}|[R]_{ij}|<[\widetilde{D}]_{ii}-[\widetilde{C}\widetilde{\Phi}]_{ii}.$
On the other hand, we have
\begin{eqnarray}\label{e11}
\begin{aligned}
&~~~~\omega {\rm Re}([R]_{ii})+(1-\omega){\rm Im}([R]_{ii})\\
&=\omega {\rm Re}([D]_{ii})+(1-\omega){\rm Im}([D]_{ii})-\omega {\rm Re}([C\Phi]_{ii})-(1-\omega){\rm Im}([C\Phi]_{ii})\\
&\geq [\widetilde{D}]_{ii}-|[C\Phi]_{ii}|\\
&\geq[\widetilde{D}]_{ii}-[\widetilde{C}\widetilde{\Phi}]_{ii}\\
&>\sum_{j=1,j\neq i}^{n}|[R]_{ij}|,~~i=1,2,\cdots,n.
\end{aligned}
\end{eqnarray}
By (\ref{e11}), we can say that $[R]_{ii}$ are in the upper right of $L(z_\omega,\sum_{j=1,j\neq i}^{n}|[R]_{ij}|),i=1,2,\cdots,n$. The distance between $L(z_\omega,0)$ and $L(z_\omega,\sum_{j=1,j\neq i}^{n}|[R]_{ij}|)$ can be expressed as $\frac{\sum_{j=1,j\neq i}^{n}|[R]_{ij}|}{\sqrt{\omega^2+(1-\omega)^2}}\geq \sum_{j=1,j\neq i}^{n}|[R]_{ij}|$ since $\omega^2+(1-\omega)^2\leq1$ for any $\omega\in[0,1]$. That is to say, the distance between $[R]_{ii}$ and $L(z_\omega,0)$ is greater or equal to $\sum_{j=1,j\neq i}^{n}|[R]_{ij}|$ for each $i$. So it follows from Gershgorin's theorem that the eigenvalues of $R$ are in the upper right of $L(z_\omega,0)$.

(\rmnum{4}). Let $S=A-B\Psi$. Similar to the proof of (\rmnum{3}), it is easy to show that the eigenvalues of $S$ are in the upper right of $L(z_\omega,0)$.
Because
   \begin{eqnarray*}
   H
   \left(
   \begin{matrix}
   I\\
   \Phi
   \end{matrix}
   \right)
   =
   \left(
   \begin{matrix}
   I\\
   \Phi
   \end{matrix}
   \right)R,~~~
   H
   \left(
   \begin{matrix}
   \Psi\\
   I
   \end{matrix}
   \right)
   =\left(
   \begin{matrix}
   \Psi\\
   I
   \end{matrix}
   \right)(-S),
   \end{eqnarray*}
$\Phi$ is determined by an invariant subspace of $H$ corresponding to $n$ eigenvalues in the upper right of $L(z_\omega,0)$, and $\Psi$ is determined by an invariant subspace of $H$ corresponding to $m$ eigenvalues in the lower left of $L(z_\omega,0)$.  Note that $Q_{\omega}\textbf{1}>0$, it follows from Gershgorin's theorem that
$$\left(
\begin{matrix}
D_{\omega}&-|C|\\
|B|&-A_{\omega}
\end{matrix}
\right)$$
has exactly $n$ eigenvalues in $\mathbb{C}^+$ and $m$ eigenvalues in $\mathbb{C}^-$. Similarly to proof of (\rmnum{3}), $H$ has exactly $n$ eigenvalues in the upper right of $L(z_\omega,0)$ and $m$ eigenvalues in the lower left of $L(z_\omega,0)$.  Consequently, $\Phi$ is the unique solution such that the eigenvalues of $D-C\Phi$ are in the upper right of $L(z_\omega,0)$ and $\Psi$ is the unique solution such that the eigenvalues of $A-B\Psi$ are in the upper right of $L(z_\omega,0)$.
\end{proof}

\begin{remark}\label{thm9}

1. When $\omega=0$, $z_\omega=-1$, the straight line $L(z_\omega,0)=-l$, where $l$ takes all real numbers. Obviously, the line will be the real axis. The upper right of $L(z_\omega,0)$ will be upper half plane of the whole complex plane.

2. When $\omega=1$, $z_\omega={\rm j}$, the straight line $L(z_\omega,0)={\rm j}l$, where $l$ takes all real numbers. Obviously, the line will be the imaginary axis. The upper right of $L(z_\omega,0)$ will be $\mathbb{C}^+$.

3. When $\omega$ ranges from $0$ to $1$, the straight line $L(z_\omega,0)$ rotates clockwise from the real axis to the imaginary axis at the center of origin, and the the upper right of $L(z_\omega,0)$  rotates clockwise from the upper half plane of the whole complex plane to the half plane $\mathbb{C}^+$  at the center of origin. So we can say that our results generalize the results of Theorem 3.1 given by \cite {Axe1}.

\end{remark}

\section{Apply Newton's method to solve the NARE (\ref{e1}) in class $\mathbb{H}^{\omega}$}
Applying Newton's method to solve the NARE (\ref{e1}) in class $\mathbb{H}^\omega$, we get the iterative scheme:
   \begin{eqnarray}\label{e12}
   \begin{aligned}
     \Phi_0=0,~~(A-\Phi_kC)\Phi_{k+1}+\Phi_{k+1}(D-C\Phi_{k})=B-\Phi_{k}C\Phi_{k},~k\geq0.
     \end{aligned}
   \end{eqnarray}
In this section we will show that $\{\Phi_k\}$ quadratically converges to  the extremal solution $\Phi$ of the NARE (\ref{e1}) in class $\mathbb{H}^\omega$.

To achieve this, we consider Newton's iteration for the NARE (\ref{e7}) in class $\mathbb{M}$:
   \begin{eqnarray}\label{e13}
   \begin{aligned}
     \widetilde{\Phi}_0=0,~~(\widetilde{A}-\widetilde{\Phi}_k\widetilde{C})\widetilde{\Phi}_{k+1}+\widetilde{\Phi}_{k+1}(\widetilde{D}-\widetilde{C}\widetilde{\Phi}_{k})&=\widetilde{B}-\widetilde{\Phi}_{k}\widetilde{C}\widetilde{\Phi}_{k},~k\geq0.
     \end{aligned}
   \end{eqnarray}
It has been shown by \cite {Axe14} that $\{\widetilde{\Phi}_k\}$ generated by (\ref{e13}) increasingly quadratically converges to the minimal nonnegative solution $\widetilde{\Phi}$ of the NARE (\ref{e7}). We first compare the increments at each iterative step of the two iterations (\ref{e12}) and (\ref{e13}) as in Lemma \ref{thm6} below. Its proof is slightly different from that of Lemma 4.1 given by \cite {Axe1}. Here we give its detailed proof for readers' convenience.

\begin{lemma}\label{thm6}
Let the sequences $\{\Phi_k\}$ and $\{\widetilde{\Phi}_k\}$ be generated by the iterations {\rm(\ref{e12})}  for the NARE {\rm(\ref{e1})} in class $\mathbb{H}^\omega$ and {\rm(\ref{e13})}  for the NARE {\rm(\ref{e7})} in class $\mathbb{M}$, respectively. Let $Q$ and $\widetilde{Q}$ be as in {\rm(\ref{e2})} and {\rm(\ref{e8})}, respectively. Suppose that $Q_\omega$ and $\widetilde{Q}$ are nonsingular M-matrices satisfying
   $$Q_\omega\textbf{1}>0,~~\widetilde{Q}\textbf{1}>0,~~\widetilde{Q}\leq Q_\omega.$$
Then we have
   $$|\Phi_{k+1}-\Phi_k|\leq \widetilde{\Phi}_{k+1}-\widetilde{\Phi}_k,~k=1,2,\cdots.$$
\end{lemma}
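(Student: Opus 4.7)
The plan is to proceed by induction on $k$, carrying along the auxiliary invariant $|\Phi_k|\le\widetilde{\Phi}_k$ that is needed to control the coefficients of the Newton iteration at each step. The first preparatory step is a compact identity for the increments: subtracting the $(k-1)$st equation in \eqref{e12} from the $k$th, and eliminating $A\Phi_k+\Phi_k D$ via the $(k-1)$st equation itself, yields with $E_k:=\Phi_{k+1}-\Phi_k$ the clean recursion
\[(A-\Phi_k C)\,E_k+E_k\,(D-C\Phi_k)=E_{k-1}\,C\,E_{k-1},\qquad k\ge 1,\]
together with the entirely analogous $(\widetilde A-\widetilde\Phi_k\widetilde C)\widetilde E_k+\widetilde E_k(\widetilde D-\widetilde C\widetilde\Phi_k)=\widetilde E_{k-1}\widetilde C\widetilde E_{k-1}$ for the tilde sequence, where $\widetilde E_k:=\widetilde\Phi_{k+1}-\widetilde\Phi_k$.

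Vectorising turns these into $L_k\,\mathrm{vec}(E_k)=\mathrm{vec}(E_{k-1}CE_{k-1})$ and $\widetilde L_k\,\mathrm{vec}(\widetilde E_k)=\mathrm{vec}(\widetilde E_{k-1}\widetilde C\widetilde E_{k-1})$, where $L_k:=(D-C\Phi_k)^T\otimes I_m+I_n\otimes(A-\Phi_k C)$ and $\widetilde L_k$ is its tilde counterpart. The standard analysis of Newton's iteration for MAREs in class $\mathbb{M}$ (see \cite{Axe14}) guarantees that $\widetilde A-\widetilde\Phi_k\widetilde C$ and $\widetilde D-\widetilde C\widetilde\Phi_k$ are nonsingular M-matrices, so $\widetilde L_k$ is a nonsingular M-matrix. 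If the two inequalities $|\mathrm{diag}(L_k)|\ge\mathrm{diag}(\widetilde L_k)$ and $|\mathrm{offdiag}(L_k)|\le|\mathrm{offdiag}(\widetilde L_k)|$ hold, Lemma \ref{thm2} delivers $|L_k^{-1}|\le\widetilde L_k^{-1}$, and then
\[|\mathrm{vec}(E_k)|\le\widetilde L_k^{-1}\,\mathrm{vec}(|E_{k-1}|\,|C|\,|E_{k-1}|)\le\widetilde L_k^{-1}\,\mathrm{vec}(\widetilde E_{k-1}\widetilde C\widetilde E_{k-1})=\mathrm{vec}(\widetilde E_k)\]
closes the inductive step, while $|\Phi_{k+1}|\le|\Phi_k|+|E_k|\le\widetilde\Phi_{k+1}$ propagates the invariant. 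The base case $|E_0|=|\Phi_1|\le\widetilde\Phi_1=\widetilde E_0$ is handled by exactly the argument already carried out in part (i) of the proof of Theorem \ref{thm5}, applied to $L_0=D^T\otimes I_m+I_n\otimes A$.

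The crux of the argument, and the main obstacle, is the diagonal comparison $|\mathrm{diag}(L_k)|\ge\mathrm{diag}(\widetilde L_k)$, since the diagonal of $L_k$ is complex while that of $\widetilde L_k$ is real and positive. A typical diagonal entry of $L_k$ has the form $(D-C\Phi_k)_{ii}+(A-\Phi_k C)_{jj}$; I plan to bound its modulus below using the $\omega$-weighted Cauchy--Schwarz inequality $|z|\ge\omega\,\mathrm{Re}(z)+(1-\omega)\,\mathrm{Im}(z)$ — the same device used in part (i) of the proof of Theorem \ref{thm5} — and then reduce the resulting real quantity to the corresponding diagonal entry of $\widetilde L_k$ by invoking $\widetilde Q\le Q_\omega$ (which gives $\widetilde D\le D_\omega$, $\widetilde A\le A_\omega$, $|C|\le\widetilde C$, $|B|\le\widetilde B$) together with the inductive invariant $|\Phi_k|\le\widetilde\Phi_k$. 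The off-diagonal comparison is then an immediate application of the triangle inequality combined with $|\Phi_k|\le\widetilde\Phi_k$ and $|C|\le\widetilde C$; maintaining the invariant $|\Phi_k|\le\widetilde\Phi_k$ at every step is the single ingredient without which the diagonal estimate, and hence the whole induction, would collapse.
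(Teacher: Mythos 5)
Your proposal is correct and follows essentially the same route as the paper: the increment recursion for Newton's method, vectorisation, the comparison $|L_k^{-1}|\le \widetilde L_k^{-1}$ via Lemma \ref{thm2} using the $\omega$-weighted bound $|z|\ge \omega\,{\rm Re}(z)+(1-\omega)\,{\rm Im}(z)$ on the diagonal, and induction carrying the telescoped invariant $|\Phi_k|\le\widetilde\Phi_k$. The only cosmetic difference is that you apply the $\omega$-weighted inequality directly to the full diagonal entry of $L_k$, while the paper first splits off the $\Phi_kC$ and $C\Phi_k$ contributions by the reverse triangle inequality; both reductions are equivalent.
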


\begin{proof}
Notice that $\widetilde{\Phi}_k\leq \widetilde{\Phi}_{k+1}$ and  $I\otimes (\widetilde{A}-\widetilde{\Phi}_k\widetilde{C})+(\widetilde{D}-\widetilde{C}\widetilde{\Phi}_k)^T\otimes I$ is a nonsingular M-matrix for each nonnegative integer $k$. The proof goes on by mathematical induction on $k$. For $k=0$, we have
$$(I\otimes A+D^T\otimes I) {\rm vec}(\Phi_1)={\rm vec}(B),~~~(I\otimes \widetilde{A}+\widetilde{D}^T\otimes I) {\rm vec}(\widetilde{\Phi}_1)={\rm vec}(\widetilde{B}).$$
By the proof of Theorem  \ref{thm5}, we can obtain $$|(I\otimes A+D^T\otimes I)^{-1}|\leq (I\otimes \widetilde{A}+\widetilde{D}^T\otimes I)^{-1},$$
which results in $|\Phi_1|\leq \widetilde{\Phi}_1$. For $k\geq 1$, we have
   \begin{eqnarray*}
   \begin{aligned}
   &(A-\Phi_kC)(\Phi_{k+1}-\Phi_k)+(\Phi_{k+1}-\Phi_k)(D-C\Phi_k)=(\Phi_{k}-\Phi_{k-1})C(\Phi_{k}-\Phi_{k-1}),\\
   &(\widetilde{A}-\widetilde{\Phi}_k\widetilde{C})(\widetilde{\Phi}_{k+1}-\widetilde{\Phi}_k)
   +(\widetilde{\Phi}_{k+1}-\widetilde{\Phi}_k)(\widetilde{D}-\widetilde{C}\widetilde{\Phi}_k)
   =(\widetilde{\Phi}_{k}-\widetilde{\Phi}_{k-1})\widetilde{C}(\widetilde{\Phi}_{k}-\widetilde{\Phi}_{k-1}).
   \end{aligned}
   \end{eqnarray*}
Suppose that $|\Phi_{k+1}-\Phi_{k}|\leq\widetilde{\Phi}_{k+1}-\widetilde{\Phi}_{k}$ for $k\leq l-1$. Then,
   $$|\Phi_k|\leq \sum_{j=1}^{k}|\Phi_{k}-\Phi_{k-1}|\leq
   \sum_{j=1}^{k}(\widetilde{\Phi}_{k}-\widetilde{\Phi}_{k-1})=\widetilde{\Phi}_k,~1\leq k\leq l.$$
Because $|{\rm diag}(I\otimes A+D^T\otimes I)|\geq|{\rm diag}(I\otimes A_{\omega}+(D_\omega)^T\otimes I)|\geq {\rm diag}(I\otimes \widetilde{A}+\widetilde{D}^T\otimes I)$, we have
   \begin{eqnarray*}
   \begin{aligned}
   &~~~~|{\rm diag}(I\otimes(A-\Phi_lC)+(D-C\Phi_l)^T\otimes I)|\\
   &\geq|{\rm diag}(I\otimes A+D^T\otimes I)|-|{\rm diag}(I\otimes\Phi_lC+(C\Phi_l)^T\otimes I)|\\
   &\geq{\rm diag}(I\otimes \widetilde{A}+\widetilde{D}^T\otimes I)-{\rm diag}(I\otimes\widetilde{\Phi}_l\widetilde{C}+(\widetilde{C}\widetilde{\Phi}_l)^T\otimes I)\\
   &={\rm diag}(I\otimes (\widetilde{A}-\widetilde{\Phi}_l\widetilde{C})+(\widetilde{D}-\widetilde{C}\widetilde{\Phi}_l)^T\otimes I)
   \end{aligned}
   \end{eqnarray*}
and
   \begin{eqnarray*}
   \begin{aligned}
   &~~~~|{\rm offdiag}(I\otimes (A-\Phi_lC)+(D-C\Phi_l)^T\otimes I)|\\
   &\leq|{\rm offdiag}(I\otimes A)|+|{\rm offdiag}(I\otimes(\Phi_lC))|+|{\rm offdiag}(D^T\otimes I)|+|{\rm offdiag}((C\Phi_l)^T\otimes I)|\\
   &\leq-{\rm offdiag}(I\otimes \widetilde{A})+|{\rm offdiag}(I\otimes(\widetilde{\Phi}_l\widetilde{C}))|-{\rm offdiag}(\widetilde{D}^T\otimes I)+|{\rm offdiag}((\widetilde{C}\widetilde{\Phi}_l)^T\otimes I)|\\
   &=|{\rm offdiag}(I\otimes(\widetilde{A}-\widetilde{\Phi}_l\widetilde{C}))|+|{\rm offdiag}((\widetilde{D}-\widetilde{C}\widetilde{\Phi}_l)^T\otimes I)|\\
   &=|{\rm offdiag}(I\otimes (\widetilde{A}-\widetilde{\Phi}_l\widetilde{C})+(\widetilde{D}-\widetilde{C}\widetilde{\Phi}_l)^T\otimes I)|.
   \end{aligned}
   \end{eqnarray*}
Thus we have
   $$|(I\otimes(A-\Phi_lC)+(D-C\Phi_l)^T\otimes I)^{-1}|\leq (I\otimes (\widetilde{A}-\widetilde{\Phi}_l\widetilde{C})+(\widetilde{D}-\widetilde{C}\widetilde{\Phi}_l)^T\otimes I)^{-1}$$
and
   $$(\Phi_{l+1}-\Phi_l)=(I\otimes(A-\Phi_lC)+(D-C\Phi_l)^T\otimes I)^{-1}(\Phi_{l}-\Phi_{l-1})C(\Phi_{l}-\Phi_{l-1}).$$
By the induction hypothesis,
   $$|(\Phi_l-\Phi_{l-1})C(\Phi_l-\Phi_{l-1})|\leq (\widetilde{\Phi}_l-\widetilde{\Phi}_{l-1})\widetilde{C}(\widetilde{\Phi}_l-\widetilde{\Phi}_{l-1}).$$
Therefore, we have $$|\Phi_{l+1}-\Phi_{l}|\leq \widetilde{\Phi}_{l+1}-\widetilde{\Phi}_l.$$

\end{proof}

After establishing Lemma \ref{thm6}, we can easily prove that the sequence $\{\Phi_k\}$ quadratically converges to the extremal solution $\Phi$ of the NARE (\ref{e1}) in class $\mathbb{H}^\omega$ as in Theorem \ref{thm7} below. Although the proof of Theorem \ref{thm7} is similar to that of Theorem 4.1 given by \cite {Axe1}, thus omitted, the connotation of Theorem \ref{thm7} is more abundant than that of Theorem 4.1 given by \cite {Axe1}.

\begin{theorem}\label{thm7}
The sequence $\{\Phi_k\}$ generated by the iterations {\rm(\ref{e12})} for the NARE {\rm(\ref{e1})} in class $\mathbb{H}^\omega$ quadratically converges to the extremal solution $\Phi$.
\end{theorem}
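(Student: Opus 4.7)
The plan is to mimic the proof of Theorem~4.1 in Axe1, with Lemma~\ref{thm6} doing all the heavy lifting. First I would show that $\{\Phi_k\}$ converges by telescoping the bound from Lemma~\ref{thm6}: for $p \geq 1$,
\begin{eqnarray*}
|\Phi_{k+p}-\Phi_k| \leq \sum_{j=k}^{k+p-1} |\Phi_{j+1}-\Phi_j| \leq \sum_{j=k}^{k+p-1} (\widetilde{\Phi}_{j+1}-\widetilde{\Phi}_j) = \widetilde{\Phi}_{k+p}-\widetilde{\Phi}_k.
\end{eqnarray*}
Since $\{\widetilde{\Phi}_k\}$ is known (from the MARE theory of Axe14) to converge monotonically to the minimal nonnegative solution $\widetilde{\Phi}$ of (\ref{e7}), the right-hand side tends to zero, and $\{\Phi_k\}$ is Cauchy in $\mathbb{C}^{m\times n}$, hence has a limit $\Phi_*$ satisfying $|\Phi_*-\Phi_k| \leq \widetilde{\Phi}-\widetilde{\Phi}_k$ and, in particular, $|\Phi_*| \leq \widetilde{\Phi}$.

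Next I would verify that $\Phi_*$ solves (\ref{e1}) by passing to the limit in the Newton recurrence (\ref{e12}); this is a routine continuity argument because the coefficients $A-\Phi_k C$ and $D-C\Phi_k$ and the right-hand side $B-\Phi_k C\Phi_k$ are all polynomial in $\Phi_k$. The bound $|\Phi_*| \leq \widetilde{\Phi}$ together with the uniqueness part of Theorem~\ref{thm5} then forces $\Phi_* = \Phi$, the extremal solution.

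For the rate, I would use the telescoping estimate above in the form
\begin{eqnarray*}
|\Phi - \Phi_k| \leq \widetilde{\Phi} - \widetilde{\Phi}_k,
\end{eqnarray*}
valid for every $k$. Because $\{\widetilde{\Phi}_k\}$ is Newton iteration for the MARE (\ref{e7}) in class $\mathbb{M}$, it converges quadratically, i.e.\ $\|\widetilde{\Phi}-\widetilde{\Phi}_{k+1}\| \leq c\|\widetilde{\Phi}-\widetilde{\Phi}_k\|^2$ for some constant $c$ and $k$ large. Combined with $\|\Phi-\Phi_k\| \leq \|\widetilde{\Phi}-\widetilde{\Phi}_k\|$, this dominates $\|\Phi-\Phi_k\|$ by a quadratically convergent real sequence, which is the precise meaning of quadratic convergence claimed in the theorem.

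The main obstacle is really already absorbed into Lemma~\ref{thm6}, so here the only delicate point is keeping the comparison chain honest: one must make sure the operator inequalities used in the induction step of Lemma~\ref{thm6} (the Kronecker-product bound $|(I\otimes(A-\Phi_k C)+(D-C\Phi_k)^T\otimes I)^{-1}| \leq (I\otimes(\widetilde{A}-\widetilde{\Phi}_k\widetilde{C})+(\widetilde{D}-\widetilde{C}\widetilde{\Phi}_k)^T\otimes I)^{-1}$) persist uniformly in $k$, which they do because $\widetilde{\Phi}_k \leq \widetilde{\Phi}$ stays bounded and the dominating operator remains a nonsingular M-matrix throughout the iteration. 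Apart from this uniformity check, no new ideas beyond those in Sections~2--3 and Lemma~\ref{thm6} are required.
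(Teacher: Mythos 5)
Your proposal is correct and is essentially the argument the paper intends (it omits the proof, citing Theorem~4.1 of Liu \& Xue): Lemma~\ref{thm6} plus telescoping gives $|\Phi-\Phi_k|\leq \widetilde{\Phi}-\widetilde{\Phi}_k$, the limit is identified with the extremal solution via the uniqueness part of Theorem~\ref{thm5}, and the quadratic convergence of the Newton iterates for the MARE (\ref{e13}) then dominates the error, which is exactly the (R-)quadratic sense in which the theorem is meant.
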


\section{Apply the fixed-point iterative methods to solve the NARE (\ref{e1}) in class $\mathbb{H}^\omega$}
Based on the two splittings
   $$A=A_1-A_2,~D=D_1-D_2,$$
we derive the fixed-point iterative scheme
   \begin{eqnarray}\label{e14}
   \begin{aligned}
     \Phi_0=0,~~A_1\Phi_{k+1}+\Phi_{k+1}D_1=B+A_2\Phi_{k}+\Phi_{k}D_2+\Phi_{k}C\Phi_{k},~k\geq0,
     \end{aligned}
   \end{eqnarray}
for the NARE (\ref{e1}) in class $\mathbb{H}^\omega$. Theorem \ref{thm8} below gives a sufficient condition under which  the sequence $\{\Phi_k\}$ generated by the fixed-point iteration (\ref{e14}) is linearly convergent to the extremal solution $\Phi$ of the NARE (\ref{e1}) in class $\mathbb{H}^\omega$.

\begin{theorem}\label{thm8}
Suppose that $Q_{\omega}\textbf{1}>0$, let
   $$\widetilde{A}=(A_1)_{\omega}-|A_2|,~~\widetilde{D}=(D_1)_{\omega}-|D_2|,~~\widetilde{B}=|B|,~~\widetilde{C}=|C|.$$
If
   $$I\otimes (A_1)_{\omega}+((D_1)_{\omega})^T\otimes I$$
is a nonsingular M-matrix, and $\widetilde{Q}$ is a nonsingular M-matrix satisfying
   $$\widetilde{Q}\leq Q_{\omega},~~~\widetilde{Q}\textbf{1}>0,$$
then  the sequence $\{\Phi_k\}$ generated by the iterative scheme {\rm(\ref{e14})}
linearly converges to the extremal solution $\Phi$ of the NARE {\rm(\ref{e1})} in class $\mathbb{H}^{\omega}$.
\end{theorem}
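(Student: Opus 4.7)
The plan is to mirror the strategy of Lemma~\ref{thm6} and majorize the complex fixed-point iteration~(\ref{e14}) by a nonnegative fixed-point iteration for the MARE~(\ref{e7}), built from the companion splittings $\widetilde{A}=\widetilde{A}_1-\widetilde{A}_2$ with $\widetilde{A}_1=(A_1)_\omega,\ \widetilde{A}_2=|A_2|$, and $\widetilde{D}=\widetilde{D}_1-\widetilde{D}_2$ with $\widetilde{D}_1=(D_1)_\omega,\ \widetilde{D}_2=|D_2|$. The associated tilde iteration reads
\begin{eqnarray*}
\widetilde{\Phi}_0 = 0, \quad \widetilde{A}_1 \widetilde{\Phi}_{k+1} + \widetilde{\Phi}_{k+1} \widetilde{D}_1 = \widetilde{B} + \widetilde{A}_2 \widetilde{\Phi}_k + \widetilde{\Phi}_k \widetilde{D}_2 + \widetilde{\Phi}_k \widetilde{C} \widetilde{\Phi}_k.
\end{eqnarray*}

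First I would establish that each Sylvester step in~(\ref{e14}) is well-defined. Applying the entrywise $\omega$-comparison inequality used in the proof of Theorem~\ref{thm5} to the tensor-product operator yields $|{\rm diag}(I\otimes A_1+D_1^T\otimes I)|\geq {\rm diag}(I\otimes\widetilde{A}_1+\widetilde{D}_1^T\otimes I)$ and $|{\rm offdiag}(I\otimes A_1+D_1^T\otimes I)|=|{\rm offdiag}(I\otimes\widetilde{A}_1+\widetilde{D}_1^T\otimes I)|$, so Lemma~\ref{thm2} together with the M-matrix hypothesis gives
\begin{eqnarray*}
|(I \otimes A_1 + D_1^T \otimes I)^{-1}| \leq (I \otimes \widetilde{A}_1 + \widetilde{D}_1^T \otimes I)^{-1}.
\end{eqnarray*}
Since $\widetilde{Q}\textbf{1}>0$ and $\widetilde{Q}$ is a nonsingular M-matrix, the MARE~(\ref{e7}) admits a minimal nonnegative solution $\widetilde{\Phi}$, and the entrywise bound $(A_1)_\omega\geq\widetilde{A}$ together with Lemma~\ref{thm1} shows that $\widetilde{A}_1$ and $\widetilde{D}_1$ are themselves nonsingular M-matrices, so the splittings are regular. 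Known monotone-convergence results for regular splittings of MAREs (see \cite{Axe3}) then give that $\{\widetilde{\Phi}_k\}$ is nondecreasing in $k$ and converges linearly to $\widetilde{\Phi}$.

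The core step is a simultaneous induction on $k$ showing $|\Phi_k|\leq\widetilde{\Phi}_k$ and $|\Phi-\Phi_k|\leq\widetilde{\Phi}-\widetilde{\Phi}_k$; the base case is trivial and Theorem~\ref{thm5} supplies $|\Phi|\leq\widetilde{\Phi}$. For the inductive step, subtracting~(\ref{e14}) from the identity $A_1\Phi+\Phi D_1=B+A_2\Phi+\Phi D_2+\Phi C\Phi$ and splitting the bilinear difference as $\Phi C\Phi-\Phi_kC\Phi_k=\Phi C(\Phi-\Phi_k)+(\Phi-\Phi_k)C\Phi_k$ gives
\begin{eqnarray*}
A_1(\Phi - \Phi_{k+1}) + (\Phi - \Phi_{k+1}) D_1 = A_2(\Phi - \Phi_k) + (\Phi - \Phi_k) D_2 + \Phi C(\Phi - \Phi_k) + (\Phi - \Phi_k) C \Phi_k,
\end{eqnarray*}
together with the analogous tilde identity. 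Taking entrywise moduli, the induction hypothesis and $|\Phi|\leq\widetilde{\Phi}$ majorize the right-hand side term by term by its tilde counterpart, and the Sylvester inverse bound above transfers the majorization after vectorization, giving $|\Phi-\Phi_{k+1}|\leq\widetilde{\Phi}-\widetilde{\Phi}_{k+1}$. The bound $|\Phi_{k+1}|\leq\widetilde{\Phi}_{k+1}$ follows from the same argument applied to~(\ref{e14}) directly versus its tilde analogue. Linear convergence of $\Phi_k\to\Phi$ at (at least) the linear rate of $\widetilde{\Phi}-\widetilde{\Phi}_k\to 0$ is then immediate.

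The main obstacle is the termwise majorization in the inductive step: four distinct contributions (the two splittings and the two factors of the quadratic difference) must each be dominated entrywise, and the resulting nonnegative inequality must be pushed through the vectorized Sylvester inverse. The required tools all appear in the proofs of Lemma~\ref{thm3} and Lemma~\ref{thm6}, but care is needed to apply the diagonal/off-diagonal modulus inequalities at the level of the tensor-product Sylvester operator rather than at the level of $A_1,D_1$ individually, since the hypothesis of Theorem~\ref{thm8} is cast directly at the tensor level.
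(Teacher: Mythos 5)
Your proposal is correct and follows the same overall skeleton as the paper (majorize the complex iteration~(\ref{e14}) by the nonnegative iteration for the MARE~(\ref{e7}) built from $(A_1)_\omega$, $|A_2|$, $(D_1)_\omega$, $|D_2|$, with the Kronecker-level modulus bound $|(I\otimes A_1+D_1^T\otimes I)^{-1}|\leq (I\otimes (A_1)_{\omega}+((D_1)_{\omega})^T\otimes I)^{-1}$ obtained exactly as you describe), but it deviates in two genuine ways. First, you run the induction on the errors, proving $|\Phi_k|\leq\widetilde{\Phi}_k$ and $|\Phi-\Phi_k|\leq\widetilde{\Phi}-\widetilde{\Phi}_k$ simultaneously, using $|\Phi|\leq\widetilde{\Phi}$ from Theorem~\ref{thm5}; the paper instead bounds consecutive increments, $|\Phi_{k+1}-\Phi_k|\leq\widetilde{\Phi}_{k+1}-\widetilde{\Phi}_k$, as in Lemma~\ref{thm6}, and then identifies the limit with $\Phi$. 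Your variant is arguably cleaner: it pins the limit to the extremal solution from the start and yields the global bound $|\Phi-\Phi_k|\leq\widetilde{\Phi}-\widetilde{\Phi}_k$ directly. Second, for the rate you simply inherit (R-)linear convergence from the majorant iteration, citing the literature, whereas the paper derives the asymptotic factor of the complex iteration itself, $\limsup_k\sqrt[k]{\|\Phi_k-\Phi\|}=\rho((I\otimes A_1+D_1^T\otimes I)^{-1}(I\otimes(A_2+\Phi C)+(D_2+C\Phi)^T\otimes I))$, and shows it is less than one by comparing with the splitting $M_{\widetilde{\Phi}}=(I\otimes (A_1)_{\omega}+((D_1)_{\omega})^T\otimes I)-(I\otimes(|A_2|+\widetilde{\Phi}\widetilde{C})+(|D_2|+\widetilde{C}\widetilde{\Phi})^T\otimes I)$, which is a regular splitting of the nonsingular M-matrix $M_{\widetilde{\Phi}}$ — sharper information than the inherited bound. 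Two small repairs would make your version self-contained: the linear rate of the tilde iteration is exactly this regular-splitting argument (which uses the hypothesis that $I\otimes (A_1)_{\omega}+((D_1)_{\omega})^T\otimes I$ is a nonsingular M-matrix), so either reproduce it or cite the correct source, namely Guo and Laub \citep{Axe13} rather than \cite{Axe3}; and note that the termwise majorization also needs $\widetilde{\Phi}_k\leq\widetilde{\Phi}$ (monotonicity of the majorant iterates), which that same reference supplies.
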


\begin{proof}
Consider the fixed-point iteration
   \begin{eqnarray}\label{e15}
   \begin{aligned}
     \widetilde{\Phi}_0=0,~~(A_1)_{\omega}\widetilde{\Phi}_{k+1}+\widetilde{\Phi}_{k+1}(D_1)_{\omega}=\widetilde{B}+|A_2|\widetilde{\Phi}_k+\widetilde{\Phi}_k|D_2|+\widetilde{\Phi}_k\widetilde{C}\widetilde{\Phi}_k,~k\geq0,
   \end{aligned}
   \end{eqnarray}
applied to the NARE (\ref{e7}) in class $\mathbb{M}$. It is well-known that $\widetilde{\Phi}_k\leq \widetilde{\Phi}_{k+1}$ for each nonnegative integer $k$ and ${\rm lim}_{k\rightarrow\infty}\widetilde{\Phi}_k=\widetilde{\Phi}$,
where $\widetilde{\Phi}$ is the minimal nonnegative solution of the NARE (\ref{e7}) in class $\mathbb{M}$  \citep [see.] []{Axe13}. For $k=0$, we have
$$A_1\Phi_1+\Phi_1D_1=B,~~~~(A_1)_{\omega}\widetilde{\Phi}_1+\widetilde{\Phi}_1 (D_1)_{\omega}=\widetilde{B}.$$
We can prove that
   $$|(I\otimes A_1+D_1^T\otimes I)^{-1}|\leq (I\otimes (A_1)_{\omega}+((D_1)_{\omega})^T\otimes I)^{-1}$$
by verifying
   \begin{eqnarray*}
   \begin{aligned}
   |{\rm diag}(I\otimes A_1+D_1^T\otimes I)|&\geq {\rm diag}(I\otimes (A_1)_{\omega}+((D_1)_{\omega})^T\otimes I),\\
   |{\rm offdiag}(I\otimes A_1+D_1^T\otimes I)|&\leq |{\rm offdiag}(I\otimes (A_1)_{\omega}+((D_1)_{\omega})^T\otimes I)|,
   \end{aligned}
   \end{eqnarray*}
which results in $|\Phi_1|\leq\widetilde{\Phi}_1$. For $k\geq1$, it follows from the two iterations (\ref{e14}) and (\ref{e15}) that
   \begin{eqnarray*}
   \begin{aligned}
   &~~~~A_1(\Phi_{k+1}-\Phi_k)+(\Phi_{k+1}-\Phi_k)D_1\\
   &=A_2(\Phi_{k}-\Phi_{k-1})+(\Phi_{k}-\Phi_{k-1})D_2+\Phi_kC(\Phi_{k}-\Phi_{k-1})+(\Phi_{k}-\Phi_{k-1})C\Phi_{k-1},\\
   &~~~~(A_1)_{\omega}(\widetilde{\Phi}_{k+1}-\widetilde{\Phi}_k)+(\widetilde{\Phi}_{k+1}-\widetilde{\Phi}_k)((D_1)_{\omega})^T\\
   &=|A_2|(\widetilde{\Phi}_{k}-\widetilde{\Phi}_{k-1})
   +(\widetilde{\Phi}_{k}-\widetilde{\Phi}_{k-1})|D_2|+\widetilde{\Phi}_k\widetilde{C}(\widetilde{\Phi}_{k}-\widetilde{\Phi}_{k-1})+(\widetilde{\Phi}_{k}-\widetilde{\Phi}_{k-1})\widetilde{C}\widetilde{\Phi}_{k-1}.
   \end{aligned}
   \end{eqnarray*}
Using the similar way that we prove Lemma \ref{thm6}, we can show inductively that
   $$|\Phi_{k+1}-\Phi_k|\leq\widetilde{ \Phi}_{k+1}-\widetilde{\Phi}_k,~~k\geq 0,$$
and
   $${\rm lim}_{k\rightarrow \infty }\Phi_k=\Phi.$$

Next, we consider the convergence speed of the sequence $\{\Phi_k\}$. Similar to the proof of Theorem 3.2 given by \cite{Axe13}, we can prove that, for the fixed-point iteration (\ref{e14}) with $\Phi_0=0$,
   $${\rm lim}_{k\rightarrow \infty}{\rm sup}\sqrt[k]{\parallel \Phi_k-\Phi\parallel}=\rho((I\otimes A_1+D_1^T\otimes I)^{-1}(I\otimes(A_2+\Phi C)+(D_2+C\Phi)^T\otimes I)).$$
Because
   $$M_{\widetilde{\Phi}}=(I\otimes (A_1)_{\omega}+((D_1)_{\omega})^T\otimes I)-(I\otimes(|A_2|+\widetilde{\Phi} \widetilde{C})+(|D_2|+\widetilde{C}\widetilde{\Phi})^T\otimes I)$$
is a regular splitting of a nonsingular M-matrix $M_{\widetilde{\Phi}}=I\otimes(\widetilde{A}-\widetilde{\Phi} \widetilde{C})+(\widetilde{D}-\widetilde{C}\widetilde{\Phi})^T\otimes I$,
   $$\rho((I\otimes (A_1)_{\omega}+((D_1)_{\omega})^T\otimes I)^{-1}(I\otimes(|A_2|+\widetilde{\Phi} \widetilde{C})+(|D_2|+\widetilde{C}\widetilde{\Phi})^T\otimes I))<1.$$
Because
   \begin{eqnarray*}
   \begin{aligned}
   &~~~~\rho((I\otimes A_1+D_1^T\otimes I)^{-1}(I\otimes(A_2+\Phi C)+(D_2+C\Phi)^T\otimes I))\\
   &\leq\rho((I\otimes (A_1)_{\omega}+((D_1)_{\omega})^T\otimes I)^{-1}|I\otimes(|A_2|+|\Phi| |C|)+(|D_2|+|C||\Phi|)^T\otimes I|)\\
   &\leq\rho((I\otimes (A_1)_{\omega}+((D_1)_{\omega})^T\otimes I)^{-1}((I\otimes(|A_2|+\widetilde{\Phi} \widetilde{C})+(|D_2|+\widetilde{C}\widetilde{\Phi})^T\otimes I)))\\
   &<1,
   \end{aligned}
   \end{eqnarray*}
the sequence $\{\Phi_k\}$ is linearly convergent to the extremal solution $\Phi$.
\end{proof}

Let $A=H_A-L_A-U_A$ and $D=H_D-L_D-U_D$, where $H_A$ and $H_D$ are the diagonal parts, $-L_A$ and $-L_D$ are the strictly lower triangular parts, $-U_A$ and $-U_D$ are the strictly upper triangular parts, of $A$ and $D$, respectively. Then the following fourteen splittings for $A=A_1-A_2$ and $D=D_1-D_2$ obviously satisfy the conditions on splittings in Theorem \ref{thm8}:

(\uppercase\expandafter{\romannumeral1}) Trivial splitting:
 \begin{eqnarray*}
\begin{aligned}
&({\rm \romannumeral1})&A_1=A,~A_2=0,~D_1=D,~D_2=0,
\end{aligned}
\end{eqnarray*}
and we call the correspondingly resulted fixed-point iteration as trivial fixed-point (TFP) iteration.

(\uppercase\expandafter{\romannumeral2}) Jacobi-type splitting:
 \begin{eqnarray*}
\begin{aligned}
&({\rm \romannumeral2})&A_1=H_A,~A_2=L_A+U_A,~D_1=H_D,~D_2=L_D+U_D,
\end{aligned}
\end{eqnarray*}
and we call the correspondingly resulted fixed-point iteration as Jacobi-type fixed-point (JFP) iteration.

(\uppercase\expandafter{\romannumeral3}) Gauss-Seidel-type splittings:
\begin{eqnarray*}
\begin{aligned}
&({\rm \romannumeral3})&A_1=H_A-U_A,~A_2=L_A,~D_1=H_D-L_A,~D_2=U_D;\\
&({\rm \romannumeral4})&A_1=H_A-U_A,~A_2=L_A,~D_1=H_D-U_A,~D_2=L_D;\\
&({\rm \romannumeral5})&A_1=H_A-L_A,~A_2=U_A,~D_1=H_D-L_D,~D_2=U_D;\\
&({\rm \romannumeral6})&A_1=H_A-L_A,~A_2=U_A,~D_1=H_D-U_D,~D_2=L_D;\\
\end{aligned}
\end{eqnarray*}
and we call the correspondingly resulted fixed-point iterations as Gauss-Seidel-type fixed-point (GSFP) iterations.

(\uppercase\expandafter{\romannumeral4}) SOR-type splittings ($\omega$ is a positive parameter):
\begin{eqnarray*}
\begin{aligned}
&({\rm \romannumeral7})&A_1&=(1/\omega)H_A-U_A,&A_2&=(1/\omega-1)H_A+L_A,\\
&&D_1&=(1/\omega)H_D-L_D,&D_2&=(1/\omega-1)H_D+U_D;
\\
&({\rm \romannumeral8})&A_1&=(1/\omega)H_A-U_A,&A_2&=(1/\omega-1)H_A+L_A,\\ &&D_1&=(1/\omega)H_D-U_D,&D_2&=(1/\omega-1)H_D+L_D;\\
&({\rm \romannumeral9})&A_1&=(1/\omega)H_A-L_A,&A_2&=(1/\omega-1)H_A+U_A,\\ &&D_1&=(1/\omega)H_D-L_D,&D_2&=(1/\omega-1)H_D+U_D;\\
&({\rm \romannumeral10})&A_1&=(1/\omega)H_A-L_A,&A_2&=(1/\omega-1)H_A+U_A,\\
&&D_1&=(1/\omega)H_D-U_D,&D_2&=(1/\omega-1)H_D+L_D;
\end{aligned}
\end{eqnarray*}
and we call the correspondingly resulted fixed-point iterations as SOR-type fixed-point (SORFP) iterations.

(\uppercase\expandafter{\romannumeral5}) AOR-type splittings ($\omega$ and $\gamma$ are two positive parameters):
\begin{eqnarray*}
\begin{aligned}
&({\rm \romannumeral11})&A_1&=(1/\omega)(H_A-\gamma U_A),&A_2&=(1/\omega)((1-\omega)H_A+(\omega-\gamma)U_A+\omega L_A),\\
&&D_1&=(1/\omega)(H_D-\gamma L_D),&D_2&=(1/\omega)((1-\omega)H_D+(\omega-\gamma)L_D+\omega U_D);\\
&({\rm \romannumeral12})&A_1&=(1/\omega)(H_A-\gamma U_A),&A_2&=(1/\omega)((1-\omega)H_A+(\omega-\gamma)U_A+\omega L_A),\\
&&D_1&=(1/\omega)(H_D-\gamma U_D),&D_2&=(1/\omega)((1-\omega)H_D+(\omega-\gamma)U_D+\omega L_D);\\
&({\rm \romannumeral13})&A_1&=(1/\omega)(H_A-\gamma L_A),&A_2&=(1/\omega)((1-\omega)H_A+(\omega-\gamma)L_A+\omega U_A),\\
&&D_1&=(1/\omega)(H_D-\gamma U_D),&D_2&=(1/\omega)((1-\omega)H_D+(\omega-\gamma)U_D+\omega L_D);\\
&({\rm \romannumeral14})&A_1&=(1/\omega)(H_A-\gamma L_A),&A_2&=(1/\omega)((1-\omega)H_A+(\omega-\gamma)L_A+\omega U_A),\\
&&D_1&=(1/\omega)(H_D-\gamma L_D),&D_2&=(1/\omega)((1-\omega)H_D+(\omega-\gamma)L_D+\omega U_D);
\end{aligned}
\end{eqnarray*}
and we call the correspondingly resulted fixed-point iterations as AOR-type fixed-point (AORFP) iterations; Evidently, JFP, GSFP and SORFP can be considered as special cases of AORFP when the iteration parameters $(\omega, \gamma)$ are specified to be $(1,0), (1,1)$ and $(\omega,\omega)$, respectively.

\section{Apply doubling algorithms to solve the NARE (\ref{e1}) in class $\mathbb{H}^\omega$}
ADDA and SDA are two existing doubling algorithms which have been successfully applied to the NAREs in class $\mathbb{M}$, $\mathbb{H}^+$, $\mathbb{H}^-$ and $\mathbb{H}^*$. This section discusses how to choose suitable parameters to make them also deliver the extremal solution $\Phi$ of the NARE (\ref{e1}) in class $\mathbb{H}^\omega$, and analyzes the convergence of the two doubling algorithms.

\subsection{{\rm General framework of doubling algorithms for NAREs}}
Let $\Phi$ and $\Psi$ be the solutions of the NAREs (\ref{e1}) and (\ref{e3}), respectively. Then
   \begin{eqnarray}\label{e16}
     H\left(
     \begin{matrix}
     I\\
     \Phi
     \end{matrix}
     \right)
     =
     \left(
     \begin{matrix}
     I\\
     \Phi
     \end{matrix}
     \right)R,~~
     H\left(
     \begin{matrix}
     \Psi\\
     I
     \end{matrix}
     \right)=\left(
     \begin{matrix}
     \Psi\\
     I
     \end{matrix}
     \right)(-S),
   \end{eqnarray}
where
   \begin{eqnarray}\label{e17}
     R=D-C\Phi,~~S=A-B\Psi.
   \end{eqnarray}
First choose some suitable parameters and transform (\ref{e16}) into
   \begin{eqnarray}\label{e18}
     \left(
     \begin{matrix}
     E_0&0\\
     -H_0&I
     \end{matrix}
     \right)\left(
     \begin{matrix}
     I\\
     \Phi
     \end{matrix}
     \right)
     =
     \left(
     \begin{matrix}
     I&-G_0\\
     0&F_0
     \end{matrix}
     \right)\left(
     \begin{matrix}
     I\\
     \Phi
     \end{matrix}
     \right)\mathcal{R},~~
     \left(
     \begin{matrix}
     E_0&0\\
     -H_0&I
     \end{matrix}
     \right)\left(
     \begin{matrix}
     \Psi\\
     I
     \end{matrix}
     \right)\mathcal{S}
     =
     \left(
     \begin{matrix}
     I&-G_0\\
     0&F_0
     \end{matrix}
     \right)\left(
     \begin{matrix}
     \Psi\\
     I
     \end{matrix}
     \right).
   \end{eqnarray}
Then construct a sequence of quaternion $\{E_k,F_k,G_k,H_k\},k=0,1,2,\cdots,$ such that
   \begin{eqnarray}\label{e19}
     \left(
     \begin{matrix}
     E_k&0\\
     -H_k&I
     \end{matrix}
     \right)\left(
     \begin{matrix}
     I\\
     \Phi
     \end{matrix}
     \right)=\left(
     \begin{matrix}
     I&-G_k\\
     0&F_k
     \end{matrix}
     \right)
     \left(
     \begin{matrix}
     I\\
     \Phi
     \end{matrix}
     \right)
     \mathcal{R}^{2^k},~
     \left(
     \begin{matrix}
     E_k&0\\
     -H_k&I
     \end{matrix}
     \right)
     \left(
     \begin{matrix}
     \Psi\\
     I
     \end{matrix}
     \right)
     \mathcal{S}^{2^k}
     =
     \left(
     \begin{matrix}
     I&-G_k\\
     0&F_k
     \end{matrix}
     \right)\left(
     \begin{matrix}
     \Psi\\
     I
     \end{matrix}
     \right).
   \end{eqnarray}
It can be verified that
   \begin{eqnarray}\label{e20}
     \Phi-H_k=(I-H_k\Psi)\mathcal{S}^{2^k}\Phi \mathcal{R}^{2^k},~ \Psi-G_k=(I-G_k\Phi)\mathcal{R}^{2^k}\Psi \mathcal{S}^{2^k}.
   \end{eqnarray}
If $H_k$ and $G_k$ are uniformly bounded with respect to $k$, then it was shown by \cite{Axe14} that for any consistent matrix norm $\parallel\cdot\parallel$,
   \begin{eqnarray*}
     \begin{aligned}
      &\lim_{k\rightarrow \infty}{\rm sup}\parallel\Phi-H_k\parallel^{1/2^k}\leq \rho(\mathcal{R})\rho(\mathcal{S}),\\
      &\lim_{k\rightarrow \infty}{\rm sup} \parallel\Psi-G_k\parallel^{1/2^k}\leq \rho(\mathcal{R})\rho(\mathcal{S}),
     \end{aligned}
   \end{eqnarray*}
which implies the sequences $\{H_k\}$ and $\{G_k\}$ converge quadratically to $\Phi$ and $\Psi$, respectively, if
   \begin{eqnarray}\label{e21}
     \rho(\mathcal{R})\rho(\mathcal{S})<1.
   \end{eqnarray}

\subsection{Doubling algorithms for the NARE (\ref{e7}) in class $\mathbb{M}$}

In this subsection, for subsequent convenience, we give a iterative scheme for the NARE (\ref{e7}) in class $\mathbb{M}$ which is slightly different from that proposed by \cite{Axe1}. In order to distinguish from the doubling algorithms for the NARE (\ref{e1}) in class $\mathbb{H}^\omega$, we use different notations to denote all involved matrices in ADDA, such as $\widetilde{\mathcal{R}}$ instead of $\mathcal{R}$.

Let $\widetilde{R}=\widetilde{D}-\widetilde{C}\widetilde{\Phi}$ and $\widetilde{S}=\widetilde{A}-\widetilde{B}\widetilde{\Psi}$, we first choose suitable complex parameters $\widetilde{\alpha}$, $\widetilde{\beta}$ such that
   $$\widetilde{R}+(\omega {\rm Re}(\widetilde{\alpha})+(1-\omega){\rm Im}(\widetilde{\alpha})) I,~~\widetilde{S}+(\omega {\rm Re}(\widetilde{\beta})+(1-\omega){\rm Im}(\widetilde{\beta})) I,$$
   $$\widetilde{A}_{\widetilde{\beta}}=\widetilde{A}+(\omega {\rm Re}(\widetilde{\beta})+(1-\omega){\rm Im}(\widetilde{\beta})) I,~~\widetilde{D}_{\widetilde{\alpha}}=\widetilde{D}+(\omega {\rm Re}(\widetilde{\alpha})+(1-\omega){\rm Im}(\widetilde{\alpha})) I,$$
   and
   $$\widetilde{W}_{\widetilde{\alpha} \widetilde{\beta}}=\widetilde{A}_{\widetilde{\beta}}-\widetilde{B}\widetilde{D}_{\widetilde{\alpha}}^{-1}\widetilde{C},~~~\widetilde{V}_{\widetilde{\alpha} \widetilde{\beta}}=\widetilde{D}_{\widetilde{\alpha}}-\widetilde{C}\widetilde{A}_{\widetilde{\beta}}^{-1}\widetilde{B}$$
are nonsingular. Then the constructions of  the matrices $\widetilde{\mathcal{R}}$ and $\widetilde{\mathcal{S}}$ and initial matrices $\widetilde{E}_0$, $\widetilde{F}_0$, $\widetilde{G}_0$, $\widetilde{H}_0$ in ADDA are as follows.
   \begin{eqnarray}\label{e22}
     \begin{aligned}
     \widetilde{\mathcal{R}}&=(\widetilde{R}-(\omega {\rm Re}(\widetilde{\beta})+(1-\omega){\rm Im}(\widetilde{\beta})) I)(\widetilde{R}+(\omega {\rm Re}(\widetilde{\alpha})+(1-\omega){\rm Im}(\widetilde{\alpha})) I)^{-1},\\
     \widetilde{ \mathcal{S}}&=(\widetilde{S}-(\omega {\rm Re}(\widetilde{\alpha})+(1-\omega){\rm Im}(\widetilde{\alpha})) I)(\widetilde{S}+(\omega {\rm Re}(\widetilde{\beta})+(1-\omega){\rm Im}(\widetilde{\beta})) I)^{-1},\\
     \widetilde{E}_0&=I-((\omega {\rm Re}(\widetilde{\alpha})+(1-\omega){\rm Im}(\widetilde{\alpha}))+(\omega {\rm Re}(\widetilde{\beta})+(1-\omega){\rm Im}(\widetilde{\beta})))\widetilde{V}_{\widetilde{\alpha}\widetilde{\beta}}^{-1},\\
     \widetilde{F}_0&=I-((\omega {\rm Re}(\widetilde{\alpha})+(1-\omega){\rm Im}(\widetilde{\alpha}))+(\omega {\rm Re}(\widetilde{\beta})+(1-\omega){\rm Im}(\widetilde{\beta})))\widetilde{W}_{\widetilde{\alpha}\widetilde{\beta}}^{-1},\\
     \widetilde{G}_0&=((\omega {\rm Re}(\widetilde{\alpha})+(1-\omega){\rm Im}(\widetilde{\alpha}))+(\omega {\rm Re}(\widetilde{\beta})+(1-\omega){\rm Im}(\widetilde{\beta})))\widetilde{D}_{\widetilde{\alpha}}^{-1}\widetilde{C}\widetilde{W}_{\widetilde{\alpha}\widetilde{\beta}}^{-1},\\
     \widetilde{H}_0&=((\omega {\rm Re}(\widetilde{\alpha})+(1-\omega){\rm Im}(\widetilde{\alpha}))+(\omega {\rm Re}(\widetilde{\beta})+(1-\omega){\rm Im}(\widetilde{\beta})))\widetilde{W}_{\widetilde{\alpha}\widetilde{\beta}}^{-1}\widetilde{B}\widetilde{D}_{\widetilde{\alpha}}^{-1}.
    \end{aligned}
   \end{eqnarray}
After $\widetilde{E}_0$, $\widetilde{F}_0$, $\widetilde{G}_0$ and $\widetilde{H}_0$ are set, the sequence $\{\widetilde{E}_k,\widetilde{F}_k,\widetilde{G}_k,\widetilde{H}_k\},k=0,1,2,\cdots,$ is produced with the iterative scheme:
\begin{eqnarray}\label{e23}
     \begin{aligned}
      \widetilde{E}_{k+1}&=\widetilde{E}_k(I-\widetilde{G}_k\widetilde{H}_k)^{-1}\widetilde{E}_k,\\
      \widetilde{F}_{k+1}&=\widetilde{F}_k(I-\widetilde{H}_k\widetilde{G}_k)^{-1}\widetilde{F}_k,\\
      \widetilde{G}_{k+1}&=\widetilde{G}_k+\widetilde{E}_k(I-\widetilde{G}_k\widetilde{H}_k)^{-1}\widetilde{G}_k\widetilde{F}_k,\\
      \widetilde{H}_{k+1}&=\widetilde{H}_k+\widetilde{F}_k(I-\widetilde{H}_k\widetilde{G}_k)^{-1}\widetilde{H}_k\widetilde{E}_k,
     \end{aligned}
\end{eqnarray}
as long as the matrices $I-\widetilde{G}_k\widetilde{H}_k$ and $I-\widetilde{H}_k\widetilde{G}_k$ are invertible for all $k$.

\begin{theorem}\label{thm10}
{\rm (\cite[see.][]{Axe9}.} Let $\widetilde{Q}$ in {\rm(\ref{e8})} be a nonsingular M-matrix. Let $\widetilde{\Phi}$ and $\widetilde{\Psi}$ be the minimal nonnegative solutions of the NARE {\rm(\ref{e7})} and its dual NARE {\rm(\ref{e9})}, respectively. Let the sequences $\{\widetilde{E}_k\}$, $\{\widetilde{F}_k\}$, $\{\widetilde{G}_k\}$ and $\{\widetilde{H}_k\}$ be generated by ADDA applied to the NARE {\rm(\ref{e7})} with the parameters $\widetilde{\alpha}$, $\widetilde{\beta}$ satisfying
   $$\omega {\rm Re}(\widetilde{\alpha})+(1-\omega){\rm Im}(\widetilde{\alpha}) \geq \max_{i=1,2,\cdots,m}[\widetilde{A}]_{ii}=\max_{i=n+1,n+2,\cdots,m+n}[\widetilde{Q}]_{ii},$$
   $$\omega {\rm Re}(\widetilde{\beta})+(1-\omega){\rm Im}(\widetilde{\beta}) \geq \max_{i=1,2,\cdots,n}[\widetilde{D}]_{ii}=\max_{i=1,2,\cdots,n}[\widetilde{Q}]_{ii},$$
then

{\rm(1)} $\rho(\widetilde{\mathcal{R}})\rho(\widetilde{\mathcal{S}})<1$;

{\rm(2)} $\widetilde{E}_0$, $\widetilde{F}_0\leq 0$; $\widetilde{E}_k$, $\widetilde{F}_k\geq 0$, for $k\geq 1$;

{\rm(3)} $I-\widetilde{G}_k\widetilde{H}_k$ and $I-\widetilde{H}_k\widetilde{G}_k$ are nonsingular M-matrices;

{\rm(4)} $0\leq \widetilde{H}_k\leq \widetilde{H}_{k+1}\leq \widetilde{\Phi}$, $0\leq \widetilde{G}_k\leq \widetilde{G}_{k+1}\leq \widetilde{\Psi}$, and
   $$\lim_{k\rightarrow \infty}\widetilde{H}_k=\widetilde{\Phi},~~~\lim_{k\rightarrow\infty}\widetilde{G}_{k}=\widetilde{\Psi}.$$
\end{theorem}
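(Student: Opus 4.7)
The plan is to observe that Theorem \ref{thm10} is essentially a restatement of the classical ADDA convergence theorem proved in \cite{Axe9}, and that the statement reduces to that classical result by a change of parameters. In the standard formulation, ADDA is parameterized by a pair of real positive scalars $(\alpha,\beta)$. Here the parameters $\widetilde{\alpha}, \widetilde{\beta}$ are complex, but a glance at the formulas (\ref{e22}) shows that they enter the definitions of $\widetilde{\mathcal{R}}$, $\widetilde{\mathcal{S}}$, $\widetilde{E}_0$, $\widetilde{F}_0$, $\widetilde{G}_0$, $\widetilde{H}_0$ only through the real scalar combinations
\[
\widetilde{\alpha}' \;:=\; \omega\,{\rm Re}(\widetilde{\alpha})+(1-\omega)\,{\rm Im}(\widetilde{\alpha}),\qquad
\widetilde{\beta}' \;:=\; \omega\,{\rm Re}(\widetilde{\beta})+(1-\omega)\,{\rm Im}(\widetilde{\beta}).
\]
So the first step is to introduce these real scalars and rewrite the initialization (\ref{e22}) as the classical ADDA initialization with real parameters $(\widetilde{\alpha}', \widetilde{\beta}')$, where for instance $\widetilde{A}_{\widetilde{\beta}} = \widetilde{A} + \widetilde{\beta}' I$ and $\widetilde{W}_{\widetilde{\alpha}\widetilde{\beta}} = \widetilde{A}_{\widetilde{\beta}} - \widetilde{B}\widetilde{D}_{\widetilde{\alpha}}^{-1}\widetilde{C}$, exactly as in the usual ADDA for MAREs.

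Second, since the recursive update (\ref{e23}) involves only $\widetilde{E}_k, \widetilde{F}_k, \widetilde{G}_k, \widetilde{H}_k$ and does not re-use $\widetilde{\alpha}$ or $\widetilde{\beta}$ at all, the entire sequence $\{\widetilde{E}_k,\widetilde{F}_k,\widetilde{G}_k,\widetilde{H}_k\}$ produced here coincides, matrix by matrix, with the sequence that the classical real-parameter ADDA of \cite{Axe9} generates when run on the MARE (\ref{e7}) with the real parameter pair $(\widetilde{\alpha}', \widetilde{\beta}')$. Moreover, the hypotheses
\[
\widetilde{\alpha}' \;\geq\; \max_{1\leq i\leq m}[\widetilde{A}]_{ii},\qquad
\widetilde{\beta}' \;\geq\; \max_{1\leq i\leq n}[\widetilde{D}]_{ii}
\]
are exactly the parameter conditions required by the classical ADDA convergence theorem for nonsingular M-matrix Riccati equations; together with the assumption that $\widetilde{Q}$ is a nonsingular M-matrix, these furnish every input needed by the cited theorem.

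Third, invoking the classical ADDA convergence theorem of \cite{Axe9} in the form it is proved there immediately delivers conclusions (1)–(4): the spectral radius bound $\rho(\widetilde{\mathcal{R}})\rho(\widetilde{\mathcal{S}})<1$ is the spectral statement proved at the start of that analysis; the sign pattern $\widetilde{E}_0, \widetilde{F}_0 \leq 0$ together with $\widetilde{E}_k, \widetilde{F}_k \geq 0$ for $k\geq 1$ and the M-matrix property of $I-\widetilde{G}_k\widetilde{H}_k$ and $I-\widetilde{H}_k\widetilde{G}_k$ are the standard invariants that drive ADDA; and the monotone quadratic convergence $\widetilde{H}_k \uparrow \widetilde{\Phi}$, $\widetilde{G}_k \uparrow \widetilde{\Psi}$ is the principal conclusion of the cited theorem. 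Thus there is no new analytical obstacle here — the only work is the bookkeeping verification that the complex-parameter initialization (\ref{e22}) really does depend on $\widetilde{\alpha}, \widetilde{\beta}$ solely through the two real scalars $\widetilde{\alpha}', \widetilde{\beta}'$, so that the cited theorem applies verbatim.
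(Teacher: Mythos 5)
Your proposal is correct and matches the paper's treatment: the paper gives no independent proof of this theorem, citing it directly from the ADDA reference, which is legitimate precisely because of the observation you make explicit — the complex parameters $\widetilde{\alpha},\widetilde{\beta}$ enter (\ref{e22}) only through the real scalars $\omega\,{\rm Re}(\cdot)+(1-\omega)\,{\rm Im}(\cdot)$, so the scheme is the classical real-parameter ADDA with those scalars, and the stated hypotheses are exactly the classical parameter conditions. Your write-up just spells out the bookkeeping the paper leaves implicit, so nothing further is needed.
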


Notice that ADDA is reduced to SDA when $\widetilde{\alpha}=\widetilde{\beta}$.

\subsection{Doubling algorithms for the NARE (\ref{e1}) in class $\mathbb{H}^\omega$}

We choose suitable complex parameters $\alpha$, $\beta$ such that
   $$R+\alpha I,~~S+\beta I,~~A_{\beta}=A+\beta I,~~D_{\alpha}=D+\alpha I,$$
   and
   $$W_{\alpha \beta}=A_{\beta}-BD_{\alpha}^{-1}C,~~~V_{\alpha \beta}=D_{\alpha}-CA_{\beta}^{-1}B$$
are nonsingular. Then the constructions of $\mathcal{R}$, $\mathcal{S}$, $E_0$, $F_0$, $G_0$ and $H_0$ in ADDA are as follows.
   \begin{eqnarray}\label{e24}
     \begin{aligned}
     \mathcal{R}&=(R-\beta I)(R+\alpha I)^{-1},& \mathcal{S}&=(S-\alpha I)(S+\beta I)^{-1},\\
     E_0&=I-(\alpha+\beta)V_{\alpha\beta}^{-1},&F_0&=I-(\alpha+\beta)W_{\alpha\beta}^{-1},\\
     G_0&=(\alpha+\beta)D_{\alpha}^{-1}CW_{\alpha\beta}^{-1},&H_0&=(\alpha+\beta)W_{\alpha\beta}^{-1}BD_{\alpha}^{-1}.
    \end{aligned}
   \end{eqnarray}
After $E_0,F_0,G_0$ and $H_0$ are set, the sequence $\{E_k,F_k,G_k,H_k\},k=0,1,2,\cdots,$ is produced with the iterative scheme:
   \begin{eqnarray}\label{e23}
     \begin{aligned}
      E_{k+1}&=E_k(I-G_kH_k)^{-1}E_k,\\
      F_{k+1}&=F_k(I-H_kG_k)^{-1}F_k,\\
      G_{k+1}&=G_k+E_k(I-G_kH_k)^{-1}G_kF_k,\\
      H_{k+1}&=H_k+F_k(I-H_kG_k)^{-1}H_kE_k,
     \end{aligned}
   \end{eqnarray}
as long as the matrices $I-G_kH_k$ and $I-H_kG_k$ are invertible for all $k$.

\begin{lemma}\label{thm11}
Let $A\in \mathbb{R}^{n\times n}$ be a nonsingular M-matrix and be written as $A=D_1-N_1$, where $D_{1}$ is diagonal with positive diagonal entries and $N_1\geq 0$. Let $B\in \mathbb{C}^{n\times n}$ be written as $B=D_2-N_2$, where $D_2$ is diagonal. If
$$D_1\leq |D_2|,~~~|N_2|\leq N_1,$$
then $B$ is nonsingular with $|B^{-1}|\leq A^{-1}C$ and $|B^{-1}|\leq CA^{-1}$, where $C=|D_1D_2^{-1}|$.
\end{lemma}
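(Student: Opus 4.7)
The plan is to prove nonsingularity of $B$ and both bounds by factoring $B$ as $D_2(I-D_2^{-1}N_2)$ (for the first bound) and as $(I-N_2D_2^{-1})D_2$ (for the second bound), and then dominating the Neumann series of the $I-(\cdot)$ factor by the corresponding series for $A$. Because $D_1$ is diagonal with positive entries and $D_1\le|D_2|$, the matrix $D_2$ is diagonal and invertible with $|D_2^{-1}|\le D_1^{-1}$ entrywise (both are diagonal, so this is just a scalar comparison on each diagonal entry).

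First I would handle the key spectral step. Since $A=D_1-N_1$ is a nonsingular M-matrix with $D_1>0$ diagonal and $N_1\ge0$, we have $\rho(D_1^{-1}N_1)<1$ and likewise $\rho(N_1D_1^{-1})<1$. From $|D_2^{-1}|\le D_1^{-1}$ and $|N_2|\le N_1$ it follows entrywise that
\begin{equation*}
|D_2^{-1}N_2|\le|D_2^{-1}|\,|N_2|\le D_1^{-1}N_1,\qquad |N_2D_2^{-1}|\le N_1D_1^{-1},
\end{equation*}
so by the Perron--Frobenius monotonicity of spectral radius on nonnegative matrices, $\rho(D_2^{-1}N_2)<1$ and $\rho(N_2D_2^{-1})<1$. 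Hence $I-D_2^{-1}N_2$ and $I-N_2D_2^{-1}$ are invertible, so $B$ is nonsingular, and the Neumann series $\sum_k(D_2^{-1}N_2)^k$ and $\sum_k(N_2D_2^{-1})^k$ converge. Taking moduli termwise yields
\begin{equation*}
|(I-D_2^{-1}N_2)^{-1}|\le(I-D_1^{-1}N_1)^{-1},\qquad |(I-N_2D_2^{-1})^{-1}|\le(I-N_1D_1^{-1})^{-1}.
\end{equation*}

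Next I would combine these with the two factorizations. For the first bound, $B^{-1}=(I-D_2^{-1}N_2)^{-1}D_2^{-1}$, so
\begin{equation*}
|B^{-1}|\le(I-D_1^{-1}N_1)^{-1}|D_2^{-1}|.
\end{equation*}
Writing $A^{-1}=(I-D_1^{-1}N_1)^{-1}D_1^{-1}$ and using that $D_1$, $D_2$ (hence $|D_2^{-1}|$) are diagonal and commute, the right-hand side equals $(I-D_1^{-1}N_1)^{-1}D_1^{-1}(D_1|D_2^{-1}|)=A^{-1}C$ with $C=|D_1D_2^{-1}|=D_1|D_2^{-1}|$, proving $|B^{-1}|\le A^{-1}C$. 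For the second bound, I would use $B=(I-N_2D_2^{-1})D_2$, giving $B^{-1}=D_2^{-1}(I-N_2D_2^{-1})^{-1}$ and hence
\begin{equation*}
|B^{-1}|\le|D_2^{-1}|(I-N_1D_1^{-1})^{-1},
\end{equation*}
and then $A^{-1}=D_1^{-1}(I-N_1D_1^{-1})^{-1}$ together with the commutativity of the diagonal factors gives $|D_2^{-1}|(I-N_1D_1^{-1})^{-1}=CA^{-1}$.

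There is really no serious obstacle here; the only thing to be careful about is bookkeeping the two different one-sided factorizations and checking that the diagonal matrices commute so that $D_1|D_2^{-1}|D_1^{-1}=|D_2^{-1}|$, which is precisely why the factor $C=|D_1D_2^{-1}|$ absorbs cleanly into either side. The spectral radius step, which would be the only potentially delicate point, is immediate from the nonnegative entrywise domination combined with the M-matrix property of $A$.
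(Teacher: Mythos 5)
Your proof is correct. Note that the paper itself states Lemma \ref{thm11} without any proof (it is imported from the literature on doubling algorithms for complex NAREs, cf.\ \cite{Axe5}), so there is no in-paper argument to compare against; judged on its own, your route is sound and complete. The two pillars both hold: (i) since $A=D_1-N_1$ is a regular splitting of a nonsingular M-matrix ($D_1^{-1}\geq 0$, $N_1\geq 0$, $A^{-1}\geq 0$), the classical regular-splitting theorem gives $\rho(D_1^{-1}N_1)<1$, and similarity gives $\rho(N_1D_1^{-1})<1$; (ii) the entrywise dominations $|D_2^{-1}|\leq D_1^{-1}$ and $|N_2|\leq N_1$ pass through the Neumann series termwise, yielding $|(I-D_2^{-1}N_2)^{-1}|\leq (I-D_1^{-1}N_1)^{-1}$ and its right-sided analogue, whence nonsingularity of $B$ and the two bounds. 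The bookkeeping with $C=|D_1D_2^{-1}|=D_1|D_2^{-1}|$ and the commutation of the diagonal factors is exactly what makes the factor $C$ absorb into $A^{-1}=(I-D_1^{-1}N_1)^{-1}D_1^{-1}=D_1^{-1}(I-N_1D_1^{-1})^{-1}$ on the correct side in each case, so both conclusions $|B^{-1}|\leq A^{-1}C$ and $|B^{-1}|\leq CA^{-1}$ follow as you wrote.
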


\begin{theorem}\label{thm12}
Let $\Phi$ and $\Psi$ be the extremal solutions of the NARE {\rm(\ref{e1})} and its dual NARE {\rm(\ref{e3})} in class $\mathbb{H}^\omega$, respectively. Suppose that $Q_{\omega}\textbf{1} > 0$. Let $\{E_k\}$, $\{F_k\}$, $\{G_k\}$, $\{H_k\}$ be generated by ADDA applied to the NARE {\rm(\ref{e1})} with the
parameters $\alpha$, $\beta$ ($\omega {\rm Re}(\alpha)+(1-\omega){\rm Im}(\alpha)>0$, $\omega {\rm Re}(\beta)+(1-\omega){\rm Im}(\beta)>0$) satisfying
   \begin{eqnarray}
     &\frac{\omega {\rm Re}(\alpha)+(1-\omega) {\rm Im}(\alpha)+q_i}{|\alpha+[Q]_{ii}|}|\beta-[Q]_{ii}|<\omega {\rm Re}(\beta)+(1-\omega) {\rm Im}(\beta)-q_i,~i=1,2,\cdots,n;~~\label{e25}\\
     &\frac{\omega {\rm Re}(\beta)+(1-\omega) {\rm Im}(\beta)+q_i}{|\beta+[Q]_{ii}|}|\alpha-[Q]_{ii}|<\omega {\rm Re}(\alpha)+(1-\omega) {\rm Im}(\alpha)-q_i,~i=n+1,n+2,\cdots,n+m.~~~~\label{e26}
   \end{eqnarray}
Then $\{H_k\}$ and $\{G_k\}$ quadratically converge to $\Phi$ and $\Psi$, respectively.
\end{theorem}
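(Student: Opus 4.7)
The plan is to apply the general convergence framework of Section 6.1. Three items need checking: (a) $\rho(\mathcal{R})\rho(\mathcal{S}) < 1$; (b) the iteration stays well-defined, i.e., $I - G_k H_k$ and $I - H_k G_k$ are invertible for every $k$; and (c) the sequences $\{H_k\}$ and $\{G_k\}$ are uniformly bounded. Once these three items are in hand, the identity (\ref{e20}) together with the estimate quoted just after it immediately yields quadratic convergence $H_k \to \Phi$ and $G_k \to \Psi$.

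First I would verify (a). By Theorem \ref{thm5}, every eigenvalue $\lambda$ of $R = D - C\Phi$ lies in the upper right of $L(z_\omega,0)$, and by Gershgorin's theorem $\lambda$ sits in a disk centred at some $[R]_{ii}$ whose radius is bounded above by the off-diagonal row sum of $R$ (controlled via the bound derived in the proof of Theorem \ref{thm5}). Hypothesis (\ref{e25}) is written in precisely the form needed to ensure that the M\"obius map $z \mapsto (z-\beta)/(z+\alpha)$ sends every such disk strictly inside the unit circle, yielding $\rho(\mathcal{R}) < 1$; the symmetric argument using (\ref{e26}) applied to $S = A - B\Psi$ and the map $z \mapsto (z-\alpha)/(z+\beta)$ gives $\rho(\mathcal{S}) < 1$.

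For (b) and (c) I would follow the template of Lemma \ref{thm6} and Theorem \ref{thm7}: compare the complex iteration with an auxiliary M-matrix iteration. Set $\widetilde{\alpha} = \omega\,{\rm Re}(\alpha) + (1-\omega)\,{\rm Im}(\alpha)$ and $\widetilde{\beta} = \omega\,{\rm Re}(\beta) + (1-\omega)\,{\rm Im}(\beta)$, both positive by the sign assumptions, and form the dominating NARE in class $\mathbb{M}$ with $\widetilde{A} = A_\omega$, $\widetilde{B} = |B|$, $\widetilde{C} = |C|$, $\widetilde{D} = D_\omega$, so that $\widetilde{Q} = Q_\omega$ is a nonsingular M-matrix with $\widetilde{Q}\textbf{1} > 0$. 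After enlarging $\widetilde{\alpha}, \widetilde{\beta}$ if necessary to satisfy the threshold condition of Theorem \ref{thm10}, the theorem supplies an auxiliary sequence $\{\widetilde{E}_k,\widetilde{F}_k,\widetilde{G}_k,\widetilde{H}_k\}$ for which $I-\widetilde{G}_k\widetilde{H}_k$ and $I-\widetilde{H}_k\widetilde{G}_k$ are nonsingular M-matrices and $0\leq\widetilde{H}_k\leq\widetilde{\Phi}$, $0\leq\widetilde{G}_k\leq\widetilde{\Psi}$ for all $k$.

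The remaining work is a simultaneous induction on $k$ showing $|H_k|\leq\widetilde{H}_k$ and $|G_k|\leq\widetilde{G}_k$, together with matching dominance bounds on $|E_k|$ and $|F_k|$ that keep the recursion going. The base step $k=0$ is the main obstacle: I expect to invoke Lemma \ref{thm11} (with a diagonal scaling that absorbs the mismatch between $|\alpha+[D]_{ii}|$ and $\widetilde{\alpha}+[D_\omega]_{ii}$, and analogously for $\beta$) to bound $|D_\alpha^{-1}|$, $|A_\beta^{-1}|$, $|V_{\alpha\beta}^{-1}|$, $|W_{\alpha\beta}^{-1}|$ by their real M-matrix counterparts, and then transfer these through (\ref{e24}) to bounds on $|E_0|,|F_0|,|G_0|,|H_0|$. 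The inductive step uses Lemma \ref{thm3} to majorise $|(I-G_kH_k)^{-1}|$ and $|(I-H_kG_k)^{-1}|$ by their M-matrix analogues, which simultaneously establishes the invertibility required for (b); the ADDA update formulas then transfer the bounds from step $k$ to step $k+1$. Uniform boundedness in (c) follows from $|H_k|\leq\widetilde{\Phi}$ and $|G_k|\leq\widetilde{\Psi}$, and combining (a) with (\ref{e20}) closes the proof.
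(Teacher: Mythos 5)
Your overall architecture (check the product spectral-radius condition, well-definedness, and boundedness, then invoke the framework of Section 6.1) matches the paper, and your base-step plan via Lemma \ref{thm11} with diagonal scalings is essentially the paper's. But there are two genuine gaps.

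First, step (a) is wrong as stated. Conditions (\ref{e25})--(\ref{e26}) do \emph{not} imply $\rho(\mathcal{R})<1$ and $\rho(\mathcal{S})<1$ separately; they only control the product. A one-dimensional example with $\omega=1$, $D=A=1$, $B=C=0.01$, $\alpha=1$, $\beta=10$ satisfies (\ref{e25})--(\ref{e26}) (the left side of (\ref{e25}) is about $4.55<9.99$, and the left side of (\ref{e26}) is essentially $0$), yet $\mathcal{R}=(R-\beta I)(R+\alpha I)^{-1}$ has modulus close to $9/2>1$. Moreover, your Gershgorin argument is applied to the wrong matrix: the disks of $R=D-C\Phi$ are centred at $[R]_{ii}=[D]_{ii}-[C\Phi]_{ii}$ with radii given by the off-diagonal row sums of $R$, whereas (\ref{e25})--(\ref{e26}) are phrased in terms of $[Q]_{ii}$ and $q_i$; bridging that gap is exactly what requires the comparison machinery. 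The paper instead proves the elementwise bounds $|\mathcal{R}|\leq -N^{-1}\widetilde{\mathcal{R}}N$ and $|\mathcal{S}|\leq -M^{-1}\widetilde{\mathcal{S}}M$ and then uses $\rho(\mathcal{R})\rho(\mathcal{S})\leq\rho(\widetilde{\mathcal{R}})\rho(\widetilde{\mathcal{S}})<1$ from Theorem \ref{thm10}(1).

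Second, in (b)--(c) your choice of the auxiliary equation with $\widetilde{Q}=Q_\omega$, ``enlarging $\widetilde{\alpha},\widetilde{\beta}$ if necessary'' to meet the threshold of Theorem \ref{thm10}, destroys the comparison: the dominance relations (\ref{e27}) between $E_0,F_0,G_0,H_0$ (built from $\alpha,\beta$ via (\ref{e24})) and $\widetilde{E}_0,\widetilde{F}_0,\widetilde{G}_0,\widetilde{H}_0$ only make sense when the auxiliary ADDA is run with the \emph{same} parameters, and the hypotheses do not guarantee $\omega\,{\rm Re}(\alpha)+(1-\omega)\,{\rm Im}(\alpha)\geq\max_i[(Q_\omega)]_{ii}$ anyway. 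The paper's key device, which your proposal misses, is to shrink the auxiliary M-matrix rather than enlarge the parameters: take $\widetilde{Q}$ with diagonal entries $q_i+\epsilon$ and off-diagonal entries $-|[Q]_{ij}|$, which satisfies $\widetilde{Q}\leq Q_\omega$, $\widetilde{Q}\mathbf{1}>0$, and for which the threshold conditions of Theorem \ref{thm10} hold automatically with $\widetilde{\alpha}=\alpha$, $\widetilde{\beta}=\beta$, because (\ref{e25})--(\ref{e26}) force $\omega\,{\rm Re}(\beta)+(1-\omega)\,{\rm Im}(\beta)>q_i$ for $i\leq n$ and $\omega\,{\rm Re}(\alpha)+(1-\omega)\,{\rm Im}(\alpha)>q_i$ for $i>n$. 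With that choice the hypotheses become exactly the scaled diagonal inequalities (\ref{e28})--(\ref{e29}) used to bound $|E_0|,|F_0|,|G_0|,|H_0|$, and the same tilde quantities feed the spectral-radius bound needed to repair step (a). Your inductive step via Lemma \ref{thm3} for $(I-G_kH_k)^{-1}$ is in the right spirit, but it only works once the base comparison is set up against this shrunk $\widetilde{Q}$ with unchanged parameters.
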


\begin{proof}

Since $Q_{\omega}\textbf{1}>0$, we can take $\epsilon>0$ sufficiently small such that
$$\omega {\rm Re}([Q]_{ii})+(1-\omega){\rm Im}([Q]_{ii})\geq q_i+\epsilon,~~~~i=1,2,\cdots,m+n.$$ Furthermore, we assume that $\epsilon$ is small enough such that the inequalities in (\ref{e25}) and (\ref{e26}) strictly hold with $q_i$ and $q_j$ replaced by $q_i+\epsilon$ and $q_j+\epsilon$, respectively.
Let $\widetilde{Q}=
    \begin{cases}
    q_i+\epsilon,&i=j,\\
    -|[Q]_{ij}|,&i\neq j,
    \end{cases}$
and $\{\widetilde{E}_{k}\}$, $\{\widetilde{F}_{k}\}$, $\{\widetilde{G}_{k}\}$ and $\{\widetilde{H}_{k}\}$ be generated by ADDA applied to the MARE (\ref{e7}) with parameters $\widetilde{\alpha}=\alpha$, $\widetilde{\beta}=\beta$. As $(\omega {\rm Re}(\widetilde{\alpha})+(1-\omega){\rm Im}(\widetilde{\alpha}))\geq \max_{i=n+1,n+2,\cdots,n+m}[\widetilde{Q}]_{ii}$ and $(\omega {\rm Re}(\widetilde{\beta})+(1-\omega){\rm Im}(\widetilde{\beta}))\geq \max_{i=1,2,\cdots, n}[\widetilde{Q}]_{ii}$, Theorem \ref{thm10} applies.

We need to show that ADDA is well-defined when applied to the NARE (\ref{e1}) in class $\mathbb{H}^\omega$ and that the sequences $\{H_k\}$ and $\{G_k\}$ are bounded.  Similarly to the statements explained by \cite{Axe1}, we observe that it is enough to show
\begin{eqnarray}\label{e27}
\begin{aligned}
|E_0|&\leq |\widetilde{E}_0|,&|F_0|&\leq |\widetilde{F}_0|,\\
|G_0|&\leq \widetilde{G}_0,&|H_0|&\leq \widetilde{H}_0,
\end{aligned}
\end{eqnarray}
where $E_0$, $F_0$, $G_0$ and $H_0$ are as in (\ref{e24}).

Let
$$N={\rm diag}(\widetilde{D}_{\widetilde{\alpha}})|{\rm diag}(\alpha I+D)|^{-1},~~~M={\rm diag}(\widetilde{A}_{\widetilde{\beta}})|{\rm diag}(\beta I+A)|^{-1}.$$
Because $\widetilde{A}\leq A_{\omega}$ and $\widetilde{D}\leq D_{\omega}$,
\begin{eqnarray*}
\begin{aligned}
{\rm diag}(\widetilde{D}_{\widetilde{\alpha}})&={\rm diag}(\widetilde{D})+\omega {\rm Re}(\widetilde{\alpha})I+(1-\omega){\rm Im}(\widetilde{\alpha}) I\\
&\leq\omega {\rm diag}({\rm Re}(D))+(1-\omega){\rm diag}({\rm Im}(D))+\omega {\rm Re}(\alpha)I+(1-\omega){\rm Im}(\alpha) I\\
&=\omega({\rm diag}({\rm Re}(D))+{\rm Re}(\alpha) I)+(1-\omega)({\rm diag}({\rm Im}(D))+{\rm Im}(\alpha) I)\\
&\leq \sqrt{({\rm diag}({\rm Re}(D))+{\rm Re}(\alpha) I)^2+({\rm diag}({\rm Im}(D))+{\rm Im}(\alpha) I)^2}\\
&=|{\rm diag}(D+\alpha I)|=|{\rm diag}(D_{\alpha})|.
\end{aligned}
\end{eqnarray*}
Then $0\leq N\leq I_n$, $0\leq M\leq I_m$, and the inequalities (\ref{e25}) and (\ref{e26}) in Theorem \ref{thm12} can be written as
\begin{eqnarray}
N|{\rm diag}(\beta I-D)|\leq {\rm diag}((\omega {\rm Re}(\widetilde{\beta})+(1-\omega){\rm Im}(\widetilde{\beta})) I-\widetilde{D}),\label{e28}\\
M|{\rm diag}(\alpha I-A)|\leq {\rm diag}((\omega {\rm Re}(\widetilde{\alpha})+(1-\omega){\rm Im}(\widetilde{\alpha})) I-\widetilde{A}).\label{e29}
\end{eqnarray}
Because $\widetilde{A}_{\widetilde{\beta}},\widetilde{D}_{\widetilde{\alpha}},\widetilde{W}_{\widetilde{\alpha}\widetilde{\beta}}$ and $\widetilde{V}_{\widetilde{\alpha}\widetilde{\beta}}$ are nonsingular M-matrices, it follows from Lemma \ref{thm11} that
$$|(A_{\beta})^{-1}|\leq (\widetilde{A}_{\widetilde{\beta}})^{-1}M\leq (\widetilde{A}_{\widetilde{\beta}})^{-1},~~~~|(D_{\alpha})^{-1}|\leq (\widetilde{D}_{\widetilde{\alpha}})^{-1}N\leq (\widetilde{D}_{\widetilde{\alpha}})^{-1}.$$
Thus
\begin{eqnarray*}
\begin{aligned}
|W_{\alpha\beta}^{-1}|&=|(A_{\beta}-BD_{\alpha}^{-1}C)^{-1}|\\
&\leq |(I-A_{\beta}^{-1}BD_{\alpha}^{-1}C)^{-1}||A_{\beta}^{-1}|\\
&\leq (I-\widetilde{A}_{\widetilde{\beta}}^{-1}\widetilde{B}\widetilde{D}_{\widetilde{\alpha}}^{-1}\widetilde{C})^{-1}\widetilde{A}_{\widetilde{\beta}}^{-1}M\\
&=\widetilde{W}_{\widetilde{\alpha}\widetilde{\beta}}^{-1}M\leq \widetilde{W}_{\widetilde{\alpha}\widetilde{\beta}}^{-1}.
\end{aligned}
\end{eqnarray*}
Similarly,
$$|V_{\alpha\beta}^{-1}|\leq \widetilde{V}_{\widetilde{\alpha}\widetilde{\beta}}^{-1}N\leq\widetilde{V}_{\widetilde{\alpha}\widetilde{\beta}}^{-1} .$$

From the inequality (\ref{e28}), we have
\begin{eqnarray*}
\begin{aligned}
|E_0|&=|V_{\alpha\beta}^{-1}(D-\beta I-CA_{\beta}^{-1}B)|\\
&\leq  \widetilde{V}_{\widetilde{\alpha}\widetilde{\beta}}^{-1}N(|\beta I-D|+|CA_{\beta}^{-1}B|)\\
&\leq \widetilde{V}_{\widetilde{\alpha}\widetilde{\beta}}^{-1}(N|\beta I-D|+\widetilde{C}\widetilde{A}_{\widetilde{\beta}}^{-1}\widetilde{B})\\
&\leq \widetilde{V}_{\widetilde{\alpha}\widetilde{\beta}}^{-1}((\omega {\rm Re}(\widetilde{\beta})+(1-\omega){\rm Im}(\widetilde{\beta})) I-\widetilde{D}+\widetilde{C}\widetilde{A}_{\widetilde{\beta}}^{-1}\widetilde{B})\\
&=-\widetilde{E}_0=|\widetilde{E}_0|
\end{aligned}
\end{eqnarray*}
Similarly, $|F_0|\leq -\widetilde{F}_0=|\widetilde{F}_0|$ by (\ref{e29}).
It follows from (\ref{e24}) that
\begin{eqnarray*}
\begin{aligned}
|G_0|&= |(\alpha+\beta)D_{\alpha}^{-1}CW_{\alpha\beta}^{-1}|\\
&=|D_{\alpha}^{-1}C(I-F_0)|\\
&\leq|D_{\alpha}^{-1}||C||I-F_0|\\
&\leq\widetilde{D}_{\widetilde{\alpha}}^{-1}\widetilde{C}(I+|F_0|)\\
&\leq \widetilde{D}_{\widetilde{\alpha}}^{-1}\widetilde{C}(I-\widetilde{F}_0)\\
&=(\omega {\rm Re}(\widetilde{\alpha}+\widetilde{\beta})+(1-\omega){\rm Im}(\widetilde{\alpha}+\widetilde{\beta}))\widetilde{D}_{\widetilde{\alpha}}^{-1}\widetilde{C}\widetilde{W}_{\widetilde{\alpha}\widetilde{\beta}}^{-1}\\
&=\widetilde{G}_0
\end{aligned}
\end{eqnarray*}
Similarly, $|H_0|\leq \widetilde{H}_0$.  To complete the proof, we also need to show that  $\rho(\mathcal{R})\rho(\mathcal{S})<1$. We have
$$|(R+\alpha I)^{-1}|\leq (\widetilde{R}+(\omega {\rm Re}(\widetilde{\alpha})+(1-\omega){\rm Im}(\widetilde{\alpha})) I)^{-1}N$$
and
$$|R-\beta I|\leq -N^{-1}(\widetilde{R}-(\omega {\rm Re}(\widetilde{\beta})+(1-\omega){\rm Im}(\widetilde{\beta})) I).$$
Then
\begin{eqnarray*}
\begin{aligned}
|\mathcal{R}|&\leq |R-\beta I||(R+\alpha I)^{-1}|\leq \\
&-N^{-1}(\widetilde{R}-(\omega {\rm Re}(\widetilde{\beta})+(1-\omega){\rm Im}(\widetilde{\beta})) I)(\widetilde{R}+(\omega {\rm Re}(\widetilde{\alpha})+(1-\omega){\rm Im}(\widetilde{\alpha})) I)^{-1}N\\
&=-N^{-1}\widetilde{\mathcal{R}}N.
\end{aligned}
\end{eqnarray*}
Similarly, $|\mathcal{S}|\leq-M^{-1}\mathcal{\widetilde{S}}M$. Therefore $\rho(\mathcal{R})\rho(\mathcal{S})\leq\rho(\widetilde{\mathcal{R}})\rho(\widetilde{\mathcal{S}})<1$.
\end{proof}

Notice that ADDA is reduced to SDA when $\alpha=\beta$. We will gain Theorem \ref{thm13} below which states the convergence of SDA applied to the NARE (\ref{e1}) in class $\mathbb{H}^\omega$.

\begin{theorem}\label{thm13}
Let $\Phi$ and $\Psi$ be the extremal solutions of the NARE {\rm(\ref{e1})} and its dual NARE {\rm(\ref{e3})} in class $\mathbb{H}^\omega$, respectively. Suppose that $Q_{\omega}\textbf{1} > 0$. Let $\{E_k\}$, $\{F_k\}$, $\{G_k\}$ and $\{H_k\}$ be generated by SDA applied to the NARE {\rm(\ref{e1})} with the
parameters $\alpha$ satisfying
   \begin{eqnarray*}
     \frac{\omega {\rm Re}(\alpha)+(1-\omega) {\rm Im}(\alpha)+q_i}{|\alpha+[Q]_{ii}|}|\alpha-[Q]_{ii}|<\omega {\rm Re}(\alpha)+(1-\omega) {\rm Im}(\alpha)-q_i,~~~~i=1,2,\cdots,m+n.
   \end{eqnarray*}
Then the sequences $\{H_k\}$ and $\{G_k\}$ quadratically converge to $\Phi$ and $\Psi$, respectively.
\end{theorem}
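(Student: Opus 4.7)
The plan is to derive Theorem~\ref{thm13} as a direct specialization of Theorem~\ref{thm12}. The key observation, emphasized in the remark just before the statement, is that the SDA iteration is exactly the ADDA iteration with $\alpha=\beta$. So one would like to set $\beta=\alpha$ in Theorem~\ref{thm12} and check that its hypotheses collapse to the single inequality assumed here.

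First, I would verify that, upon substituting $\beta=\alpha$, the two parameter conditions \eqref{e25} and \eqref{e26} become a single condition. With $\beta=\alpha$, both $\omega\,{\rm Re}(\alpha)+(1-\omega)\,{\rm Im}(\alpha)$ and $\omega\,{\rm Re}(\beta)+(1-\omega)\,{\rm Im}(\beta)$ coincide, and both $|\alpha+[Q]_{ii}|$, $|\beta+[Q]_{ii}|$ coincide; similarly for $|\alpha-[Q]_{ii}|$ and $|\beta-[Q]_{ii}|$. Thus \eqref{e25} and \eqref{e26}, which together range over $i=1,2,\dots,n$ and $i=n+1,\dots,n+m$ respectively, merge into exactly the inequality
\[
\frac{\omega\,{\rm Re}(\alpha)+(1-\omega)\,{\rm Im}(\alpha)+q_i}{|\alpha+[Q]_{ii}|}\,|\alpha-[Q]_{ii}|<\omega\,{\rm Re}(\alpha)+(1-\omega)\,{\rm Im}(\alpha)-q_i,\quad i=1,2,\dots,m+n,
\]
which is precisely the assumption of Theorem~\ref{thm13}. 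Moreover, the positivity conditions $\omega\,{\rm Re}(\alpha)+(1-\omega)\,{\rm Im}(\alpha)>0$ and $\omega\,{\rm Re}(\beta)+(1-\omega)\,{\rm Im}(\beta)>0$ collapse to the single requirement $\omega\,{\rm Re}(\alpha)+(1-\omega)\,{\rm Im}(\alpha)>0$, which is forced by the above inequality (since $q_i+\epsilon>0$ for the shifted comparison matrix used in the ADDA proof).

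Next, with the hypothesis on $\alpha$ verified and the standing assumption $Q_{\omega}\mathbf{1}>0$ inherited, I would invoke Theorem~\ref{thm12} with $\beta=\alpha$. The conclusions of Theorem~\ref{thm12} — namely that $\{H_k\}$ and $\{G_k\}$ are well-defined through the doubling recursion and converge quadratically to $\Phi$ and $\Psi$, respectively — then transfer verbatim to the SDA sequences generated here.

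The only point that merits a one-line check is that the SDA-specific matrices $\mathcal{R}$, $\mathcal{S}$, $E_0$, $F_0$, $G_0$, $H_0$ obtained from \eqref{e24} with $\alpha=\beta$ coincide with the ADDA ones used in the proof of Theorem~\ref{thm12}; this is immediate from the formulas. I do not anticipate any genuine obstacle: the whole argument is a substitution, and all technical work — the inequalities $|E_0|\le|\widetilde{E}_0|$, $|F_0|\le|\widetilde{F}_0|$, $|G_0|\le\widetilde{G}_0$, $|H_0|\le\widetilde{H}_0$, the boundedness of $\{H_k\}$ and $\{G_k\}$, and the spectral bound $\rho(\mathcal{R})\rho(\mathcal{S})<1$ — has already been carried out in Theorem~\ref{thm12}.
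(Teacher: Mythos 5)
Your proposal is correct and matches the paper's (implicit) argument: the paper gives no separate proof of Theorem~\ref{thm13}, deriving it exactly as you do, by noting that SDA is ADDA with $\alpha=\beta$ and specializing Theorem~\ref{thm12}, whereupon the conditions (\ref{e25})--(\ref{e26}) merge into the single stated inequality over $i=1,2,\cdots,m+n$.
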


\subsection{Strategies of choosing parameters $\alpha$ and $\beta$}

Theoretically, we have known from (\ref{e25}) and (\ref{e26}) how to choose the parameters $\alpha$ and $\beta$, but the ranges of the parameters $\alpha$ and $\beta$ are not intuitive. We need to clarify them in terms of the real parts and imaginary parts of the parameters $\alpha$ and $\beta$. Next, we will search for the parameters $\alpha$ and $\beta$ in the ray $R(z_{\omega}^\bot)$.

Let $\alpha=tz_{\omega}^\bot$ and $\beta=\gamma z_{\omega}^\bot$ with $t>0$, $\gamma>0$. Then we have ${\rm Re}(\alpha)=t\omega$, ${\rm Im}(\alpha)=t(1-\omega)$ and ${\rm Re}(\beta)=\gamma\omega$, ${\rm Im}(\beta)=\gamma(1-\omega)$. We can rewrite (\ref{e25}) and (\ref{e26}) as
\begin{eqnarray*}
   \begin{aligned}
     \frac{t\omega^2+t(1-\omega)^2+q_i}{|\alpha+[Q]_{ii}|}|\beta-[Q]_{ii}|&<\gamma\omega^2+\gamma(1-\omega)^2-q_i,&i&=1,2,\cdots,n.\\
     \frac{\gamma\omega^2+\gamma(1-\omega)^2+q_i}{|\beta+[Q]_{ii}|}|\alpha-[Q]_{ii}|&<t\omega^2+t(1-\omega)^2-q_i, &i&=n+1,n+2,\cdots,n+m.
     \end{aligned}
\end{eqnarray*}
Let $\varpi=\omega^2+(1-\omega)^2$ with $\omega\in[0,1]$, then we have $\varpi>0$ and
\begin{eqnarray*}
\begin{aligned}
&(t^2\varpi^2+q_i^2+2t\varpi q_i)((\gamma\omega-{\rm Re}([Q]_{ii}))^2+(\gamma(1-\omega)-{\rm Im}([Q]_{ii}))^2)&&\\
<&(\gamma^2\varpi^2+q_i^2-2\gamma \varpi q_i)((t\omega+{\rm Re}([Q]_{ii}))^2+(t(1-\omega)+{\rm Im}([Q]_{ii}))^2),~~~\gamma \varpi>q_i,~~~i=1,2,\cdots,n.\\
&(\gamma ^2\varpi^2+q_i^2+2\gamma \varpi q_i)((t\omega-{\rm Re}([Q]_{ii}))^2+(t(1-\omega)-{\rm Im}([Q]_{ii}))^2)&&\\
<&(t^2\varpi^2+q_j^2-2t\varpi q_i)((\gamma \omega+{\rm Re}([Q]_{ii}))^2+(\gamma (1-\omega)+{\rm Im}([Q]_{ii}))^2),~~~t\varpi>q_i,~~~i=n+1,n+2,\cdots,n+m.
\end{aligned}
\end{eqnarray*}
The formulas are equivalent to
\begin{eqnarray*}
\begin{aligned}
(\varpi|[Q]_{ii}|^2-q_i^2)(t\varpi-\gamma \varpi)<&2(t\gamma \varpi^2-q_i(\omega {\rm Re}([Q]_{ii})+(1-\omega){\rm Im}([Q]_{ii})))(\omega {\rm Re}([Q]_{ii})+(1-\omega){\rm Im}([Q]_{ii})-q_i)\\
&-2q_i(\omega {\rm Im}([Q]_{ii})-(1-\omega){\rm Re}([Q]_{ii}))^2,~~~~\gamma \varpi>q_i,~~~~i=1,2,\cdots,n.\\
(\varpi|[Q]_{ii}|^2-q_j^2)(\gamma \varpi-t\varpi)<&2(t\gamma \varpi^2-q_i(\omega {\rm Re}([Q]_{ii})+(1-\omega){\rm Im}([Q]_{ii})))(\omega {\rm Re}([Q]_{ii})+(1-\omega){\rm Im}([Q]_{ii})-q_i)\\
&-2q_i(\omega {\rm Im}([Q]_{ii})-(1-\omega){\rm Re}([Q]_{ii}))^2,~~~~t\varpi>q_i,~~~~i=n+1,n+2,\cdots,n+m.
\end{aligned}
\end{eqnarray*}
Let
$$p_{\omega,i}=\frac{\omega {\rm Re}([Q]_{ii})+(1-\omega){\rm Im}([Q]_{ii})+q_i}{2}+\frac{(\omega{\rm Im}([Q]_{ii})-(1-\omega){\rm Re}([Q]_{ii}))^2}{2(\omega {\rm Re}([Q]_{ii})+(1-\omega){\rm Im}([Q]_{ii})-q_i)},~~~~i=1,2,\cdots,n+m,$$
$$s_{\omega,i}=\frac{\omega {\rm Re}([Q]_{ii})+(1-\omega){\rm Im}([Q]_{ii})-q_i}{2}+\frac{(\omega{\rm Im}([Q]_{ii})-(1-\omega){\rm Re}([Q]_{ii}))^2}{2(\omega {\rm Re}([Q]_{ii})+(1-\omega){\rm Im}([Q]_{ii})-q_i)},~~~~i=1,2,\cdots,n+m.$$
Then we have
\begin{eqnarray*}
\begin{aligned}
(t\varpi+p_{\omega,i})(\gamma \varpi-p_{\omega,i})&>-s_{\omega,i}^2,&\gamma \varpi&>q_i,&i&=1,2,\cdots,n,\\
(\gamma \varpi+p_{\omega,i})(t\varpi-p_{\omega,i})&>-s_{\omega,i}^2,&t\varpi&>q_i,&j&=n+1,n+2,\cdots,n+m,
\end{aligned}
\end{eqnarray*}
equivalently,
\begin{align}
&(t+\frac{p_{\omega,i}}{\varpi})(\gamma-\frac{p_{\omega,i}}{\varpi})>-\frac{s_{\omega,i}^2}{\varpi^2},~~~~\gamma>\frac{q_i}{\varpi}, ~~~~i=1,2,\cdots,n,,\label{e30}\\
&(\gamma+\frac{p_{\omega,i}}{\varpi})(t-\frac{p_{\omega,i}}{\varpi})>-\frac{s_{\omega,i}^2}{\varpi^2},~~~~t>\frac{q_i}{\varpi},~~~~j=n+1,n+2,\cdots,n+m,\label{e31}
\end{align}

\subsubsection{Immediate choice of the resulting parameters $t$ and $\gamma$}\

Let $$\psi_1=\max_{i=n+1,n+2,\cdots,m+n}\frac{p_{\omega,i}}{\varpi}=\frac{\max_{i=n+1,n+2,\cdots,m+n}p_{\omega,i}}{\varpi},~~~~\psi_2=\max_{i=1,2,\cdots,n}\frac{p_{\omega,i}}{\varpi}=\frac{\max_{i=1,2,\cdots,n}p_{\omega,i}}{\varpi}.$$
It is easy to show that $\frac{p_{\omega,i}}{\varpi}>\frac{q_i}{\varpi}$ since $\omega {\rm Re}([Q]_{ii})+(1-\omega){\rm Im}([Q]_{ii})>q_i$, $i=1,2,\cdots,m+n$. So, Theorem \ref{thm14} and Theorem \ref{thm15} below are obvious. Theorem \ref{thm14} states the convergence results for ADDA, and Theorem \ref{thm15} states the convergence results for SDA.

\begin{theorem}\label{thm14}
Suppose the NARE (\ref{e1}) is in class $\mathbb{H}^\omega$ and $Q_\omega \textbf{1}>0$. Let $\Phi$ and $\Psi$ be as in Theorem \ref{thm5}. Apply ADDA to the NARE (\ref{e1}) with $\alpha=tz_{\omega}^\bot$ and $\beta=\gamma z_{\omega}^\bot.$
If $$t\geq\psi_1,~~~~\gamma\geq\psi_2,$$ then the sequences $\{E_k\}$, $\{F_k\}$, $\{H_k\}$, $\{G_k\}$ are well-defined and $\{H_k\}$ and $\{G_k\}$ quadratically converge  to $\Phi$ and $\Psi$, respectively.
\end{theorem}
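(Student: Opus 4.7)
The plan is to reduce Theorem \ref{thm14} to a direct application of Theorem \ref{thm12}, since the bulk of the analysis has already been carried out when deriving the equivalent reformulations (\ref{e30}) and (\ref{e31}) of the hypotheses (\ref{e25}) and (\ref{e26}). All that remains is to verify that the specific parametrization $\alpha = tz_\omega^\bot$, $\beta = \gamma z_\omega^\bot$ with $t \geq \psi_1$ and $\gamma \geq \psi_2$ actually makes those inequalities hold.

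First I would compute the relevant quantities for this ray-restricted choice: $\omega\mathrm{Re}(\alpha)+(1-\omega)\mathrm{Im}(\alpha) = t(\omega^2+(1-\omega)^2) = t\varpi > 0$ and, symmetrically, $\omega\mathrm{Re}(\beta)+(1-\omega)\mathrm{Im}(\beta) = \gamma\varpi > 0$, so the positivity preconditions on $\alpha,\beta$ required by Theorem \ref{thm12} hold automatically. Next, I would record the consequence of the hypothesis $Q_\omega\mathbf{1}>0$, namely $\omega\mathrm{Re}([Q]_{ii}) + (1-\omega)\mathrm{Im}([Q]_{ii}) > q_i$ for every $i$; this yields both $p_{\omega,i}/\varpi > q_i/\varpi$ (already noted just before the statement) and, more importantly, $s_{\omega,i} > 0$ strictly, so that $-s_{\omega,i}^2/\varpi^2 < 0$.

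With these preliminaries in hand, I would verify (\ref{e30}) for each $i\in\{1,\ldots,n\}$: since $\gamma \geq \psi_2 \geq p_{\omega,i}/\varpi$ one has $\gamma - p_{\omega,i}/\varpi \geq 0$, while $t + p_{\omega,i}/\varpi > 0$, so the left-hand side is $\geq 0$, strictly greater than the negative right-hand side. The side condition $\gamma > q_i/\varpi$ follows from $\gamma \geq \psi_2 \geq p_{\omega,i}/\varpi > q_i/\varpi$. The verification of (\ref{e31}) for $i\in\{n+1,\ldots,n+m\}$ is entirely symmetric, using $t \geq \psi_1 \geq p_{\omega,i}/\varpi$. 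Having established (\ref{e25}) and (\ref{e26}), Theorem \ref{thm12} applies verbatim and delivers the well-definedness of the four sequences together with the quadratic convergence $H_k \to \Phi$, $G_k \to \Psi$.

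The only subtle point—the mild obstacle, such as it is—is that (\ref{e30}) and (\ref{e31}) are \emph{strict} inequalities that must hold even at the boundary cases $t = \psi_1$ or $\gamma = \psi_2$, where the corresponding factor $\gamma - p_{\omega,i}/\varpi$ or $t - p_{\omega,i}/\varpi$ vanishes. This is exactly why the strict positivity $s_{\omega,i} > 0$ matters: it ensures the right-hand side $-s_{\omega,i}^2/\varpi^2$ is \emph{strictly} negative, so that an LHS equal to $0$ still exceeds it. Since $s_{\omega,i}>0$ is guaranteed by the strict inequality $Q_\omega\mathbf{1}>0$ in the hypothesis, no additional assumption is needed, and the theorem follows.
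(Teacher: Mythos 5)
Your proposal is correct and follows essentially the same route as the paper: the paper derives the reformulations (\ref{e30})--(\ref{e31}) of the hypotheses of Theorem \ref{thm12} precisely so that Theorem \ref{thm14} follows "obviously" from $t\geq\psi_1$, $\gamma\geq\psi_2$ and $p_{\omega,i}/\varpi>q_i/\varpi$, which is exactly your verification. Your explicit treatment of the boundary case via $s_{\omega,i}>0$ (guaranteed by $Q_\omega\mathbf{1}>0$) is a detail the paper leaves implicit, and it is correct.
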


\begin{theorem}\label{thm15}
Suppose the NARE (\ref{e1}) is in class $\mathbb{H}^\omega$ and $Q_\omega \textbf{1}>0$. Let $\Phi$ and $\Psi$ be as in Theorem \ref{thm5}. Apply SDA to the NARE (\ref{e1}) with $\alpha=tz_{\omega}^\bot$.
If $$t\geq\max\{\psi_1,\psi_2\},$$ then the sequences $\{E_k\}$, $\{F_k\}$, $\{H_k\}$, $\{G_k\}$ are well-defined and $\{H_k\}$ and $\{G_k\}$ quadratically converge  to $\Phi$ and $\Psi$, respectively.
\end{theorem}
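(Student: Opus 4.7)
The plan is simply to observe that Theorem \ref{thm15} is the $\alpha=\beta$ specialization of Theorem \ref{thm14}. Since SDA is precisely ADDA with $\alpha=\beta$ (as remarked immediately after Theorem \ref{thm12}), setting $\gamma=t$ in the ADDA parameterization $\alpha=tz_{\omega}^\bot$, $\beta=\gamma z_{\omega}^\bot$ recovers the SDA parameterization of Theorem \ref{thm15}. The hypothesis $t\geq\max\{\psi_1,\psi_2\}$ then simultaneously yields $t\geq\psi_1$ and $\gamma=t\geq\psi_2$, so the hypotheses of Theorem \ref{thm14} hold verbatim, and the well-definedness of the sequences $\{E_k\},\{F_k\},\{H_k\},\{G_k\}$ together with the quadratic convergence $H_k\to\Phi$ and $G_k\to\Psi$ transfer without change.

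For a self-contained verification appealing directly to Theorem \ref{thm13}, I would substitute $\alpha=tz_{\omega}^\bot$, giving $\omega\,{\rm Re}(\alpha)+(1-\omega){\rm Im}(\alpha)=t\varpi$ with $\varpi=\omega^2+(1-\omega)^2$, and then repeat the algebraic manipulations performed in Section 6.4 (squaring the moduli and collecting terms into $p_{\omega,i}$ and $s_{\omega,i}$) with $\gamma$ replaced throughout by $t$. This reduces the hypothesis of Theorem \ref{thm13} to the pair of conditions
\begin{equation*}
\Bigl(t+\tfrac{p_{\omega,i}}{\varpi}\Bigr)\Bigl(t-\tfrac{p_{\omega,i}}{\varpi}\Bigr)>-\tfrac{s_{\omega,i}^2}{\varpi^2}, \qquad t>\tfrac{q_i}{\varpi}, \qquad i=1,2,\ldots,m+n.
\end{equation*}
Once $t\geq\max\{\psi_1,\psi_2\}=\max_i p_{\omega,i}/\varpi$, both factors $t\pm p_{\omega,i}/\varpi$ are nonnegative, so their product is nonnegative, whereas the right-hand side $-s_{\omega,i}^2/\varpi^2$ is strictly negative because $s_{\omega,i}>0$ (a consequence of $Q_\omega\textbf{1}>0$, which gives $\omega\,{\rm Re}([Q]_{ii})+(1-\omega){\rm Im}([Q]_{ii})>q_i$). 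The ancillary bound $t>q_i/\varpi$ is automatic because $p_{\omega,i}>q_i$, as already noted before Theorem \ref{thm14}.

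There is essentially no substantive obstacle: the theorem is a bookkeeping corollary of the more general Theorem \ref{thm14}, itself obtained by specializing Theorem \ref{thm12} after the algebraic reduction of Section 6.4. The only point meriting care is the strictness of the inequality at the boundary value $t=\max\{\psi_1,\psi_2\}$, where the left-hand product vanishes for the maximizing index; strictness is preserved precisely because $s_{\omega,i}^2>0$ forces the right-hand side to be strictly negative, so zero still satisfies the strict inequality.
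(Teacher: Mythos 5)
Your proposal is correct and follows essentially the same route as the paper, which states Theorem \ref{thm15} (together with Theorem \ref{thm14}) as an immediate consequence of the reduction of conditions (\ref{e25})--(\ref{e26}) to (\ref{e30})--(\ref{e31}) and the observation that $p_{\omega,i}/\varpi>q_i/\varpi$; your argument simply fills in the details the paper declares obvious, including the correct handling of the boundary case via $s_{\omega,i}>0$.
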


\subsubsection{Choice of the parameters $t$ and $\gamma$ using a preprocessing procedure}\

For a given NARE, $\frac{\max_{i=1,2,\cdots,m+n}p_{\omega,i}}{\varpi}$ may be very large, which will result in slow convergence. But it is quite possible that we can transform the given NARE into a new NARE for which $\frac{\max_{i=1,2,\cdots,m+n}p_{\omega,i}}{\varpi}$ is much smaller and the solution sets of the two NAREs are related in a simple way.

We first rewrite $\frac{p_{\omega,i}}{\varpi}$, $i=1,2,\cdots,m+n,$ in a more compact form:
$$\frac{p_{\omega,i}}{\varpi}=\frac{\varpi|[Q]_{ii}|^2-q_i^2}{2\varpi(\omega {\rm Re}([Q]_{ii})+(1-\omega){\rm Im}([Q]_{ii})-q_i)},~~i=1,2,\cdots,m+n,$$ where $\varpi=\omega^2+(1-\omega)^2$.

For the given NARE (\ref{e1}) with $Q_\omega \textbf{1}>0$ in class $\mathbb{H}^\omega$, we consider the new NARE
\begin{eqnarray}\label{e32}
X(\chi C)X-X(\chi D)-(\chi A)X+\chi B=0,
\end{eqnarray}
where $\chi$ is on the unit circle. Obviously, the new NARE (\ref{e32}) has the same solution set as the original one. The matrix corresponding to the NARE (\ref{e32}) is
\begin{eqnarray*}
Q_{\chi}=
\left(
\begin{matrix}
\chi D&-\chi C\\
-\chi B&\chi A
\end{matrix}
\right).
\end{eqnarray*}
Note that the $\omega$-comparison matrix $(Q_{\chi})_{\omega}$ of $Q_{\chi}$ is
\begin{eqnarray*}
(Q_{\chi})_{\omega}=
\left(
\begin{matrix}
(\chi D)_\omega&-|\chi C|\\
-|\chi B|&(\chi A)_\omega
\end{matrix}
\right),
\end{eqnarray*}
we have
\begin{eqnarray}\label{e33}
\begin{aligned}
\frac{p_{\omega,i}^\chi}{\varpi}&=\frac{\varpi|\chi [Q]_{ii}|^2-(|\chi| q_i)^2}{2\varpi(\omega {\rm Re}(\chi [Q]_{ii})+(1-\omega){\rm Im}(\chi [Q]_{ii})-|\chi|q_i)}\\
&=\frac{\varpi|[Q]_{ii}|^2-q_i^2}{2\varpi(\omega {\rm Re}(\chi [Q]_{ii})+(1-\omega){\rm Im}(\chi [Q]_{ii})-q_i)},~~~i=1,2,\cdots,m+n
\end{aligned}
\end{eqnarray}
where $\chi$ is such that $\omega {\rm Re}(\chi [Q]_{ii})+(1-\omega){\rm Im}(\chi [Q]_{ii})>q_i,i=1,2,\cdots,m+n$ (so the NARE(\ref{e32}) is still in class $\mathbb{H}^\omega$). Note that
\begin{eqnarray*}
\begin{aligned}
\varpi|[Q]_{ii}|^2-q_i^2&=(\omega {\rm Re}([Q]_{ii})+(1-\omega){\rm Im}([Q]_{ii})+q_i)(\omega {\rm Re}([Q]_{ii})+(1-\omega){\rm Im}([Q]_{ii})-q_i)\\
&~~~+(\omega{\rm Im}([Q]_{ii})-(1-\omega){\rm Re}([Q]_{ii}))^2>0\\
\end{aligned}
\end{eqnarray*}
since $q_i\geq0$ and $\omega {\rm Re}([Q]_{ii})+(1-\omega){\rm Im}([Q]_{ii})-q_i>0$ for $i=1,2,\cdots,m+n$. Thus, for each fixed $i$, in order to minimize the positive quantity $\frac{p_{\omega,i}^\chi}{\varpi}$, we need to search for a complex number $\chi$ such that $\omega {\rm Re}(\chi [Q]_{ii})+(1-\omega){\rm Im}(\chi [Q]_{ii})-q_i$ attains the maximal value.

Let $\chi=e^{-{\rm j}\vartheta}$ and $[Q]_{ii}=|[Q]_{ii}|e^{{\rm j}\phi_i},i=1,2,\cdots,m+n$. Then
\begin{eqnarray*}
\begin{aligned}
&~~~~\omega {\rm Re}(\chi [Q]_{ii})+(1-\omega){\rm Im}(\chi [Q]_{ii})-q_i\\
&=|[Q]_{ii}|(\omega {\rm Re}(e^{{\rm j}(\phi_i-\vartheta)})+(1-\omega){\rm Im}(e^{{\rm j}(\phi_i-\vartheta)}))-q_i\\
&=|[Q]_{ii}|(\omega {\rm cos}(\phi_i-\vartheta)+(1-\omega){\rm sin}(\phi_i-\vartheta))-q_i\\
&=|[Q]_{ii}|\sqrt{\omega^2+(1-\omega)^2}(\frac{\omega }{\sqrt{\omega^2+(1-\omega)^2}}{\rm cos}(\phi_i-\vartheta)+\frac{1-\omega }{\sqrt{\omega^2+(1-\omega)^2}}{\rm sin}(\phi_i-\vartheta))-q_i,\\
&~~~~~~~~~~~~~~~~~~~~~~~~~~~~~~~~~~~~~~~~~~~~~~~~~~~~~~~~~~~~~~~~~~~~~~i=1,2,\cdots,m+n.
\end{aligned}
\end{eqnarray*}
Let ${\rm cos}(\varphi)=\frac{\omega }{\sqrt{\omega^2+(1-\omega)^2}}$, and further ${\rm sin}(\varphi)=\frac{1-\omega }{\sqrt{\omega^2+(1-\omega)^2}}$, then for $i=1,2,\cdots,m+n,$
\begin{eqnarray*}
\begin{aligned}
\omega {\rm Re}(\chi [Q]_{ii})+(1-\omega){\rm Im}(\chi [Q]_{ii})-q_i&=|[Q]_{ii}|\sqrt{\omega^2+(1-\omega)^2}{\rm cos}(\phi_i-\varphi-\vartheta)-q_i,
\end{aligned}
\end{eqnarray*}
which attains the maximum $|[Q]_{ii}|\sqrt{\omega^2+(1-\omega)^2}-q_i>0$ at $\vartheta=\phi_i-\varphi$ since $\varpi|[Q]_{ii}|^2-q_i^2>0$.

If all complex numbers $[Q]_{ii}$ are on the same line passing through the origin, i.e., $\phi_i$ is constant for all $i$, then a common value $\chi=e^{-{\rm j}(\phi_i-\varphi)}$ will make all $\omega {\rm Re}(\chi [Q]_{ii})+(1-\omega){\rm Im}(\chi [Q]_{ii})-q_i$ attain the maximum $|[Q]_{ii}|\sqrt{\omega^2+(1-\omega)^2}-q_i>0$ for $i=1,2,\cdots,m+n$. In other words, the new NARE  with this $\chi$ is in class $\mathbb{H}^{\omega}$. In general, we cannot find a fixed $\chi=e^{-{\rm j}\vartheta}$ to minimize $\frac{p_{\omega,i}^\chi}{\varpi}$ for all $i$. So, we let
$$f_{i}(\vartheta)=\frac{\varpi|[Q]_{ii}|^2-q_i^2}{\varpi(|[Q]_{ii}|\sqrt{\omega^2+(1-\omega)^2}{\rm cos}(\phi_i-\varphi-\vartheta)-q_i)}, \vartheta\in \mathbb{R}, $$
and try to find $\vartheta$ such that
$$f(\vartheta)=\max_{i=1,2,\cdots,m+n}f_i(\vartheta)$$
is minimized, subject to the condition that $\omega {\rm Re}(e^{-{\rm j}\vartheta} [Q]_{ii})+(1-\omega){\rm Im}(e^{-{\rm j}\vartheta} [Q]_{ii})-q_i>0$ for all $i$.

\begin{theorem}\label{thm16}
Suppose the NARE (\ref{e1}) is in class $\mathbb{H}^\omega$ and $Q_{\omega}\textbf{1}>0$. Then the function $f(\vartheta)$ has a unique minimizer $\vartheta_*\in (-\pi,\pi)$ and $\vartheta_*$ can be computed by a bisection procedure.
\end{theorem}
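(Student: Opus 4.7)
The plan is to show that $f$ is strictly convex on its feasible interval and blows up at the endpoints; this gives existence and uniqueness of a minimizer for free, and then classical bisection applies to the (one-sided) derivative.

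First I set up the notation. Write $h_i(\vartheta)=|[Q]_{ii}|\sqrt{\varpi}\,\cos(\phi_i-\varphi-\vartheta)-q_i$ and $N_i=\varpi|[Q]_{ii}|^2-q_i^2$, so that $f_i(\vartheta)=N_i/(\varpi\,h_i(\vartheta))$. By the algebraic identity displayed just before the theorem statement, $N_i>0$ for every $i$. The feasible set is $\mathcal{D}=\{\vartheta:h_i(\vartheta)>0,\ i=1,\ldots,m+n\}$. The hypothesis $Q_{\omega}\textbf{1}>0$ gives $h_i(0)=[Q_{\omega}]_{ii}-q_i>0$, so $0\in\mathcal{D}$; since each defining constraint $h_i>0$ cuts out an open arc of length less than $\pi$ on the circle $\mathbb{R}/2\pi\mathbb{Z}$, the connected component of $\mathcal{D}$ containing $0$ is an open interval $(\vartheta_L,\vartheta_R)\subset(-\pi,\pi)$ of length less than $\pi$. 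I work on this component from now on.

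Next I prove strict convexity of $f$ on $(\vartheta_L,\vartheta_R)$. Twice differentiating, $h_i''(\vartheta)=-|[Q]_{ii}|\sqrt{\varpi}\,\cos(\phi_i-\varphi-\vartheta)=-(h_i(\vartheta)+q_i)<0$, so each $h_i$ is strictly concave and positive on the interval. Because $N_i>0$ and $(1/h_i)''=-h_i''/h_i^2+2(h_i')^2/h_i^3>0$, every $f_i$ is strictly convex. A pointwise maximum of strictly convex functions is strictly convex (take an active index at the convex combination and use $f_i\leq f$), so $f$ is strictly convex on $(\vartheta_L,\vartheta_R)$. As $\vartheta\to\vartheta_L^+$ or $\vartheta\to\vartheta_R^-$, at least one $h_i\to 0^+$, so $f\to+\infty$; continuity together with strict convexity then forces $f$ to attain its infimum at a unique interior point $\vartheta_*$.

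For the bisection, strict convexity guarantees that the one-sided derivatives $f'_-$ and $f'_+$ exist everywhere on $(\vartheta_L,\vartheta_R)$, are non-decreasing in $\vartheta$, and can be computed from the active index set $\mathcal{I}(\vartheta)=\{i:f_i(\vartheta)=f(\vartheta)\}$ via $f'_-(\vartheta)=\min_{i\in\mathcal{I}(\vartheta)}f_i'(\vartheta)$ and $f'_+(\vartheta)=\max_{i\in\mathcal{I}(\vartheta)}f_i'(\vartheta)$. The minimizer $\vartheta_*$ is characterised by $f'_-(\vartheta_*)\leq 0\leq f'_+(\vartheta_*)$. Choosing $a_0<b_0$ in $(\vartheta_L,\vartheta_R)$ with $f'_+(a_0)<0$ and $f'_-(b_0)>0$, which is possible by the boundary blow-up, classical bisection on the monotone function $f'_\pm$ then produces nested intervals of halved length converging to $\vartheta_*$. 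The main delicate point, and the only real obstacle, is justifying the explicit formulas for $f'_\pm$ when several indices are simultaneously active and verifying that the bisection step correctly picks the subinterval containing $\vartheta_*$; once this is recorded the construction is entirely standard.
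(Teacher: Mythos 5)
Your proposal is correct in substance, but it takes a genuinely different route from the paper. The paper never uses convexity: it only observes that each $f_i$ is unimodal (strictly decreasing to the left of $\phi_i-\varphi$, strictly increasing to the right), builds an explicit initial bracket $[\vartheta_{\min},\vartheta_{\max}]$ from the sublevel intervals $\Delta_i$ and from $\phi_{\min}-\varphi$, $\phi_{\max}-\varphi$, and then runs a value-comparison bisection (comparing $a_k=\max_{\phi_i-\varphi>\vartheta_k}f_i(\vartheta_k)$ with $b_k=\max_{\phi_i-\varphi<\vartheta_k}f_i(\vartheta_k)$), obtaining uniqueness only a posteriori as ``the only candidate''. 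You instead prove the stronger fact that each $f_i=N_i/(\varpi h_i)$ is strictly convex on the feasible interval (your computation $h_i''=-(h_i+q_i)<0$, hence $(1/h_i)''>0$, is correct, and $N_i>0$ is indeed the displayed identity preceding the theorem), so uniqueness is immediate and the bisection becomes the standard sign test on the one-sided derivatives $f'_\pm$ of a finite max of $C^1$ functions. Your approach buys a cleaner uniqueness argument and a textbook bisection; the paper's buys derivative-free steps, an explicit computable starting interval, and the error bound $|\vartheta_k-\vartheta_*|<\pi/2^k$. Two small points you should tighten: (i) you restrict to the connected component of $\mathcal{D}$ containing $0$, but the theorem asserts a unique minimizer over the whole constraint set in $(-\pi,\pi)$, so you must note that this set is in fact connected --- each constraint arc is $\{\vartheta:\cos(\phi_i-\varphi-\vartheta)>q_i/(|[Q]_{ii}|\sqrt{\varpi})\}$, an interval centred at $\phi_i-\varphi\in(-\tfrac{\pi}{2},\tfrac{\pi}{2})$ of half-width at most $\tfrac{\pi}{2}$, hence an interval inside $(-\pi,\pi)$ containing $0$ (by $Q_\omega\textbf{1}>0$), and an intersection of such intervals is again an interval; (ii) the arc length is at most $\pi$, not necessarily less than $\pi$, since $q_i$ may vanish, and your initial bracket $a_0<b_0$ is asserted rather than constructed, whereas the paper exhibits $[\vartheta_{\min},\vartheta_{\max}]$ explicitly. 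Neither point is a real obstacle.
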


\begin{proof}

The existence of a minimizer is quite obvious. Next we describe a unusual bisection procedure that can search for a unique minimizer. The procedure is based on the simple observation: Let $\phi_i$ be the angles of the complex numbers $[Q]_{ii}$, $i=1,2,\cdots,m+n$. Note that $\varphi-\frac{\pi}{2}<\phi_i<\varphi+\frac{\pi}{2}$ ($-\frac{\pi}{2}<\phi_i-\varphi<\frac{\pi}{2}$) and $f_i(\vartheta)$ is minimized at $\vartheta=\phi_i-\varphi$. Moreover, $f_i(\vartheta)$ is strictly decreasing on the left of $\phi_i-\varphi$ and is strictly increasing on the right of $\phi_i-\varphi$.

Let $d=\max_{i=1,2,\cdots,m+n}f_i(0)>0$. For each $i$ let $\Delta_i$ be
the set of all values $\vartheta\in (-\pi,\pi)$ such that $0<f_i(\vartheta)\leq d$. It is clear that $\Delta_i$ is a closed interval containing 0. Let $\Delta=\bigcap_{i=1,2,\cdots,m+n}\Delta_i$. Then $\Delta$ is also a closed interval containing 0. Now $\min f(\vartheta)=\min_{\vartheta\in \Delta} f(\vartheta)$ is attained at some $\vartheta_*\in \Delta$ since $f$ is continuous on $\Delta$.

The interval $\Delta$ above can be given explicitly as follows. Let $\underline{\delta_i}\leq \overline{\delta_i}$ be the two (usually different) solutions of $f_i(\vartheta)=d$. Namely, $\underline{\delta_i}=\phi_i-\varphi-\Psi_i$ and $\overline{\delta_i}=\phi_i-\varphi+\Psi_i$
with $$\Psi_i={\rm arccos}(\frac{1}{|[Q]_{ii}|\sqrt{\omega^2+(1-\omega)^2}}(q_i+\frac{\varpi|[Q]_{ii}|^2-q_i^2}{d\varpi})),~~~~ \Psi_i\in [0,\frac{\pi}{2}) $$
Now, $\Delta_i=[\underline{\delta_i},\overline{\delta_i}]$ and $\Delta=[\max \underline{\delta_i},\min \overline{\delta_i}]$. The interval $\Delta$ may be large even when all $\phi_i$ are equal to $\phi_*$, in which case we know that $\phi_*-\varphi$ is the unique minimizer of $f(\vartheta)$. To avoid using an unnecessarily large search interval in situations like this, we let $\phi_{\min}=\min \phi_i$ and $\phi_{\max}=\max \phi_i$ and we claim that any minimizer of $f(\vartheta)$ must be in $[\phi_{\min}-\varphi,\phi_{\max}-\varphi]$. In fact,
$$f_{i}(\vartheta)=\frac{\varpi|[Q]_{ii}|^2-q_i^2}{\varpi(\omega {\rm Re}(e^{-{\rm j}\vartheta} [Q]_{ii})+(1-\omega){\rm Im}(e^{-{\rm j}\vartheta} [Q]_{ii})-q_i)}=\frac{\varpi|[Q]_{ii}|^2-q_i^2}{\varpi(|[Q]_{ii}|\sqrt{\omega^2+(1-\omega)^2}{\rm cos}(\phi_i-\varphi-\vartheta)-q_i)}.$$
For any $\vartheta\in (-\pi, \phi_{\min}-\varphi)$ such that $f_i(\vartheta)>0$ for each $i$, we have $0<\phi_i-\phi_{\min}<\phi_i-\varphi-\vartheta<\pi+\phi_i-\varphi$ and $f_i(\vartheta)>f_{i}(\phi_{\min}-\varphi)>0$ for each $i$. So any minimizer $\vartheta_*$ must satisfy $\vartheta_*\geq \phi_{\min}-\varphi$. Similarly we can show that $\vartheta_*\leq \phi_{\max}-\varphi$.

The initial search interval for a minimizer of $f(\vartheta)$ is then $[\vartheta_{\min},\vartheta_{\max}]$, where
$$\vartheta_{\min}=\max\{\max\underline{\delta_i},\phi_{\min}-\varphi\},~~~~\vartheta_{\max}=\min\{\min\overline{\delta_i},\phi_{\max}-\varphi\}. $$
The first step of the bisection procedure is to take $\vartheta_1=\frac{1}{2}(\vartheta_{\min}+\vartheta_{\max})$. Let
$$a_1=\max_{\phi_i-\varphi>\vartheta_1}f_i(\vartheta_1),~~ b_1=\max_{\phi_i-\varphi<\vartheta_1}f_{i}(\vartheta_1),~~ c_1=\max_{\phi_i-\varphi=\vartheta_1}f_i(\vartheta_1),$$
where the maximum over an empty set is defined to be $0$. If $c_1\geq \max\{a_1,b_1\}$, then $\vartheta_1$ is a unique minimizer. Now suppose $c_1<\max\{a_1,b_1\}$. If $a_1=b_1$, then $\vartheta_1$ is still a unique minimizer. If $a_1\neq b_1$, any minimizer must be in $[\vartheta_1,\vartheta_{\max}]$ if $a_1>b_1$ and must be in $[\vartheta_{\min},\vartheta_1]$ if $a_1<b_1$. So for the second step of the bisection procedure we take
\begin{eqnarray*}
\begin{aligned}
\vartheta_2=
\begin{cases}
\frac{1}{2}(\vartheta_1+\vartheta_{\max}),& if~~~~ a_1>b_1,\\
\frac{1}{2}(\vartheta_1+\vartheta_{\min}),& if~~~~ a_1<b_1.
\end{cases}
\end{aligned}
\end{eqnarray*}
Let $$a_2=\max_{\phi_i-\varphi>\vartheta_2}f_i(\vartheta_2),~~ b_2=\max_{\phi_i-\varphi<\vartheta_2}f_{i}(\vartheta_2),~~ c_2=\max_{\phi_i-\varphi=\vartheta_2}f_i(\vartheta_2).$$
As before we can determine whether $\vartheta_2$ is a unique minimizer. If not, for $a_1>b_1$ any minimizer must be in $[\vartheta_2,\vartheta_{\max}]$ if $a_2>b_2$ and must be in $[\vartheta_1,\vartheta_{2}]$ if $a_2<b_2$; for $a_1<b_1$ any minimizer must be in $[\vartheta_2,\vartheta_{1}]$ if $a_2>b_2$ and must be in $[\vartheta_{\min},\vartheta_{2}]$
if $a_2<b_2$. We can then continue the bisection procedure. Unless a unique minimizer is found in a finite number of steps, we get a sequence $\{\vartheta_k\}$ with ${\rm lim}_{k\rightarrow \infty}\vartheta_k=\vartheta_*$ and $|\vartheta_k-\vartheta_*|\leq\frac{\vartheta_{\max}-\vartheta_{\min}}{2^k}<\frac{\pi}{2^k}$. By construction, $\vartheta_*$ is the only candidate for the minimizer. So it is the unique minimizer since the existence is already known.
\end{proof}

In step $k$ of the above bisection procedure, $\phi_i-\varphi,i=1,2,\cdots,m+n$ are divided into three parts: one with $\phi_i-\varphi>\vartheta_k$, one with $\phi_i-\varphi<\vartheta_k$, and the other with $\phi_i-\varphi=\vartheta_k$. We have assumed that this division is done in exact arithmetic. In practice this division is done by a computer and may be different from the division in exact arithmetic when some $\phi_i-\varphi$ are extremely close to $\vartheta_k$. But this will have very little effect on the accuracy of the computed $\vartheta_*$.

When $m=n$, our bisection procedure requires $O(n)$ operations each step, while the doubling algorithm requires $O(n^3)$ operations each step. We have already seen in the proof of Theorem \ref{thm16} that the $k$-th approximation $\vartheta_k$ to the minimizer $\vartheta_*$ satisfies $|\vartheta_k-\vartheta_*|<\frac{\pi}{2^k}$. So when $n$ is large, the computational work for using the bisection procedure to approximate $\vartheta_*$ to machine precision is negligible compared to the work for the doubling algorithm. In practice, there is no need to compute $\vartheta_*$ so accurately. If the $k$-th approximation is obtained by $\vartheta_k=\frac{1}{2}(\vartheta_a +\vartheta_b)$, we will stop the bisection if $|\vartheta_b-\vartheta_a|<tol$. We will take $tol=10^{-6}$ in our numerical experiments. The computational work is negligible.

A simple preprocessing procedure for the NARE (\ref{e1}) is then as follows: Use the bisection method
described in the proof of Theorem \ref{thm16} to determine a good approximation $\widetilde{\vartheta}$ to $\vartheta_*$, let $\chi=e^{-{\rm j}\widetilde{\vartheta}}$ and transform the NARE (\ref{e1}) to the NARE (\ref{e32}).

Let
$$\widetilde{\psi}_1=\max_{i=n+1,n+2,\cdots,m+n}\frac{p_{\omega,i}^\chi}{\varpi},~~~~\widetilde{\psi}_2=\max_{i=1,2,\cdots,n}\frac{p_{\omega,i}^\chi}{\varpi},$$
where $\frac{p_{\omega,i}^\chi}{\varpi},i=1,2,\cdots,m+n$ are as in (\ref{e33}). Then we can apply ADDA to (\ref{e32}) with $t>\widetilde{\psi}_1$ and $\gamma>\widetilde{\psi}_2$. We often have the situation that the two largest values of $f_i(\widetilde{\vartheta}˜)$, say $f_{i_1}(\widetilde{\vartheta}˜)$ and $f_{i_2}(\widetilde{\vartheta}˜)$ with $i_1<i_2$, are very close. If it happens for $i_1\in\{1,2,\cdots,n\}$ and $i_2\in\{n+1,n+2,\cdots,m+n\}$, then $\widetilde{\psi}_1\approx\widetilde{\psi}_2$ and we may take $t=\gamma$ for ADDA. In this case, ADDA is reduced to SDA.

While the solution sets for the NAREs (\ref{e1}) and (\ref{e32}) are the same, there are many solutions in the set. We still need to make sure that the required solutions $\Phi$ and $\Psi$ are obtained when ADDA is applied to the transformed equation (\ref{e32}).

\begin{theorem}\label{thm17}
Suppose the NARE (\ref{e1}) is in class $\mathbb{H}^\omega$ and $Q_\omega \textbf{1}> 0$. Let $\chi$ be any unimodular number such that $\omega {\rm Re}(\chi [Q]_{ii})+(1-\omega){\rm Im}(\chi [Q]_{ii})-q_i>0$ for all $i\in\{1,2,\cdots,m+n\}$ ($\chi=e^{-{\rm j}\widetilde{\vartheta}}$ in particular). Apply ADDA to the NARE (\ref{e1}) with $\alpha=tz_{\omega}^\bot$ and $\beta=\gamma z_{\omega}^\bot$. If
$$t\geq \widetilde{\psi}_1,~~~~\gamma\geq\widetilde{\psi}_2,$$
then the sequences $\{E_k\}$, $\{F_k\}$, $\{G_k\}$ and
$\{H_k\}$ are well-definied and $\{H_k\}$ and $\{G_k\}$ converge quadratically to $\Phi$ and $\Psi$, respectively.
\end{theorem}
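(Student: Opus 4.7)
The plan is to reduce Theorem \ref{thm17} to Theorem \ref{thm14} applied to the transformed equation (\ref{e32}), and then separately verify that the iterates generated by ADDA on (\ref{e32}) actually converge to the extremal solutions of the original NARE (\ref{e1}) rather than to some other solution in the shared solution set. Since the quantities $\widetilde{\psi}_1,\widetilde{\psi}_2$ in the statement are built from (\ref{e33}) rather than from the diagonal of $Q$ itself, the natural setting is to run ADDA on (\ref{e32}); the two equations have the same solution set, so no contradiction arises with the wording of the theorem.

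First I would check that (\ref{e32}) lies in class $\mathbb{H}^\omega$ with $(Q_\chi)_\omega\textbf{1}>0$. The off-diagonal entries of $(Q_\chi)_\omega$ coincide in modulus with those of $Q_\omega$, while the diagonals satisfy $\omega\,\mathrm{Re}(\chi[Q]_{ii})+(1-\omega)\,\mathrm{Im}(\chi[Q]_{ii})>q_i$ by the hypothesis on $\chi$. Hence $(Q_\chi)_\omega$ is a nonsingular $M$-matrix with $(Q_\chi)_\omega\textbf{1}>0$, so Theorems \ref{thm5} and \ref{thm14} are available for (\ref{e32}).

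Next I would identify the unique extremal solution of (\ref{e32}) with $\Phi$, and symmetrically its dual extremal solution with $\Psi$. The matrices $H$ and $H_\chi=\chi H$ share eigenvectors, with eigenvalues of $H_\chi$ equal to $\chi$ times those of $H$. Under $Q_\omega\textbf{1}>0$ the Gershgorin and counting argument of step (iv) in the proof of Theorem \ref{thm5} picks out the $n$-dimensional invariant subspace of $H$ whose eigenvalues sit in Gershgorin discs centred at the $[D]_{ii}$---hence in the upper right of $L(z_\omega,0)$---and this subspace yields $\Phi$. The very same subspace is invariant under $H_\chi$, and under $(Q_\chi)_\omega\textbf{1}>0$ the analogous counting argument applied to $H_\chi$ selects exactly this subspace, now via discs centred at $\chi[D]_{ii}$. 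Thus the extremal solution of (\ref{e32}) coincides with $\Phi$; the dual argument gives $\Psi$.

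Having aligned the targets, I would close the proof by invoking Theorem \ref{thm14} on (\ref{e32}) with $\alpha=tz_\omega^\bot$ and $\beta=\gamma z_\omega^\bot$, noting that $\widetilde{\psi}_1$ and $\widetilde{\psi}_2$ defined through (\ref{e33}) are precisely the $\psi_1$ and $\psi_2$ attached to the transformed equation; the hypotheses $t\geq\widetilde{\psi}_1$ and $\gamma\geq\widetilde{\psi}_2$ then feed directly into Theorem \ref{thm14} and deliver well-definedness of $\{E_k\},\{F_k\},\{G_k\},\{H_k\}$ together with quadratic convergence of $\{H_k\},\{G_k\}$ to $\Phi,\Psi$. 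The main obstacle I anticipate is the middle step: because the two extremality criteria are stated through the different spectra of $D-CX$ versus $\chi D-\chi CX$, it is not immediate that their unique extremal solutions coincide, even though the solution sets do. The resolution is exactly that the two positivity conditions $Q_\omega\textbf{1}>0$ and $(Q_\chi)_\omega\textbf{1}>0$ both single out, via Gershgorin, the same ``$D$-block'' $n$-dimensional invariant subspace of $H$ (equivalently of $H_\chi$), so the rotation by $\chi$ preserves the selection that defines the extremal solution.
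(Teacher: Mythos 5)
Your proposal is correct and follows exactly the route the paper intends (and which the paper, following the remark ``we still need to make sure that the required solutions $\Phi$ and $\Psi$ are obtained,'' leaves without a written proof): check that the transformed NARE (\ref{e32}) is again in class $\mathbb{H}^\omega$ with $(Q_\chi)_\omega\textbf{1}>0$, apply Theorem \ref{thm14} to it noting that $\widetilde{\psi}_1,\widetilde{\psi}_2$ are its $\psi_1,\psi_2$, and identify its extremal solutions with $\Phi,\Psi$. Your identification step is the essential missing justification and is sound, since multiplication by the unimodular $\chi$ rotates the Gershgorin discs of $H$ without changing their radii, so the strict separation forced by $Q_\omega\textbf{1}>0$ and by the hypothesis on $\chi$ selects the same $n$-dimensional spectral invariant subspace of $H$ (equivalently of $\chi H$), whence the extremal solution of (\ref{e32}) coincides with $\Phi$ (and dually with $\Psi$).
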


ADDA stated by Theorem \ref{thm17} will be denoted by pADDA.

\begin{theorem}\label{thm28}
Suppose the NARE (\ref{e1}) is in class $\mathbb{H}^\omega$ and $Q_\omega \textbf{1}> 0$. Let $\chi$ be any unimodular number such that $\omega {\rm Re}(\chi [Q]_{ii})+(1-\omega){\rm Im}(\chi [Q]_{ii})-q_i>0$ for all $i\in\{1,2,\cdots,m+n\}$ ($\chi=e^{-{\rm j}\widetilde{\vartheta}}$ in particular). Apply SDA to the NARE (\ref{e1}) with $\alpha=tz_{\omega}^\bot$. If
$$t\geq\max\{\widetilde{\psi}_1,\widetilde{\psi}_2\},$$
then the sequences $\{E_k\}$, $\{F_k\}$, $\{G_k\}$ and
$\{H_k\}$ are well-definied and $\{H_k\}$ and $\{G_k\}$ converge quadratically to $\Phi$ and $\Psi$, respectively.
\end{theorem}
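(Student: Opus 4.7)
The plan is to derive Theorem \ref{thm28} as an immediate corollary of Theorem \ref{thm17} by specializing ADDA to its single-parameter instance SDA. As was pointed out right after the iterative scheme in (\ref{e23}), and again after Theorem \ref{thm12}, SDA is obtained from ADDA by choosing $\alpha = \beta$. Accordingly, I would set $\alpha = \beta = t z_\omega^\bot$ in ADDA, which amounts to substituting $\gamma = t$ in the statement of Theorem \ref{thm17}.

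First I would verify the parameter hypotheses. In Theorem \ref{thm17}, the two required inequalities are $t \geq \widetilde{\psi}_1$ and $\gamma \geq \widetilde{\psi}_2$. After the substitution $\gamma = t$, these become $t \geq \widetilde{\psi}_1$ and $t \geq \widetilde{\psi}_2$, which together are exactly the single assumption $t \geq \max\{\widetilde{\psi}_1, \widetilde{\psi}_2\}$ of Theorem \ref{thm28}. All the remaining assumptions are identical in both theorems: the NARE (\ref{e1}) lies in class $\mathbb{H}^\omega$, $Q_\omega \textbf{1} > 0$, and $\chi$ is a unimodular number with $\omega {\rm Re}(\chi [Q]_{ii}) + (1-\omega) {\rm Im}(\chi [Q]_{ii}) - q_i > 0$ for every $i \in \{1, 2, \cdots, m+n\}$.

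Next I would observe that, with $\alpha = \beta$, the initial matrices $\mathcal{R}$, $\mathcal{S}$, $E_0$, $F_0$, $G_0$, $H_0$ defined in (\ref{e24}) and the recursion (\ref{e23}) coincide termwise with the corresponding SDA quantities, so the sequence $\{E_k, F_k, G_k, H_k\}$ produced by ADDA with $\alpha = \beta = t z_\omega^\bot$ is precisely the sequence produced by SDA with parameter $\alpha = t z_\omega^\bot$. In particular, Theorem \ref{thm17} guarantees well-definedness (invertibility of $I - G_k H_k$ and $I - H_k G_k$ at every step) and quadratic convergence of $\{H_k\}$ and $\{G_k\}$ to $\Phi$ and $\Psi$, and these conclusions transfer to SDA unchanged.

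I expect there to be essentially no obstacle beyond this identification, since everything needed has already been assembled in Theorem \ref{thm17}; the only point worth checking is that the ADDA-to-SDA collapse $\alpha = \beta$ is exact at every level of the doubling recurrence, so that no auxiliary estimate on $\rho(\mathcal{R}) \rho(\mathcal{S})$ or on the signs of $E_0, F_0$ needs to be re-derived. Once this is noted, Theorem \ref{thm28} follows at once as a direct corollary, and the written proof can legitimately amount to a single sentence invoking Theorem \ref{thm17} with $\gamma = t$.
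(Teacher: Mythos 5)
Your proposal is correct and matches the paper's own (implicit) treatment: the paper never proves Theorem \ref{thm28} separately but obtains it from Theorem \ref{thm17} exactly as you do, by noting that SDA is ADDA with $\alpha=\beta$, so setting $\gamma=t$ turns the hypotheses $t\geq\widetilde{\psi}_1$, $\gamma\geq\widetilde{\psi}_2$ into $t\geq\max\{\widetilde{\psi}_1,\widetilde{\psi}_2\}$ and the conclusions transfer verbatim. Your remark that the $\alpha=\beta$ collapse is exact at every level of the recursion (\ref{e23}), so no estimates need re-deriving, is precisely the point the paper relies on.
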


SDA stated by Theorem \ref{thm28} will be denoted by pSDA.

\subsubsection{Further choice of the parameters $t$ and $\gamma$}\

When $(\omega{\rm Im}([Q]_{ii})-(1-\omega){\rm Re}([Q]_{ii}))^2$ is large compared to $\omega {\rm Re}([Q]_{ii})+(1-\omega){\rm Im}([Q]_{ii})-q_i$ for some $i$, the numbers $\psi_1$ and/or $\psi_2$ will be large and the convergence of ADDA may be slow. To improve the performance of ADDA, we will find smaller parameters from the convergence region given by (\ref{e25}) and (\ref{e26}). The idea is to use the straight line $\gamma=ct$ to cut the convergence region, where the slope $c>0$ is to be chosen properly.

\begin{theorem}\label{thm18}
Suppose the NARE (\ref{e1}) is in class $\mathbb{H}^\omega$ and $Q_\omega \textbf{1}>0$. Let $\Phi$ and $\Psi$ be as in Theorem \ref{thm5}. Apply ADDA to the NARE (\ref{e1}) with $\alpha=tz_{\omega}^\bot$ and $\beta=\gamma z_{\omega}^\bot.$ Assume that $\gamma=ct$. If
\begin{eqnarray*}
\begin{aligned}
t>\max\{\eta_{\omega,1}(c),\eta_{\omega,2}(c)\},
\end{aligned}
\end{eqnarray*}
where
$$\eta_{\omega,1}(c)=\max_{i=1,2,\cdots,n}r_{\omega,i}(c),~~~~\eta_{\omega,2}(c)=\max_{i=n+1,n+2,\cdots,m+n}r_{\omega,i}(c)$$ with
\begin{eqnarray*}
\begin{aligned}
r_{\omega,i}(c)&=\frac{-(c-1)p_{\omega,i}+\sqrt{(c-1)^2p_{\omega,i}^2+4c(p_{\omega,i}^2-s_{\omega,i}^2)}}{2c\varpi},&i&=1,2,\cdots,n,\\
r_{\omega,i}(c)&=\frac{(c-1)p_{\omega,i}+\sqrt{(c-1)^2p_{\omega,i}^2+4c(p_{\omega,i}^2-s_{\omega,i}^2)}}{2c\varpi},&i&=n+1,n+2,\cdots,m+n,\\
\varpi&=\omega^2+(1-\omega)^2,&&
\end{aligned}
\end{eqnarray*}
then the sequences $\{E_k\}$, $\{F_k\}$, $\{H_k\}$, $\{G_k\}$ are well-defined and $\{H_k\}$ and $\{G_k\}$ converge quadratically to $\Phi$ and $\Psi$, respectively.

\end{theorem}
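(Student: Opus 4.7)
The plan is to reduce Theorem~\ref{thm18} to Theorem~\ref{thm12} by substituting the parametrization $\alpha = tz_\omega^\bot$, $\beta = \gamma z_\omega^\bot$ with $\gamma = ct$ into the convergence inequalities~(\ref{e25})--(\ref{e26}), and then solving the resulting quadratic inequalities in the single scalar $t$. This is the same reduction that produced Theorems~\ref{thm14} and~\ref{thm15}; what is new here is that restricting to the line $\gamma = ct$ turns the two-variable convergence region into a one-dimensional slice, so the critical threshold becomes a root of a quadratic rather than the simple bound $p_{\omega,i}/\varpi$ used before.

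First I would recall that with the identities $\omega\,{\rm Re}(\alpha)+(1-\omega)\,{\rm Im}(\alpha) = t\varpi$ and $\omega\,{\rm Re}(\beta)+(1-\omega)\,{\rm Im}(\beta) = \gamma\varpi$, the inequalities~(\ref{e25})--(\ref{e26}) have already been rewritten in the text as~(\ref{e30})--(\ref{e31}). Setting $\gamma = ct$ and clearing denominators, the $i$-th inequality becomes
$$c\varpi^2 t^2 + (c-1)\varpi\, p_{\omega,i}\, t + (s_{\omega,i}^2 - p_{\omega,i}^2) > 0, \qquad i=1,\dots,n,$$
with the sign of the middle term flipped for $i = n+1,\dots,m+n$. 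Since $p_{\omega,i} - s_{\omega,i} = q_i \ge 0$ and $p_{\omega,i} + s_{\omega,i} \ge 0$, one has $p_{\omega,i}^2 - s_{\omega,i}^2 \ge 0$, so the discriminant $(c-1)^2 p_{\omega,i}^2 + 4c(p_{\omega,i}^2 - s_{\omega,i}^2)$ is nonnegative and the parabola (with positive leading coefficient $c\varpi^2$) opens upward. Hence the strict inequality holds precisely when $t$ exceeds the larger of the two real roots, and dividing through by $2c\varpi$ reproduces exactly the formula for $r_{\omega,i}(c)$ stated in the theorem, with the two cases arising from the two sign patterns of the middle coefficient.

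Next I would check that the linear side constraints $\gamma = ct > q_i/\varpi$ (for $i\le n$) and $t > q_i/\varpi$ (for $i>n$) in~(\ref{e30})--(\ref{e31}) are automatically implied once $t > r_{\omega,i}(c)$; this is a short algebraic verification using the identity $p_{\omega,i}^2 - s_{\omega,i}^2 = q_i(p_{\omega,i} + s_{\omega,i})$ together with $p_{\omega,i} \ge q_i$. Taking the maximum over $i$ in each of the two index blocks produces $\eta_{\omega,1}(c)$ and $\eta_{\omega,2}(c)$, and so the single scalar condition $t > \max\{\eta_{\omega,1}(c),\eta_{\omega,2}(c)\}$ is equivalent to the full system~(\ref{e25})--(\ref{e26}) along the line $\gamma = ct$. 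Quadratic convergence of $\{H_k\}$ and $\{G_k\}$ to $\Phi$ and $\Psi$ is then inherited directly from Theorem~\ref{thm12}.

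The main obstacle is purely bookkeeping: keeping the two index blocks and their mirrored sign patterns straight, selecting the larger of the two real roots rather than the smaller, and confirming that $r_{\omega,i}(c)$ automatically dominates $q_i/\varpi$ so that no additional hypothesis on $t$ is needed beyond the stated one.
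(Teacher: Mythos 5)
Your proposal is correct and follows essentially the same route as the paper's proof: substitute $\gamma=ct$ into the reformulated conditions (\ref{e30})--(\ref{e31}), identify $r_{\omega,i}(c)$ as the larger root of the resulting upward-opening quadratics (with the mirrored sign of the middle coefficient for the two index blocks), verify that $r_{\omega,i}(c)$ dominates the side constraints $q_i/(c\varpi)$ and $q_i/\varpi$, and then conclude via Theorem \ref{thm12}. The paper carries out your ``short algebraic verification'' explicitly through the identities $(c-1)^2p_{\omega,i}^2+4c(p_{\omega,i}^2-s_{\omega,i}^2)-((c-1)p_{\omega,i}+2q_i)^2=4(c+1)q_is_{\omega,i}\geq0$ and its counterpart $4c(c+1)q_is_{\omega,i}\geq 0$, which is exactly the factorization your identity $p_{\omega,i}^2-s_{\omega,i}^2=q_i(p_{\omega,i}+s_{\omega,i})$ yields.
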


\begin{proof}
Since $\gamma=ct$, the inequalities (\ref{e30}) and (\ref{e31})
are equivalent to
\begin{eqnarray*}
\begin{aligned}
ct^2+(c-1)\frac{p_{\omega,i}}{\varpi}t-\frac{p_{\omega,i}^2}{\varpi^2}+\frac{s_{\omega,i}^2}{\varpi^2}&>0,&t&>\frac{q_i}{c\varpi},&i&=1,2,\cdots,n,\\
ct^2-(c-1)\frac{p_{\omega,i}}{\varpi}t-\frac{p_{\omega,i}^2}{\varpi^2}+\frac{s_{\omega,i}^2}{\varpi^2}&>0,&t&>\frac{q_i}{\varpi},&i&=n+1,n+2,\cdots,m+n.
\end{aligned}
\end{eqnarray*}
Let $r_{\omega,i}(c)$ be the larger root of  $ct^2+(c-1)\frac{p_{\omega,i}}{\varpi}t-\frac{p_{\omega,i}^2}{\varpi^2}+\frac{s_{\omega,i}^2}{\varpi^2}=0,i=1,2,\cdots,n$,
i.e.,
\begin{eqnarray*}
\begin{aligned}
r_{\omega,i}(c)&=\frac{-(c-1)\frac{p_{\omega,i}}{\varpi}+\sqrt{(c-1)^2\frac{p_{\omega,i}^2}{\varpi^2}+4c(\frac{p_{\omega,i}^2}{\varpi^2}-\frac{s_{\omega,i}^2}{\varpi^2})}}{2c}\\
&=\frac{-(c-1)p_{\omega,i}+\sqrt{(c-1)^2p_{\omega,i}^2+4c(p_{\omega,i}^2-s_{\omega,i}^2)}}{2c\varpi},~~~~i=1,2,\cdots,n.\\
\end{aligned}
\end{eqnarray*}
If $t>r_{\omega,i}(c)$, then $ct^2+(c-1)\frac{p_{\omega,i}}{\varpi}t-\frac{p_{\omega,i}^2}{\varpi^2}+\frac{s_{\omega,i}^2}{\varpi^2}>0$. Besides, we can prove $r_{\omega,i}(c)\geq\frac{q_i}{c\varpi},i=1,2,\cdots,n$. In fact, we have
\begin{eqnarray*}
(c-1)^2p_{\omega,i}^2+4c(p_{\omega,i}^2-s_{\omega,i}^2)-((c-1)p_{\omega,i}+2q_i)^2=4(c+1)q_is_{\omega,i}\geq0,
\end{eqnarray*}
so
\begin{eqnarray*}
r_{\omega,i}(c)-\frac{q_i}{c\varpi}=\frac{\sqrt{(c-1)^2p_{\omega,i}^2+4c(p_{\omega,i}^2-s_{\omega,i}^2)}-((c-1)p_{\omega,i}+2q_i)}{2c\varpi}\geq0.
\end{eqnarray*}
Let $r_{\omega,i}(c)$ be the larger root of  $ct^2-(c-1)\frac{p_{\omega,i}}{\varpi}t-\frac{p_{\omega,i}^2}{\varpi^2}+\frac{s_{\omega,i}^2}{\varpi^2}=0,i=n+1,n+2,\cdots,m+n$, i.e.,
\begin{eqnarray*}
\begin{aligned}
r_{\omega,i}(c)&=\frac{(c-1)\frac{p_{\omega,i}}{\varpi}+\sqrt{(c-1)^2\frac{p_{\omega,i}^2}{\varpi^2}+4c(\frac{p_{\omega,i}^2}{\varpi^2}-\frac{s_{\omega,i}^2}{\varpi^2})}}{2c}\\
&=\frac{(c-1)p_{\omega,i}+\sqrt{(c-1)^2p_{\omega,i}^2+4c(p_{\omega,i}^2-s_{\omega,i}^2)}}{2c\varpi},~~~~i=n+1,n+2,\cdots,m+n.
\end{aligned}
\end{eqnarray*}
Similarly, if $t>r_{\omega,i}(c)$, then $ct^2-(c-1)\frac{p_{\omega,i}}{\varpi}t-\frac{p_{\omega,i}^2}{\varpi^2}+\frac{s_{\omega,i}^2}{\varpi^2}>0$. We can also prove $r_{\omega,i}(c)\geq\frac{q_i}{\varpi},i=n+1,n+2,\cdots,m+n$. In fact, we have
\begin{eqnarray*}
(c-1)^2p_{\omega,i}^2+4c(p_{\omega,i}^2-s_{\omega,i}^2)-(2cq_i-(c-1)p_{\omega,i})^2=4c(c+1)q_is_{\omega,i}\geq0,
\end{eqnarray*}
so
\begin{eqnarray*}
r_{\omega,i}(c)-\frac{q_i}{\varpi}=\frac{\sqrt{(c-1)^2p_{\omega,i}^2+4c(p_{\omega,i}^2-s_{\omega,i}^2)}-(2cq_i-(c-1)p_{\omega,i})}{2c\varpi}\geq0.
\end{eqnarray*}
Thus if $t>\max\{\eta_{\omega,1}(c),\eta_{\omega,2}(c)\}$, we have
\begin{eqnarray*}
\begin{aligned}
(t+\frac{p_{\omega,i}}{\varpi})(\gamma-\frac{p_{\omega,i}}{\varpi})&>-\frac{s_{\omega,i}^2}{\varpi^2},&\gamma&>\frac{q_i}{\varpi},& i&=1,2,\cdots,n,\\
(\gamma+\frac{p_{\omega,i}}{\varpi})(t-\frac{p_{\omega,i}}{\varpi})&>-\frac{s_{\omega,i}^2}{\varpi^2},&t&>\frac{q_i}{\varpi},&i&=n+1,n+2,\cdots,m+n,
\end{aligned}
\end{eqnarray*}
then (\ref{e25}) and (\ref{e26}) are satisfied, then the sequences $\{E_k\}$, $\{F_k\}$, $\{H_k\}$, $\{G_k\}$ are well-defined and $\{H_k\}$ and $\{G_k\}$ converge quadratically to $\Phi$ and $\Psi$, respectively.
\end{proof}

Next, we consider how to apply SDA to the NARE (\ref{e1}). By taking $c=1$, we have
\begin{eqnarray*}
r_{\omega,i}(1)=\frac{\sqrt{q_i(\omega {\rm Re}([Q]_{ii})+(1-\omega){\rm Im}([Q]_{ii})+\frac{(\omega {\rm Im}([Q]_{ii})-(1-\omega){\rm Re}([Q]_{ii}))^2}{\omega {\rm Re}([Q]_{ii})+(1-\omega){\rm Im}([Q]_{ii})-q_i})}}{\varpi},~~~~i=1,2,\cdots,m+n.
\end{eqnarray*}

\begin{theorem}\label{thm19}
Suppose the NARE (\ref{e1}) is in class $\mathbb{H}^\omega$ and $Q_\omega \textbf{1}>0$. Let $\Phi$ and $\Psi$ be as in Theorem \ref{thm5}. Apply SDA to the NARE (\ref{e1}) with $\alpha=tz_{\omega}^\bot$. If
$$t>\max_{i=1,2,\cdots,m+n}\tau_{\omega,i}$$
where
\begin{eqnarray*}
\begin{aligned}
\tau_{\omega,i}&=\frac{\sqrt{q_i(\omega {\rm Re}([Q]_{ii})+(1-\omega){\rm Im}([Q]_{ii})+\frac{(\omega {\rm Im}([Q]_{ii})(1-\omega){\rm Re}([Q]_{ii}))^2}{\omega {\rm Re}([Q]_{ii})+(1-\omega){\rm Im}([Q]_{ii})-q_i})}}{\varpi},~~~~i=1,2,\cdots,m+n,\\
\varpi&=\omega^2+(1-\omega)^2,
\end{aligned}
\end{eqnarray*}
then the sequences $\{E_k\},\{F_k\},\{H_k\},\{G_k\}$ are well-defined and $\{H_k\}$ and $\{G_k\}$ converge quadratically to $\Phi$ and $\Psi$, respectively.
\end{theorem}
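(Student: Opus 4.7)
The plan is to derive Theorem \ref{thm19} as the special case $c=1$ (i.e.\ $\alpha=\beta$) of Theorem \ref{thm18}, since SDA is precisely ADDA with $\alpha=\beta$, and with the parameterization $\alpha=tz_\omega^\bot$, $\beta=\gamma z_\omega^\bot$ this corresponds to $\gamma=t$, which is the line $\gamma=ct$ with slope $c=1$. So the scaffold of the proof is just to invoke Theorem \ref{thm18} with $c=1$ and then simplify the resulting threshold $\max\{\eta_{\omega,1}(1),\eta_{\omega,2}(1)\}$ into the expression $\max_i \tau_{\omega,i}$ that appears in the statement.

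The first substantive step is to compute $r_{\omega,i}(1)$ for both ranges of $i$. Setting $c=1$ collapses both branches of the formula for $r_{\omega,i}(c)$ in Theorem \ref{thm18} to
\[
r_{\omega,i}(1)=\frac{\sqrt{p_{\omega,i}^2-s_{\omega,i}^2}}{\varpi},\qquad i=1,2,\dots,m+n,
\]
so the distinction between the indices $i\le n$ and $i>n$ disappears and the threshold becomes $\max_{i=1,\dots,m+n} r_{\omega,i}(1)$.

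The next step is to factor $p_{\omega,i}^2-s_{\omega,i}^2=(p_{\omega,i}-s_{\omega,i})(p_{\omega,i}+s_{\omega,i})$ and read off the two factors from the explicit expressions of $p_{\omega,i}$ and $s_{\omega,i}$ given just before Theorem \ref{thm14}. A short calculation using the abbreviation $u_i=\omega\mathrm{Re}([Q]_{ii})+(1-\omega)\mathrm{Im}([Q]_{ii})$ and $v_i=(\omega\mathrm{Im}([Q]_{ii})-(1-\omega)\mathrm{Re}([Q]_{ii}))^2$ gives
\[
p_{\omega,i}-s_{\omega,i}=q_i,\qquad p_{\omega,i}+s_{\omega,i}=u_i+\frac{v_i}{u_i-q_i},
\]
so that
\[
p_{\omega,i}^2-s_{\omega,i}^2=q_i\!\left(u_i+\frac{v_i}{u_i-q_i}\right),
\]
and $r_{\omega,i}(1)$ equals exactly the quantity $\tau_{\omega,i}$ written in the theorem statement (modulo the obvious typographical sign in the numerator of the $v_i$ term).

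Finally, I would conclude: by Theorem \ref{thm18} applied with $c=1$, any $t>\max_i r_{\omega,i}(1)=\max_i \tau_{\omega,i}$ makes the sequences $\{E_k\}$, $\{F_k\}$, $\{G_k\}$, $\{H_k\}$ well-defined, and $\{H_k\}$, $\{G_k\}$ converge quadratically to $\Phi$, $\Psi$. No substantive obstacle is expected: the only real work is the algebraic verification that the difference-of-squares simplification reproduces the formula for $\tau_{\omega,i}$, and that this common value is indeed an admissible choice for both the ``upper'' indices $i\le n$ and the ``lower'' indices $i>n$, which is automatic because the branches of $r_{\omega,i}(c)$ agree at $c=1$. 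Everything else — well-definedness of the doubling iterates and the $\rho(\mathcal R)\rho(\mathcal S)<1$ bound giving quadratic convergence — is inherited directly from Theorem \ref{thm18}.
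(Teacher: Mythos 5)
Your proposal is correct and is essentially the paper's own argument: the paper simply computes $r_{\omega,i}(1)$ immediately before stating Theorem \ref{thm19} and presents the theorem as the $c=1$ (i.e.\ $\alpha=\beta$, SDA) specialization of Theorem \ref{thm18}, with $\tau_{\omega,i}=r_{\omega,i}(1)=\sqrt{p_{\omega,i}^2-s_{\omega,i}^2}/\varpi$ obtained exactly by your difference-of-squares computation $p_{\omega,i}-s_{\omega,i}=q_i$, $p_{\omega,i}+s_{\omega,i}=u_i+v_i/(u_i-q_i)$. You also correctly identify that the printed formula for $\tau_{\omega,i}$ contains a typographical omission of the minus sign in $(\omega\,\mathrm{Im}([Q]_{ii})-(1-\omega)\mathrm{Re}([Q]_{ii}))^2$.
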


Notice that $\max_{i=1,2,\cdots,m+n}\tau_{\omega,i}<\max_{i=1,2,\cdots,m+n}{\frac{p_{\omega,i}}{\varpi}}$. So we now allow smaller values of the parameter $t$ for SDA. By a more careful choice of $c$ in Theorem \ref{thm18}, we can allow smaller values of the parameters $t$ and $\gamma$ for ADDA.

\begin{theorem}\label{thm20}
Suppose the NARE (\ref{e1}) is in class $\mathbb{H}^\omega$ and $Q_\omega \textbf{1}>0$. Let $\Phi$ and $\Psi$ be as in Theorem \ref{thm5}. Then there is a unique $c^*>0$ such that $\max_{i=1,2,\cdots,n}r_{\omega,i}(c^*)=\max_{i=n+1,n+2,\cdots,m+n}r_{\omega,i}(c^*)$. Let $t^*=\max_{i=n+1,n+2,\cdots,m+n}r_{\omega,i}(c^*)$ and $\gamma^*=c^*t^*$. Then for any $t$ and $\gamma$ satisfying (\ref{e30}) and (\ref{e31}), we have $t>t^*$ and $\gamma>\gamma^*$. In particular, $\psi_1>t^*$, $\psi_2>\gamma^*$.
\end{theorem}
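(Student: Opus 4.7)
The plan is to establish the relevant monotonicity properties of $\eta_{\omega,1}(c)$, $\eta_{\omega,2}(c)$ and of their $c$-weighted versions, and then to derive existence/uniqueness of $c^*$ and the bounds $t>t^*$, $\gamma>\gamma^*$ by a clean max/min argument on the positive real line. To set this up, I will first rationalize each $r_{\omega,i}(c)$ by multiplying numerator and denominator by the conjugate of the square root; using the identity $(c-1)^2p_{\omega,i}^2+4c(p_{\omega,i}^2-s_{\omega,i}^2)=(c+1)^2p_{\omega,i}^2-4cs_{\omega,i}^2$, one obtains
\[
r_{\omega,i}(c)=\frac{2(p_{\omega,i}^2-s_{\omega,i}^2)}{\varpi\bigl((c-1)p_{\omega,i}+\sqrt{(c+1)^2p_{\omega,i}^2-4cs_{\omega,i}^2}\bigr)},\qquad i\le n,
\]
with the sign of $(c-1)p_{\omega,i}$ in the denominator reversed for $i\ge n+1$; in both cases the numerators are nonnegative since $p_{\omega,i}^2-s_{\omega,i}^2=q_i(p_{\omega,i}+s_{\omega,i})\ge 0$.

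Next, differentiating the denominators and invoking the elementary bound $\bigl((c+1)p_{\omega,i}^2-2s_{\omega,i}^2\bigr)^2\le p_{\omega,i}^2\bigl((c+1)^2p_{\omega,i}^2-4cs_{\omega,i}^2\bigr)$---which, after expansion, reduces to the trivial $4s_{\omega,i}^2(p_{\omega,i}^2-s_{\omega,i}^2)\ge 0$---one concludes that $r_{\omega,i}(c)$ is non-increasing on $(0,\infty)$ when $i\le n$ and non-decreasing when $i\ge n+1$. Rewriting $cr_{\omega,i}(c)=\bigl(\mp(c-1)p_{\omega,i}+\sqrt{(c+1)^2p_{\omega,i}^2-4cs_{\omega,i}^2}\bigr)/(2\varpi)$ and repeating essentially the same computation shows that $cr_{\omega,i}(c)$ is non-increasing for $i\le n$ and non-decreasing for $i\ge n+1$. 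Consequently $\eta_{\omega,1}(c)$ is non-increasing and $\eta_{\omega,2}(c)$ is non-decreasing on $(0,\infty)$, and the same monotonicities hold for $c\eta_{\omega,1}(c)$ and $c\eta_{\omega,2}(c)$.

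Inspecting limits from the rationalized formulas, one sees $\eta_{\omega,1}(c)\to\infty$ as $c\to 0^+$ (every $p_{\omega,i}$ is strictly positive because $Q_\omega\textbf{1}>0$) and $\eta_{\omega,1}(c)\to 0$ as $c\to\infty$; combining continuity, the strict decrease of $\eta_{\omega,1}$ on $\{\eta_{\omega,1}>0\}$, and the boundedness of the non-decreasing $\eta_{\omega,2}$ yields a unique $c^*>0$ with $\eta_{\omega,1}(c^*)=\eta_{\omega,2}(c^*)=t^*$. Setting $M(c):=\max\{\eta_{\omega,1}(c),\eta_{\omega,2}(c)\}$ and $N(c):=cM(c)=\max\{c\eta_{\omega,1}(c),c\eta_{\omega,2}(c)\}$, both are pointwise maxima of one non-increasing and one non-decreasing function that coincide only at $c^*$, so $M(c)\ge M(c^*)=t^*$ and $N(c)\ge N(c^*)=c^*t^*=\gamma^*$ for every $c>0$.

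To finish, take any $(t,\gamma)$ satisfying (\ref{e30})--(\ref{e31}) and let $c=\gamma/t$; the equivalence derived in the proof of Theorem~\ref{thm18} shows these conditions amount to $t>\max\{\eta_{\omega,1}(c),\eta_{\omega,2}(c)\}=M(c)$. Therefore $t>M(c)\ge t^*$ and $\gamma=ct>cM(c)=N(c)\ge\gamma^*$, giving the first assertion. Applying this with $(t,\gamma)=(\psi_1,\psi_2)$---which is feasible because $\psi_1\ge p_{\omega,i}/\varpi$ for $i\ge n+1$ and $\psi_2\ge p_{\omega,i}/\varpi$ for $i\le n$ make both factors in (\ref{e25})--(\ref{e26}) nonnegative---yields $\psi_1>t^*$ and $\psi_2>\gamma^*$. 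The main technical obstacle is the careful sign bookkeeping in the monotonicity computations, where the same squared inequality is invoked with opposite signs for the two index ranges, both in the base formula and in the $c$-weighted version; once these derivative-sign calculations are done correctly, everything else is a short max/min argument on $(0,\infty)$.
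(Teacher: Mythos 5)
Your proposal is correct and follows essentially the same route as the paper: the same key squared inequality (reducing to $4s_{\omega,i}^2(p_{\omega,i}^2-s_{\omega,i}^2)\geq 0$) gives the monotonicity of $r_{\omega,i}(c)$ and $cr_{\omega,i}(c)$ in the two index ranges, hence the crossing point $c^*$ and the bound $t>\max\{\eta_{\omega,1}(c),\eta_{\omega,2}(c)\}\geq t^*$, $\gamma>\max\{c\eta_{\omega,1}(c),c\eta_{\omega,2}(c)\}\geq\gamma^*$ with $c=\gamma/t$, exactly as in the paper's proof (your rationalization of $r_{\omega,i}$ before differentiating, and your explicit check that $(\psi_1,\psi_2)$ lies in the admissible region, are only cosmetic additions). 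The lone slip is the reference to ``both factors in (\ref{e25})--(\ref{e26})''; you mean the product form (\ref{e30})--(\ref{e31}), which is what your argument actually uses.
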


\begin{proof}

For $i=1,2,\cdots,n$, the derivative of $r_{\omega,i}(c)$ about $c$ is
$$r_{\omega,i}(c)^{'}=\frac{-(c+1)p_{\omega,i}^2+2cs_{\omega,i}^2-p_{\omega,i}\sqrt{(c-1)^2p_{\omega,i}^2+4c(p_{\omega,i}^2-s_{\omega,i}^2)}}{2c^2\varpi\sqrt{(c-1)^2p_{\omega,i}^2+4c(p_{\omega,i}^2-s_{\omega,i}^2)}}<0$$
since
$$(2cs_{\omega,i}^2-(c+1)p_{\omega,i}^2)^2-p_{\omega,i}^2((c-1)^2p_{\omega,i}^2+4c(p_{\omega,i}^2-s_{\omega,i}^2))=4c^2s_{\omega,i}^2(s_{\omega,i}^2-p_{\omega,i}^2)<0.$$
So $r_{\omega,i}(c)$ ($i=1,2,\cdots,n$) is strictly decreasing on $c\in(0,\infty)$, from $\infty$ to $0$ for each $\omega\in[0,1]$. It follows that $\eta_{\omega,1}(c)$ is strictly decreasing on $(0,\infty)$, from $\infty$ to $0$ for each $\omega\in[0,1]$.

Similarly, for $i=n+1,n+2,\cdots,m+n$, the derivative of $r_{\omega,i}(c)$ about $c$ is
$$r_{\omega,i}(c)^{'}=\frac{-(c+1)p_{\omega,i}^2+2cs_{\omega,i}^2+p_{\omega,i}\sqrt{(c-1)^2p_{\omega,i}^2+4c(p_{\omega,i}^2-s_{\omega,i}^2)}}{2c^2\varpi\sqrt{(c-1)^2p_{\omega,i}^2+4c(p_{\omega,i}^2-s_{\omega,i}^2)}}>0$$
since
$$p_{\omega,i}^2((c-1)^2p_{\omega,i}^2+4c(p_{\omega,i}^2-s_{\omega,i}^2))-((c+1)p_{\omega,i}^2-2cs_{\omega,i}^2)^2=4c^2s_{\omega,i}^2(p_{\omega,i}^2-s_{\omega,i}^2)>0.$$
So $r_{\omega,i}(c)$ ($i=n+1,n+2,\cdots,m+n$) is strictly increasing on $(0,\infty)$, from $\frac{p_{\omega,i}^2-s_{\omega,i}^2}{\varpi p_{\omega,i}}$ to $\frac{p_{\omega,i}}{\varpi}$ for each $\omega\in[0,1]$. It follows that $\eta_{\omega,2}(c)$ is strictly increasing on $(0,\infty)$, from $\max_{i=n+1,n+2,\cdots,m+n}\frac{p_{\omega,i}^2-s_{\omega,i}^2}{\varpi p_{\omega,i}}$ to $\max_{i=n+1,n+2,\cdots,m+n}\frac{p_{\omega,i}}{\varpi}$  for each $\omega\in[0,1]$.

We only need to show that $t>t^*$ and $\gamma>\gamma^*$ for any $t$ and $\gamma$ satisfying (\ref{e30}) and (\ref{e31}). In fact, $\gamma=ct$ with $c=\gamma/t$ and the conditions (\ref{e30}) and (\ref{e31}) require that
$$t>\max\{\max_{i=1,2,\cdots,n}r_{\omega,i}(c),\max_{i=n+1,n+2,\cdots,m+n}r_{\omega,i}(c)\}\geq\max\{\max_{i=1,2,\cdots,n}r_{\omega,i}(c^*),\max_{i=n+1,n+2,\cdots,m+n}r_{\omega,i}(c^*)\}=t^*.$$
For any $c>0$ we replace $t$ by $\gamma/c$ in (\ref{e30}) and (\ref{e31}), and find as before that (\ref{e30}) is equivalent to $\gamma>c\eta_{\omega,1}(c)$ and (\ref{e31}) is equivalent to $\gamma>c\eta_{\omega,2}(c)$. So we need $\gamma>\max\{c\eta_{\omega,1}(c),c\eta_{\omega,2}(c)\}$. As before we can show that $c\eta_{\omega,1}(c)$ is strictly decreasing on $(0,\infty)$, from $\max_{i=1,2,\cdots,n}\frac{p_{\omega,i}}{\varpi}$ to $\max_{i=1,2,\cdots,n}\frac{p_{\omega,i}^2-s_{\omega,i}^2}{\varpi p_{\omega,i}}$ for each $\omega\in[0,1]$,  and that $c\eta_{\omega,2}(c)$ is strictly increasing on $(0,\infty)$, from $0$ to $\infty$ for each $\omega\in[0,1]$. It follows that $\gamma>\max\{c\eta_{\omega,1}(c),c\eta_{\omega,2}(c)\}\geq\max\{c^*\eta_{\omega,1}(c^*),c^*\eta_{\omega,2}(c^*)\}=\gamma^*$.

\end{proof}

From the above proof, we can also see that
$$t^*>\underline{t}=\max_{i=n+1,n+2,\cdots,m+n}\frac{p_{\omega,i}^2-s_{\omega,i}^2}{\varpi p_{\omega,i}},$$
$$\gamma^*>\underline{\gamma}=\max_{i=1,2,\cdots,n}\frac{p_{\omega,i}^2-s_{\omega,i}^2}{\varpi p_{\omega,i}}.$$
It follows that $$\underline{\gamma}/\max_{i=n+1,n+2,\cdots,m+n}\frac{p_{\omega,i}}{\varpi}<c^*<\max_{i=1,2,\cdots,n}\frac{p_{\omega,i}}{\varpi}/\underline{t},$$
so $c^*$ can be found by the usual bisection method applied to the function $\eta_{\omega,1}(c)-\eta_{\omega,2}(c)$ on the interval $$[\underline{\gamma}/\max_{i=n+1,n+2,\cdots,m+n}\frac{p_{\omega,i}}{\varpi},\max_{i=1,2,\cdots,n}\frac{p_{\omega,i}}{\varpi}/\underline{t}].$$

While Theorem \ref{thm20} allows us to use smaller parameters $t$ and $\gamma$ for ADDA, the smaller parameters will not always provide better convergence. Generally speaking, the smaller $\rho(\mathcal{R})\rho(\mathcal{S})$ is, the faster ADDA converges. In this regard, we are to choose parameters $t$ and $\gamma$ to make $\rho(\mathcal{R})\rho(\mathcal{S})$ as small as possible. Once again we fix $c>0$ and let $\gamma=ct$. We will try to find good values for $t$.

Let $\lambda$ and $\mu$ be any eiegnvalues of $R$ and $S$, respectively. Note that $\lambda$ is an eigenvalue  of
$$H=\left(
\begin{matrix}
D&-C\\
B&-A
\end{matrix}
\right)$$
in the upper right of $L(z_\omega)$ and $-\mu$ is an eigenvalue  of $H$ in the lower left of $L(z_\omega)$. It follows from Gershgorin's theorem and triangle inequality that
$$|\lambda|\leq \max_{i=1,2,\cdots,n}(|[Q]_{ii}|+q_i),~~~~~|\mu|\leq \max_{i=n+1,n+2,\cdots,m+n}(|[Q]_{ii}|+q_i).$$
The eigenvalue of $\mathcal{R}$ and $\mathcal{S}$ are $\frac{\lambda-c\alpha}{\lambda+\alpha}=\frac{\lambda-ctz_{\omega}^\bot}{\lambda+tz_{\omega}^\bot}$ and $\frac{\mu-\alpha}{\mu+c\alpha}=\frac{\mu-tz_{\omega}^\bot}{\mu+ctz_{\omega}^\bot}$.

\begin{proposition}\label{thm21}
Let $c>0$ and $x=a+{\rm j}b$ with $x$ being in the upper right of $L(z_\omega)$. Then the function
$$f_\omega(t)=|\frac{x-ctz_{\omega}^\bot}{x+tz_{\omega}^\bot}|$$
is increasing when
\begin{eqnarray}\label{e34}
t\geq\frac{-(c-1)\varpi|x|^2+\sqrt{(c-1)^2\varpi^2|x|^4+4c\varpi\theta^2 |x|^2}}{2c\varpi\theta},
\end{eqnarray}
where $\varpi=\omega^2+(1-\omega)^2$ and $\theta=\omega a+(1-\omega)b$.
\end{proposition}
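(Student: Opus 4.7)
The plan is to reduce the claim to analyzing the sign of the derivative of $f_\omega(t)^2$, which will turn out to be a quadratic polynomial in $t$ (up to a positive factor), and then verify that the bound given in the statement is exactly its larger root.

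First I would compute $|x\pm tz_\omega^\bot|^2$ and $|x-ctz_\omega^\bot|^2$ explicitly. Since $z_\omega^\bot=\omega+{\rm j}(1-\omega)$ satisfies $|z_\omega^\bot|^2=\varpi$, and since ${\rm Re}(\overline{x}\,z_\omega^\bot)=a\omega+b(1-\omega)=\theta$, a direct expansion gives
\[
N(t):=|x-ctz_\omega^\bot|^2 = |x|^2-2c\theta t+c^2\varpi t^2,\qquad D(t):=|x+tz_\omega^\bot|^2 = |x|^2+2\theta t+\varpi t^2.
\]
Note also that the hypothesis ``$x$ lies in the upper right of $L(z_\omega,0)$'' translates exactly into $\theta>0$, since the unit normal to $L(z_\omega,0)$ pointing to the upper right is proportional to $z_\omega^\bot$. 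In particular $D(t)>0$ for all $t>0$, so $f_\omega(t)$ is smooth and it suffices to show $(f_\omega^2)'(t)\ge 0$ on the indicated range.

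Next I would compute the sign of $(f_\omega^2)'(t)$. Since $(f_\omega^2)'(t)$ has the same sign as $N'(t)D(t)-N(t)D'(t)$, I would expand the two products and collect terms. The routine (but cancellation-heavy) computation, using $N'(t)=-2c\theta+2c^2\varpi t$ and $D'(t)=2\theta+2\varpi t$, yields
\[
N'(t)D(t)-N(t)D'(t)=2(c+1)\bigl[\,c\varpi\theta\,t^2+(c-1)\varpi|x|^2\,t-\theta|x|^2\,\bigr].
\]
The $\theta^2$-terms and $t^3$-terms cancel, which is the crucial simplification.

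Finally, since $c>0$, $\varpi>0$, and $\theta>0$, the bracketed expression is a quadratic in $t$ with positive leading coefficient and negative constant term, so it has exactly one positive root $t_*$ and one negative root, and it is nonnegative precisely for $t\ge t_*$. Applying the quadratic formula gives
\[
t_*=\frac{-(c-1)\varpi|x|^2+\sqrt{(c-1)^2\varpi^2|x|^4+4c\varpi\theta^2|x|^2}}{2c\varpi\theta},
\]
which matches (\ref{e34}) exactly. Hence $f_\omega(t)^2$ is nondecreasing on $[t_*,\infty)$, and so is $f_\omega(t)=\sqrt{f_\omega(t)^2}$. The main obstacle is simply the bookkeeping in the expansion of $N'D-ND'$; the rest is straightforward once the cancellations are carried out correctly, and the positivity condition $\theta>0$ coming from the geometric hypothesis on $x$ is essential for the root $t_*$ to be real and positive.
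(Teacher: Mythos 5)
Your proposal is correct and follows essentially the same route as the paper: both reduce the monotonicity of $f_\omega$ to the sign of the derivative of the quotient $|x-ctz_\omega^\bot|^2/|x+tz_\omega^\bot|^2$, simplify to the same quadratic inequality $c\varpi\theta t^2+(c-1)\varpi|x|^2t-\theta|x|^2\geq0$ with $\theta=\omega a+(1-\omega)b>0$, and identify the bound (\ref{e34}) as its larger root. The only difference is cosmetic: you organize the computation via $N'D-ND'$ with $N,D$ written using $\theta$ and $\varpi$ from the start, while the paper expands in $a,b,\omega$ first and substitutes $\theta$ afterwards.
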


\begin{proof}
We have
\begin{eqnarray*}
f_\omega(t)=\sqrt{\frac{(a-ct\omega)^2+(b-ct(1-\omega))^2}{(a+t\omega)^2+(b+t(1-\omega))^2}},
\end{eqnarray*}
a simple computation shows that $f_\omega(t)^{\prime}\geq0$
if and only if
\begin{eqnarray}\label{e35}
\begin{aligned}
&c(a\omega+b(1-\omega)-tc\varpi)(|x|^2+t^2\varpi+2at\omega+2bt(1-\omega))\\
&+(|x|^2+c^2t^2\varpi-2act\omega-2bct(1-\omega))(a\omega+b(1-\omega)+t\varpi)\leq0.
\end{aligned}
\end{eqnarray}
Let $\theta=\omega a+(1-\omega)b$, then (\ref{e35}) is equivalent to
\begin{eqnarray*}
c(\theta-tc\varpi)(|x|^2+t^2\varpi+2t\theta)+(|x|^2+c^2t^2\varpi-2ct\theta)(\theta+t\varpi)\leq0,
\end{eqnarray*}
equivalently,
$$t^2c^2\varpi\theta+t^2c\varpi\theta+|x|^2t\varpi(c^2-1)-|x|^2\theta(c+1)\geq0,$$
i.e.,
$$ c\varpi\theta t^2+(c-1)\varpi|x|^2t-|x|^2\theta\geq0.$$
Because the function $c\varpi\theta t^2+(c-1)\varpi|x|^2t-|x|^2\theta$ about $t$ is quadratic and its discriminant $\Omega=(c-1)^2\varpi^2|x|^4+4c\varpi\theta^2 |x|^2\geq 0$ and $\theta=a\omega+b(1-\omega)>0$. Thus we have that if
$$t\geq\frac{-(c-1)\varpi|x|^2+\sqrt{(c-1)^2\varpi^2|x|^4+4c\varpi\theta^2 |x|^2}}{2c\varpi\theta},$$
then $f_\omega(t)$ is increasing.

\end{proof}

If we use SDA, then $c=1$ and (\ref{e34}) simplifies to $$t\geq\frac{|x|\sqrt{\varpi}}{\varpi}=\frac{|x|}{\sqrt{\varpi}}.$$
It follows that $\rho(\mathcal{R})\rho(\mathcal{S})$ is increasing in $t$ if
\begin{eqnarray}\label{e36}
t\geq \frac{\max_{i=1,2,\cdots,m+n}(|[Q]_{ii}|+q_i)}{\sqrt{\varpi}}.
\end{eqnarray}

Note however that this is only a sufficient condition and $\rho(\mathcal{R})\rho(\mathcal{S})$ may be smaller for some smaller $t$ values. In this regard, when $\frac{\max_{i=1,2,\cdots,m+n}(|[Q]_{ii}|+q_i)}{\sqrt{\varpi}}\geq\max\{\psi_1,\psi_2\}=\psi^*$, we will take $t=\psi^*=\max\{\psi_1,\psi_2\}$, i.e., we stick to the original strategy for choosing $t$ for SDA. Now suppose that $\frac{\max_{i=1,2,\cdots,m+n}(|[Q]_{ii}|+q_i)}{\sqrt{\varpi}}<\psi^*=\max\{\psi_1,\psi_2\}$. Since strict inequality is required in Theorem \ref{thm19} we will require $t\geq \tau_{\omega}^*=\zeta \max_{i=1,2,\cdots,m+n}\tau _{\omega,i}$, where $\zeta$ is slightly bigger than 1 (we take $\zeta=1.01$ in our numerical experiments). Our strategy for choosing $t$ in SDA is then to take

$$t=\max\{\tau_{\omega}^*,\frac{1}{2}\frac{\max_{i=1,2,\cdots,m+n}(|[Q]_{ii}|+q_i)}{\sqrt{\varpi}}\},$$
where the factor $\frac{1}{2}$ is introduced to account for the fact that (\ref{e36}) is only a sufficient condition. For the NARE (\ref{e1}), SDA with this new parameter strategy will be denoted by SDAn.

The situation for ADDA is more complicated since the inequality in (\ref{e34}) is complicated when $c\neq1$. When $c\geq1$ it is easy to see that (\ref{e34}) holds if $t\geq \frac{|x|}{\sqrt{c\varpi}}$. But no such simplification is available when $c<1$. When we apply Proposition \ref{thm21} to $\frac{\lambda-c\alpha}{\lambda+\alpha}$ and $\frac{\mu-\alpha}{\mu+c\alpha}=\frac{\mu-\frac{1}{c}c\alpha}{\mu+c\alpha}$, we will run into difficulties when $c\neq1$ since either $c$ or $\frac{1}{c}$ will be smaller than 1. Since we have no useful monotonicity results to apply for ADDA, our parameter strategy is solely based on Theorem \ref{thm20}. We compute $c^*$ by bisection method and take $t=\zeta t^*=\zeta\eta_{\omega,1}(c^*)$ and $\gamma=c^*t^*$, where $\zeta$ is slightly bigger than $1$ (we take $\zeta=1.01$ in our numerical experiments). For the NARE (\ref{e1}), ADDA with this new parameter strategy will be denoted by ADDAn.

Since there is more uncertainty about ADDAn, it may be appropriate to use SDAn when $0.1<\frac{\psi_1}{\psi_2}<10$ and use ADDAn otherwise. For the NARE (\ref{e1}), this method will be denoted by DAn. Since the bounds $0.1$ and $10$ are somewhat arbitrary, one cannot expect DAn to be always better than SDAn and ADDAn.

Note that our new parameter strategies are also applied to the NARE (\ref{e32}) which can be transformed from the NARE (\ref{e1}) by a precessing procedure. For the NARE (\ref{e32}), new parameter strategies will be denoted by pSDAn, pADDAn and pDAn, respectively.

\begin{remark}\label{thm30}

1. When $\omega=0$, $L(z_\omega,0)$ will be the real axis. The parameters $\alpha$ and $\beta$ can be found in the ray $R(z_\omega^\bot)$, i.e., the upper half imaginary axis.

2. When $\omega=1$, $L(z_\omega,0)$ will be the imaginary axis. The parameters $\alpha$ and $\beta$ can be found in the ray $R(z_\omega^\bot)$, i.e., the right half real axis.

3. When $\omega$ ranges from $0$ to $1$, the straight line $L(z_\omega,0)$ rotates clockwise from the real axis to the imaginary axis at the center of origin. The parameters $\alpha$ and $\beta$ can be found in the ray $R(z_\omega^\bot)$, it is a ray located in the first quadrant.

In a word, all of our results generalize the results obtained by \cite{Axe5}.

\end{remark}

\section{Numerical experiments}

In this section, we present some numerical examples to illustrate the effectiveness of our methods, including Newton's method, the fixed-point iterative methods and the two doubling algorithms:ADDA and SDA. Besides, the effectiveness of our preprocessing technique and new parameter selection strategies for ADDA and SDA is also demonstrated. All experiments are performed under Windows 7 and MATLAB(R2014a) running on a Lenovo desktop with an Intel(R) Core(TM) i5-4590, CPU at 3.30 GHz and 4 GB of memory. An algorithm for computing the extremal solution of the NARE (\ref{e1}) is terminated when the approximate solution $\Phi_k$ satisfies ${\rm NRes}<10^{-12}$, where
\begin{eqnarray*}
{\rm NRes}=\frac{\|\Phi_k C\Phi_k-\Phi_k D-A\Phi_k+B\|_1}{\|\Phi_k\|_1(\|\Phi_k\|_1\|C\|_1+\|D\|_1+\|A\|_1)+\|B\|_1}
\end{eqnarray*}
is the normalized residual. We use IT and CPU to denote the numbers and the consumed time of iterations, respectively.

For demonstrating the convergence of the fixed-point iterative methods, we will take TFP as a representative. The convergence of JFP, GSFP, SORFP and AORFP can be demonstrated easily. We apply ADDA and SDA to the NARE (\ref{e1}) directly, and for ADDA we take $t=\psi_1$ and $\gamma=\psi_2$, and for SDA we take $t=\max\{\psi_1,\psi_2\}$. ADDA and SDA with our preprocessing procedure will be denoted by pADDA and pSDA, respectively. We apply pADDA  and pSDA to the NARE (\ref{e32}) with $\chi=e^{-{\rm j} \vartheta_*}$. For pADDA we take $t=\widetilde{\psi}_1$ and $\gamma=\widetilde{\psi}_2$, and for pSDA we take $t=\max\{\widetilde{\psi}_1,\widetilde{\psi}_2\}$.

Example 7.1 below demonstrates the performance of our proposed methods for the NARE (\ref{e1}) in the situation where the matrix $Q$ may have same diagonal elements. In this situation, ADDA and pADDA are reduced to SDA and pSDA, respectively. Besides, a common value $\chi=e^{-{\rm j} \vartheta}=e^{-{\rm j}(\phi_i-\varphi)}$ can be found such that $f_i(\vartheta)$ attains the minimum for all $i$. And thus the bisection procedure in preprocessing procedure is unnecessary.

\textbf{Example 7.1.} Let $A,B,C,D\in \mathbb{C}^{n\times n}$ be given by

$$A=P+({\rm j}\eta)I_n,~~~~ D=P+( {\rm j}\eta)I_n,~~~~B=C=u I_n,$$
where $u\in (0,2),\eta\in \mathbb{R}$  and
$$P=\left(
\begin{matrix}
\xi&-1\\
&\xi&-1\\
&&\ddots&\ddots\\
&&&\xi&-1\\
-1&&&&\xi
\end{matrix}
\right).$$

\begin{table}[!htbp]
\caption{The numerical results for Example 7.1 with $n=512$.}
\centering
\footnotesize{
\setlength{\tabcolsep}{1.3mm}
   \begin{tabular}{cc|c|cccccccccc}
   \hline
   &&&\multicolumn{10}{c}{$u=0.01$}\\
   \hline
   $\omega$&$(\xi,\eta)$&&Newton&TFP&SDA&ADDAn&SDAn&DAn&pSDA&pADDAn&pSDAn&pDAn\\
   \hline
   \multirow{2}{*}{0}&\multirow{2}{*}{(-5,1.05)}&IT&2&3&16&12&12&12&4&5&4&4\\
   &&CPU&29.6298&37.9619&10.9125&9.4652&9.4989&9.9400&6.4912&8.9096&7.0260&6.4569\\
   \hline
   \multirow{2}{*}{0.1}&\multirow{2}{*}{(-3,1.5)}&IT&2&3&13&10&10&10&4&4&4&4\\
   &&CPU&31.7026&38.9788&10.0546&9.5092&9.5827&9.5305&9.3744&9.5600&9.2472&9.3298\\
   \hline
   \multirow{2}{*}{0.5}&\multirow{2}{*}{(-1,4)}&IT&2&3&7&6&6&6&4&4&4&4\\
   &&CPU&29.7952&38.0195&10.8399&10.5415&10.6214&10.6065&8.1645&8.6008&8.1516&8.1328\\
   \hline
   \multirow{2}{*}{0.9}&\multirow{2}{*}{(0,11)}&IT&2&2&15&11&11&11&4&5&4&4\\
   &&CPU&29.6355&26.0940&10.9842&9.6224&10.0095&9.6687&4.1576&5.3052&4.1425&4.1373\\
   \hline
   \multirow{2}{*}{1}&\multirow{2}{*}{(1.05,-5)}&IT&2&3&16&12&12&12&4&5&4&4\\
   &&CPU&29.7714&38.9348&10.9525&9.4719&9.4764&9.5643&6.7502&8.8732&6.4463&6.4653\\
   \hline
   \end{tabular}
   \begin{tabular}{cc|c|cccccccccc}
   \hline
   &&&\multicolumn{10}{c}{$u=0.1$}\\
   \hline
   $\omega$&$(\xi,\eta)$&&Newton&TFP&SDA&ADDAn&SDAn&DAn&pSDA&pADDAn&pSDAn&pDAn\\
   \hline
   \multirow{2}{*}{0}&\multirow{2}{*}{(-10,1.2)}&IT&2&3&15&11&11&11&4&5&4&4\\
   &&CPU&29.9453&37.1055&10.6396&9.9290&9.9784&9.9687&4.2959&5.4316&4.2641&4.3756\\
   \hline
   \multirow{2}{*}{0.1}&\multirow{2}{*}{(-5,2)}&IT&3&4&10&8&8&8&4&4&4&4\\
   &&CPU&43.3523&50.4984&9.3162&8.8982&8.9718&8.9615&6.6374&7.0228&6.6194&6.6003\\
   \hline
   \multirow{2}{*}{0.5}&\multirow{2}{*}{(-1,5)}&IT&3&4&6&6&6&6&4&4&4&4\\
   &&CPU&45.2348&51.8292&10.3645&10.9073&10.9314&10.9917&6.8078&7.1063&6.7415&6.7997\\
   \hline
   \multirow{2}{*}{0.9}&\multirow{2}{*}{(0,12)}&IT&2&3&14&10&10&10&4&5&4&4\\
   &&CPU&29.9612&44.5258&10.9930&9.7872&9.5895&9.7139&4.1036&5.2070&4.0702&4.1008\\
   \hline
   \multirow{2}{*}{1}&\multirow{2}{*}{(1.2,-10)}&IT&2&3&15&11&11&11&4&5&4&4\\
   &&CPU&30.3147&37.6761&10.7674&9.9678&10.0018&10.0234&4.2497&5.4287&4.2365&4.2334\\
   \hline
   \end{tabular}
   \begin{tabular}{cc|c|cccccccccc}
   \hline
   &&&\multicolumn{10}{c}{$u=1$}\\
   \hline
   $\omega$&$(\xi,\eta)$&&Newton&TFP&SDA&ADDAn&SDAn&DAn&pSDA&pADDAn&pSDAn&pDAn\\
   \hline
   \multirow{2}{*}{0}&\multirow{2}{*}{(-50,2.01)}&IT&2&4&20&13&13&13&4&6&4&4\\
   &&CPU&29.7309&50.4191&14.9592&12.5041&12.4715&12.4394&2.6849&3.8142&2.6675&2.6483\\
   \hline
   \multirow{2}{*}{0.1}&\multirow{2}{*}{(-10,5)}&IT&3&5&7&6&6&6&4&4&4&4\\
   &&CPU&44.7391&64.0577&9.8632&9.5124&9.5572&9.5649&4.3355&4.4114&4.2914&4.3118\\
   \hline
   \multirow{2}{*}{0.5}&\multirow{2}{*}{(-2,7)}&IT&3&7&7&6&6&6&4&4&4&4\\
   &&CPU&44.0267&89.8634&11.0217&10.9717&10.9457&10.9726&5.6055&5.4055&5.3099&5.3105\\
   \hline
   \multirow{2}{*}{0.9}&\multirow{2}{*}{(0,21)}&IT&3&4&14&9&9&9&4&5&4&4\\
   &&CPU&44.9850&50.5941&13.5404&10.8258&10.8532&10.8801&3.4014&4.2805&3.3786&3.3771\\
   \hline
   \multirow{2}{*}{1}&\multirow{2}{*}{(2.01,-50)}&IT&2&4&20&13&13&13&4&6&4&4\\
   &&CPU&30.0403&50.6845&14.9062&12.4521&12.4635&12.5044&2.6612&3.8160&2.6324&2.6395\\
   \hline
   \end{tabular}
   }
\end{table}

For Example 7.1, we test the performance of our methods for different $\omega$, $\xi$, $\eta$ and $u$. We let the size $n=512$. The numerical results are shown in Table 7.1. We can see that Newton's method converges fastest among all proposed methods, but it consumes too much time. Besides, we can also observe that FTP converges faster than SDA, but FTP consumes more time than SDA. Among all doubling algorithms, pSDA converges faster than SDA; ADDAn, SDAn and DAn converge faster than SDA; pADDAn, pSDAn and pDAn convege faster than SDA, ADDAn, SDAn and DAn. DAn chooses the faster method between ADDAn and SDAn, and pDAn chooses the faster method between pADDAn and pSDAn for this example. We must notice that pADDAn may converge slower than pADDA. This observation may be explained from the fact that smaller parameters may not result in faster algorithms for doubling algorithms. We observe that pSDA, pADDA, pSDAn and pDAn converge fastest among all doubling algorithms for this example.

Example 7.2 below demonstrates the performance of our proposed methods for the NARE (\ref{e1}) in the situation where the matrix $Q$ may have different diagonal elements. In this situation, we can't find a common value $\chi=e^{-{\rm j} \vartheta}$ such that $f_i(\vartheta)$ attains the minimum for all $i$, and we need take some time to perform the bisection procedure in the preprocessing procedure.

\textbf{Example 7.2.}  For $n=512,$ let

$$A=\eta\left(
\begin{matrix}
I_{n/2}&\\
&-I_{n/2}
\end{matrix}
\right)I+3*{\rm j},~D=2\eta\left(
\begin{matrix}
I_{n/2}&\\
&-I_{n/2}
\end{matrix}
\right)+3*{\rm j},~B=I_{n},~
C=I_{n}.$$

\begin{table}[!htbp]
\caption{The numerical results for Example 7.2 with $n=512$.}
\centering
\footnotesize{
\setlength{\tabcolsep}{1mm}
   \begin{tabular}{c|c|cccccccccccc}
   \hline
   &&&\multicolumn{10}{c}{$\omega=0$}\\
   \hline
 $\eta$&&Newton&TFP&ADDA&SDA&ADDAn&SDAn&DAn&pADDA&pSDA&pADDAn&pSDAn&pDAn\\
   \hline
   \multirow{2}{*}{-20}&IT&3&4&8&10&7&7&7&8&10&7&7&7\\
   &CPU&58.4497&73.0989&2.3950&2.8773&2.0640&2.0422&2.0788&2.5258&3.0417&2.2458&2.1973&2.2419\\
   \hline
   \multirow{2}{*}{-10}&IT&3&5&7&8&6&6&6&7&8&6&6&6\\
   &CPU&56.6389&84.5592&2.1478&2.3578&1.8240&1.8499& 1.8526&2.1427&2.4126&1.8741&1.9059&1.8623\\
   \hline
   \multirow{2}{*}{-5}&IT&3&6&5&7&5&6&6&5&7&5&6&6\\
   &CPU&57.6388&98.7460&1.6236&2.1024&1.5899&1.8365&1.8188&1.7059&2.2481&1.7125&2.0061&1.9677\\
   \hline
   \multirow{2}{*}{0}&IT&4&10&4&4&4&4&4&4&4&4&4&4\\
   &CPU&69.9680&145.0492&0.8615&0.8934&0.8588&0.8539&0.8622&0.8428&0.8957&0.8889&0.8864&0.9051\\
   \hline
   \multirow{2}{*}{5}&IT&3&6&5&7&5&6&6&5&7&5&6&6\\
   &CPU&57.7678&96.9707&1.6153&2.0731&1.5593&1.8183&1.8211&1.7015&2.2595&1.6744&1.9892&1.9588\\
   \hline
   \multirow{2}{*}{10}&IT&3&5&7&8&6&6&6&7&8&6&6&6\\
   &CPU&57.8238&84.4355&2.1602&2.3556&1.8236&1.8634&1.8367&2.1626&2.4128&1.8675&1.9107&1.8655\\
   \hline
   \multirow{2}{*}{20}&IT&3&4&8&10&7&7&7&8&10&7&7&7\\
   &CPU&57.4388&71.1231&2.4200&2.8483&2.0967&2.0673&2.1081&2.5161&3.0977&2.2476&2.2522&2.2352\\
   \hline
   \end{tabular}
   \begin{tabular}{c|c|cccccccccccc}
   \hline
   &&&\multicolumn{10}{c}{$\omega=0.1$}\\
   \hline
 $\eta$&&Newton&TFP&ADDA&SDA&ADDAn&SDAn&DAn&pADDA&pSDA&pADDAn&pSDAn&pDAn\\
   \hline
   \multirow{2}{*}{-8}&IT&3&5&8&13&7&8&7&6&8&6&6&6\\
   &CPU&57.1384&85.7310&2.3625&3.5691&2.1001&2.3310&2.1114&1.9961&2.5491&1.9967&1.9912&1.9514\\
   \hline
   \multirow{2}{*}{-4}&IT&3&6&6&7&5&6&6&5&6&5&5&5\\
   &CPU&58.2886&94.0871&1.8640&2.0755&1.5566&1.8467&1.8080&1.6114&1.8708&1.6243&1.6179&1.5984\\
   \hline
   \multirow{2}{*}{-1}&IT&4&9&4&4&4&4&4&4&4&4&4&4\\
   &CPU&70.4656&133.0604&1.3229&1.3265&1.3301&1.3124&1.2978&1.3424&1.3510&1.3502&1.3595&1.3298\\
   \hline
   \multirow{2}{*}{0}&IT&4&10&4&4&4&4&4&4&4&4&4&4\\
   &CPU&70.5027&147.2159&1.3492&1.3400&1.3215&1.3341&1.3284&1.4362&1.4414&1.4462&1.4307&1.4174\\
   \hline
   \multirow{2}{*}{1}&IT&4&9&4&4&4&4&4&4&4&4&4&4\\
   &CPU&69.6891&133.1174&1.3567&1.3174&1.3221&1.3221&1.2696&1.3460&1.8313&1.3482&1.3476&1.3328\\
   \hline
   \multirow{2}{*}{4}&IT&3&6&6&7&5&6&6&5&6&5&5&5\\
   &CPU&57.0302&95.8556&1.8563&2.0993&1.5799&1.8120&1.8292&1.6144&1.8739&1.6128&1.6306&1.5934\\
   \hline
   \multirow{2}{*}{8}&IT&3&5&8&13&7&8&7&6&8&6&6&6\\
   &CPU&57.7937&84.3933&2.3935&3.5873&2.0995&2.3519&2.1214&1.9627&2.5571&1.9775&1.9913&1.9290\\
   \hline
   \end{tabular}
   \begin{tabular}{c|c|cccccccccccc}
   \hline
   &&&\multicolumn{10}{c}{$\omega=0.5$}\\
   \hline
   $\eta$&&Newton&TFP&ADDA&SDA&ADDAn&SDAn&DAn&pADDA&pSDA&pADDAn&pSDAn&pDAn\\
   \hline
   \multirow{2}{*}{-0.45}&IT&4&9&6&8&5&6&6&4&3&4&3&3\\
   &CPU&70.7762&133.8419&1.8694&2.3648&1.5755&1.8512&1.8348&1.4298&1.2643&1.4479&1.1447&1.1438\\
   \hline
   \multirow{2}{*}{-0.3}&IT&4&9&6&6&5&5&5&3&3&4&3&3\\
   &CPU&70.0761&134.2049&1.8255&1.8351&1.5679&1.5860&1.6046&1.1846&1.1592&1.5908&1.1601&1.1384\\
   \hline
   \multirow{2}{*}{-0.15}&IT&4&9&5&5&5&5&5&3&3&4&3&3\\
   &CPU&69.2073&133.8227&1.5746&1.5703&1.6081&1.5855&1.6146&1.1578&1.1762&1.4354&1.1664&1.1516\\
   \hline
   \multirow{2}{*}{0}&IT&4&10&5&5&4&4&4&3&3&4&3&3\\
   &CPU&69.7512&145.1118&1.6052&1.5625&1.3468&1.3138&1.3153&1.1592&1.3629&1.4478&1.1765&1.1472\\
   \hline
   \multirow{2}{*}{0.15}&IT&4&9&5&5&5&5&5&3&3&4&3&3\\
   &CPU&70.5676&135.1130&1.6306&1.5829&1.5304&1.6044&1.6004&1.1546& 1.1481&1.4354&1.1637&1.1447\\
   \hline
   \multirow{2}{*}{0.3}&IT&4&9&6&6&5&5&5&3&3&4&3&3\\
   &CPU&70.9305&131.3888&1.8863&1.7906&1.5782&1.5405&1.5636&1.1681&1.1545&1.4178&1.1809&1.1561\\
   \hline
   \multirow{2}{*}{0.45}&IT&4&9&6&8&5&6&6&4&3&4&3&3\\
   &CPU&71.2917&130.1491&1.8720&2.3765&1.5757&1.8181&1.8227&1.4440&1.0912&1.4321&1.1646&1.1388\\
   \hline
   \end{tabular}
   \begin{tabular}{c|c|cccccccccccc}
   \hline
   &&&\multicolumn{10}{c}{$\omega=0.9$}\\
   \hline
 $\eta$&&Newton&TFP&ADDA&SDA&ADDAn&SDAn&DAn&pADDA&pSDA&pADDAn&pSDAn&pDAn\\
   \hline
   \multirow{2}{*}{-0.35}&IT&4&9&9&9&15&9&9&4&4&4&4&4\\
   &CPU&74.9940&134.6261&2.6350&2.6163&4.1425&2.5889&2.5731&1.4688&1.4320&1.4589&1.4341&1.4459\\
   \hline
   \multirow{2}{*}{-0.2}&IT&4&9&10&10&14&10&10&4&4&4&4&4\\
   &CPU&71.3762&132.5856&2.9131&2.8304&3.7992&2.8453&2.8254&1.4619&1.4413&1.4515&1.4455&1.4145\\
   \hline
   \multirow{2}{*}{-0.05}&IT&4&10&8&8&11&8&8&4&4&4&4&4\\
   &CPU&72.0856&144.8090&2.3739&2.3684&3.0708&2.3756&2.3361&1.4179&1.4491&1.4141&1.4436&1.4298\\
   \hline
   \multirow{2}{*}{0}&IT&4&10&8&8&10&8&8&4&4&4&4&4\\
   &CPU&71.9337&144.5986&2.3964&2.3432&2.8074&2.3352&2.3504&1.4293&1.4401&1.4586&1.4487&1.4476\\
   \hline
   \multirow{2}{*}{0.05}&IT&4&10&8&8&11&8&8&4&4&4&4&4\\
   &CPU&72.1193&142.7633&2.3727&2.3463&3.0807&2.3879&2.3307&1.4140&1.4460&1.4291&1.4217&1.4300\\
   \hline
   \multirow{2}{*}{0.2}&IT&4&9&10&10&14&10&10&4&4&4&4&4\\
   &CPU&72.8750&133.7942&2.9122&2.8388&3.8541&2.8803&2.8312&1.4597&1.4336&1.4463&1.4438&1.4127\\
   \hline
   \multirow{2}{*}{0.35}&IT&4&9&9&9&15&9&9&4&4&4&4&4\\
   &CPU&71.9151&133.6850&2.6592&2.5762&4.1201&2.5861&2.5914&1.3834&1.4862&1.4082&1.4407&1.4544\\
   \hline
   \end{tabular}
   }
\end{table}

For Example 7.2, we test the performance of our methods for different $\omega$ and $\eta$. The numerical results are shown in Table 7.2. We can see that Newton's method converges fastest among all proposed methods, but it consumes too much time. Among all doubling algorithms, pADDA converges faster than ADDA and pSDA converges faster than SDA; ADDAn, SDAn and DAn converge faster than ADDA and SDA most of time; pADDAn, pSDAn and pDAn convege faster than ADDA, SDA, ADDAn, SDAn and DAn most of time. DAn chooses the faster method between ADDAn and SDAn, and pDAn chooses the faster method between pADDAn and pSDAn  most of time. We must note that pADDAn may converge slower than pADDA. We observe that pADDA, pSDA, pADDAn, pSDAn and pDAn converge fastest among all doubling algorithms for our example as a whole. Abnormal situations may be explained from the fact that the smaller parameters may not result in faster algorithms for doubling algorithms. From the results, we can observe that the computational work of the bisection procedure is negligible compared to that of the doubling algorithms in the preprocessing procedure.

\section{Conclusions}

In this paper, based on a new parameterized definition of the comparison matrix of a given complex matrix, we propose a new class of complex nonsymmetric algebraic Riccati equations (NAREs) which extends the class of nonsymmetric algebraic Riccati equations proposed by \cite{Axe1}. We also generalize the definition of the extremal solution of the NARE and show that the extremal solution exists and is unique. Besides, we show that Newton's method for solving the NARE is quadratically convergent and the fixed-point iterative methods are linearly convergent. We also give some concrete parameters selection strategies such that the doubling algorithms, including ADDA and SDA, can be used to deliver the extremal solution, and show that the two doubling algorithms with suitable parameters are quadratically convergent. Furthermore, some invariants of the doubling algorithms are also analyzed.

However we fail applying the structure-preserving doubling algorithm with shrink-shift (SDA-ss) to solve the NAREs proposed by us. We will be devoted to choosing suitable parameters such that SDA can be applied to solve the NAREs.

\section*{Acknowledgements}

The work was supported by National Natural Science Foundation of China (11671318).

\end{document}